\renewcommand{\epsilon}{\varepsilon}
\newcommand{\dd}{\, \mathrm{d}}
\renewcommand{\P}{\mathbb{P}}
\newcommand{\PP}{\mathbb{P}}
\newcommand{\cA}{\mathcal{A}}
\newcommand{\cB}{\mathcal{B}}
\newcommand{\cC}{\mathcal{C}}
\newcommand{\cD}{\mathcal{D}}
\newcommand{\cF}{\mathcal{F}}
\newcommand{\cG}{\mathcal{G}}
\newcommand{\cH}{\mathcal{H}}
\newcommand{\cM}{\mathcal{M}}
\newcommand{\cS}{\mathcal{S}}
\newcommand{\fc}{\mathfrak{c}}
\newcommand{\sA}{\mathscr{A}}
\newcommand{\tymchange}[1]{{#1}}
\begin{document}
\newcommand*{\TitleFont}{%
      \usefont{\encodingdefault}{\rmdefault}{b}{n}%
      \fontsize{16}{20}%
      \selectfont}

\title{\vspace{-0.3in} An extension of the stochastic sewing lemma and applications to fractional stochastic calculus}

\author{Toyomu Matsuda\thanks{EPFL, Bâtiment MA, 1015 Lausanne, Switzerland,
\href{mailto:toyomu.matsuda@epfl.ch}{toyomu.matsuda@epfl.ch}}
\and Nicolas Perkowski\thanks{FU Berlin, Arnimallee 7, 14195 Berlin, Germany,
\href{mailto:perkowski@math.fu-berlin.de}{perkowski@math.fu-berlin.de}}
}

\date{}
\maketitle

\begin{abstract}
We give an extension of Lê's stochastic sewing lemma [\emph{Electron. J. Probab.} 25: 1 - 55, 2020]. The stochastic sewing lemma proves convergence in $L_m$ of Riemann type sums $\sum_{[s,t] \in \pi} A_{s,t}$ for an adapted two-parameter stochastic process $A$, under certain conditions on the moments of $A_{s,t}$ and of conditional expectations of $A_{s,t}$ given $\mathcal F_s$.
Our extension replaces the conditional expectation given $\mathcal F_s$ by that given $\mathcal F_v$ for $v<s$, and it allows to make use of asymptotic decorrelation properties between $A_{s,t}$ and $\mathcal F_v$ by including a singularity in $(s-v)$.
We provide three applications for which Lê's stochastic sewing lemma seems to be insufficient.
The first is to prove the convergence of It\^o or Stratonovich approximations of
stochastic integrals along fractional Brownian motions under low regularity assumptions. The second is to obtain new representations of
local times of fractional Brownian motions via discretization.
The third is to improve a regularity assumption on the diffusion coefficient of a stochastic differential equation driven by a fractional Brownian motion
for pathwise uniqueness and strong existence.
\bigskip

\noindent
\emph{Keywords and phrases.}
stochastic sewing lemma, fractional Brownian motion, stochastic integrals, local time, stochastic differential equations.

\noindent
\emph{MSC 2020}. 60G22, 60H05, 60H10, 60J55

\end{abstract}
\tableofcontents

\section{Introduction and the main theorem}
In analysis and probability theory, we often consider the convergence of sums
\begin{equation}\label{eq:riemann_sum}
  \sum_{[s, t] \in \pi} A_{s, t}.
\end{equation}
Here $\pi$ is a partition of an interval $[0, T]$ and we consider the limit of
\begin{equation*}
  \abs{\pi} \defby \max_{[s, t] \in \pi} \abs{t - s} \to 0.
\end{equation*}
For instance, if $A_{s, t} \defby f(s) (t - s)$, then we consider a Riemann sum approximation of $\int_0^T f(s) \dd s$,
and if $A_{s, t} \defby X_s (W_t - W_s)$, where $W$ is a Brownian motion and $X$ is an adapted process,
then we consider the Itô approximation of the stochastic integral $\int_0^T X_r \dd W_r$.

Gubinelli~\cite{gubinelli04}, {inspired by Lyons' results on almost multiplicative functionals in the theory of rough paths~\cite{lyons1998},} showed that if
\begin{equation}\label{eq:delta_A_s_u_t}
  \delta A_{s, u, t} \defby A_{s, t} - A_{s, u} - A_{u, t}, \quad 0\leq s < u < t \leq T,
\end{equation}
satisfies $\abs{\delta A_{s, u, t}} \lesssim \abs{t - s}^{1 + \epsilon}$ for some $\epsilon > 0$, then
the sums \eqref{eq:riemann_sum} converge. This result is now called the \emph{sewing lemma}, named so in the work
of Feyel and de La Pradelle~\cite{feyel06}. This lemma is so powerful that many applications and many extensions are known.
For instance, it can be used to define rough integrals, see \cite{gubinelli04} and the monograph \cite{friz2020} of Friz and Hairer.

When $(A_{s, t})_{s \leq t}$ is random and when we want to prove the convergence of the sums \eqref{eq:riemann_sum},
the above sewing lemma is often not sufficient. For instance, if $A_{s, t} \defby (W_t - W_s)^2$, the sums converge to the
quadratic variation of the Brownian motion. However, we only have
\begin{equation*}
  \abs{\delta A_{s, u, t}(\omega)} \lesssim_{\epsilon, \omega}
\abs{t-s}^{1 - \epsilon}
\end{equation*}
 almost surely
for every $\epsilon > 0$, and hence we cannot apply the sewing lemma.

Lê~\cite{khoa20} proved a stochastic version of the sewing lemma (\emph{stochastic sewing lemma}):
if a filtration $(\cF_t)_{t \in [0, T]}$ is given
such that
\begin{itemize}
  \item $A_{s, t}$ is $\cF_t$-measurable and
  \item for some $\epsilon_1, \epsilon_2 > 0$ and $m \in [2, \infty)$, we have for every $s < u < t$,
  \begin{equation}\label{eq:le_conditional}
    \norm{\expect[\delta A_{s, u, t} \vert \cF_s]}_{L_m(\P)} \lesssim \abs{t - s}^{1 + \epsilon_2},
  \end{equation}
  \begin{equation}\label{eq:le_one_half}
    \norm{\delta A_{s, u, t}}_{L_m(\P)} \lesssim \abs{t - s}^{\frac{1}{2} + \epsilon_1},
  \end{equation}
\end{itemize}
then the sums \eqref{eq:riemann_sum} converge in $L_m(\P)$. 
\tymchange{As usual, the Banach space $L_m(\mathbb{P})$ is equipped with the norm 
\begin{align*}
  \norm{X}_{L_m(\mathbb{P})} \defby \Big( \int_{\Omega} \abs{X}^m \dd \mathbb{P} \Big)^{\frac{1}{m}}.
\end{align*}
}
If $A_{s, t} \defby (W_t - W_s)^2$, then we have $\expect[\delta A_{s, u, t} \vert \cF_s] = 0$ and
\eqref{eq:le_one_half} is satisfied with $\epsilon_1 = \frac{1}{2}$.
Therefore, we can prove the convergence of \eqref{eq:riemann_sum} in $L_m(\P)$.
The stochastic sewing lemma has been already shown to be very powerful in the original work \cite{khoa20} of Lê,
and an increasing number of papers are appearing that take advantage of the lemma.

However, there are situations where Lê's stochastic sewing lemma seems insufficient.
For instance, consider
\begin{equation}\label{eq:germ_fbm}
  A_{s, t} \defby \abs{B_t - B_s}^{\frac{1}{H}},
\end{equation}
where $B$ is a fractional Brownian motion
with Hurst parameter $H \in (0, 1)$. It is well known that the sums \eqref{eq:riemann_sum} converge to
$c_H T$ in $L_m(\P)$. Although we have the estimate \eqref{eq:le_one_half}, we fail to obtain the estimate
\eqref{eq:le_conditional} unless $H = \frac{1}{2}$.

To get an idea on how Lê's stochastic sewing lemma should be modified for this problem,
observe the following trivial fact:
\begin{equation*}
  \expect[\delta A_{s, u, t}] = 0.
\end{equation*}
This suggests that we consider estimates that interpolate $\expect[\delta A_{s, u, t}]$ and
$\expect[\delta A_{s, u, t} \vert \cF_s]$. In fact, we can obtain the following estimates:
\begin{equation}\label{eq:fbm_p_variation_conditional}
  \norm{\expect[\delta A_{s, u, t} \vert \cF_v]}_{L_m(\P)}
  \lesssim_{H} \Big( \frac{t-s}{s-v} \Big)^{1-H} (t - s),
  \quad 0 \leq v < s < u < t \leq T.
\end{equation}
We can prove \eqref{eq:fbm_p_variation_conditional} for instance by applying Picard's result \cite[Lemma~A.1]{picard08} on the asymptotic independence of fractional Brownian increments,
or more directly by doing a similar calculation as in Section~\ref{sec:local_time}.
This discussion motivates the following main theorem of our paper.
\begin{theorem}\label{thm:generalized_stochastic_sewing}
  Suppose that we have a filtration $(\cF_t)_{t \in [0, T]}$ and a family of $\R^d$-valued random variables
  $(\sewing_{s, t})_{0 \leq s \leq t \leq T}$ such that
  $\sewing_{s, s} = 0$ for every $s \in [0, T]$ and such that $\sewing_{s, t}$ is $\cF_t$-measurable.
  We define $\delta A_{s, u, t}$ by \eqref{eq:delta_A_s_u_t}.
  Furthermore, suppose that there exist constants
  \begin{equation*}
    m \in [2, \infty), \quad \Gamma_1, \Gamma_2, M \in [0, \infty),
       \quad \alpha, \beta_1, \beta_2 \in [0, \infty)
  \end{equation*}
  such that the following conditions are satisfied.
  \begin{itemize}
    \item For every  $0 \leq t_0 < t_1 < t_2 < t_3 \leq T$, we have
    \begin{align}
      \label{eq:sewing_regularization}
      \norm{\expect[\delta \sewing_{t_1, t_2, t_3} \vert \cF_{t_0}]}_{L_m(\P)} &\leq \Gamma_1
      (t_1 - t_0)^{-\alpha} (t_3 - t_1)^{\beta_1}, \quad \text{if } M(t_3 - t_1) \leq t_1 - t_0, \\
      \label{eq:sewing_one_half}
      \norm{\delta \sewing_{t_0, t_1, t_2}}_{L_m(\P)}
      &\leq \Gamma_2  (t_2 - t_0)^{\beta_2}.
    \end{align}
    \item We have
    \begin{equation}\label{eq:alpha_and_beta}
      \beta_1  > 1, \quad \beta_2  > \frac{1}{2},
      \quad \beta_1 - \alpha > \frac{1}{2}.
    \end{equation}
  \end{itemize}
  Then, there exists a unique, up to modifications, $\R^d$-valued stochastic process $(\cA_t)_{t \in [0, T]}$ with
  the following properties.
  \begin{itemize}
    \item $\cA_0 = 0$, $\cA_t$ is $\cF_t$-measurable and $\cA_t$ belongs to $L_m(\P)$.
    \item There exist non-negative constants $C_1$, $C_2$ and $C_3$ such that
    \begin{equation}\label{eq:curl_A_conditional}
      \norm{\expect[\cA_{t_2} - \cA_{t_1} - A_{t_1, t_2} \vert \cF_{t_0}]}_{L_m(\P)}
      \leq C_1 \abs{t_1 - t_0}^{-\alpha} \abs{t_2 - t_1}^{\beta_1} ,
    \end{equation}
    \begin{equation}\label{eq:curl_A_one_half}
      \norm{\cA_{t_2} - \cA_{t_1} - A_{t_1, t_2} }_{L_m(\P)} \leq  C_2 \abs{t_2 - t_1}^{\beta_1 - \alpha}
      + C_3 \abs{t_2 - t_1}^{\beta_2},
    \end{equation}
    where $t_2 - t_1 \leq M^{-1}(t_1 - t_0)$ is assumed for the inequality \eqref{eq:curl_A_conditional}.
  \end{itemize}
  In fact, we can choose $C_1$, $C_2$ and $C_3$ so that
  \begin{equation*}
    C_1 \lesssim_{\beta_1} \Gamma_1, \quad C_2 \lesssim_{\alpha, \beta_1, \beta_2, M}  \kappa_{m, d} \Gamma_1, \quad
        C_3 \lesssim_{\alpha, \beta_1, \beta_2, M} \kappa_{m, d} \Gamma_2,
  \end{equation*}
  where $\kappa_{m,d}$ is the constant of the Burkholder-Davis-Gundy inequality, see \eqref{eq:BDG}.
  Furthermore, for $\tau \in [0, T]$, if we set
  \begin{equation*}
    \sewing^{\pi}_{\tau} \defby \sum_{[s, t] \in \pi} \sewing_{s, t}, \quad \text{where $\pi$ is a partition of $[0, \tau]$},
  \end{equation*}
  then the family $(A^{\pi}_{\tau})_{\pi}$ converges to $\cA_{\tau}$ in $L_m(\P)$ as
  $\abs{\pi}$ tends to $0$.
\end{theorem}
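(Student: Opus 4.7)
The plan is to construct $\cA_\tau$ as the $L_m(\P)$-limit of the dyadic Riemann sums $A^n_\tau := \sum_{[s,t] \in \pi^n_\tau} A_{s,t}$, where $\pi^n_\tau$ denotes the dyadic partition of $[0,\tau]$ into subintervals of length $\tau/2^n$. Following L\^e's strategy, the Cauchy property reduces to estimating the bisection difference
\begin{equation*}
  A^{n+1}_\tau - A^n_\tau = -\sum_{[s,t] \in \pi^n_\tau} \delta A_{s, (s+t)/2, t}
\end{equation*}
in $L_m$ by a quantity summable in $n$. The novelty compared to L\^e's proof lies in an iterative centering tailored to the singular factor $(t_1 - t_0)^{-\alpha}$ in \eqref{eq:sewing_regularization}.

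For each interval $[s,t] \in \pi^n_\tau$ with $s \geq M(t-s)$, I set $v(s,t) := s - M(t-s)$ and decompose $\delta A_{s,(s+t)/2,t} = E_{s,t} + N_{s,t}$, where $E_{s,t} := \expect[\delta A_{s,(s+t)/2,t} \vert \cF_{v(s,t)}]$ is the drift and $N_{s,t}$ is the centered residual. For the centered sum, I split the intervals into $\lceil M \rceil + 1$ subfamilies spaced so that $t_i \leq v(s_{i+1}, t_{i+1})$; within each subfamily $(N_{s_i, t_i})_i$ is a martingale difference sequence, and the Burkholder-Davis-Gundy inequality combined with $\|N_{s,t}\|_{L_m} \lesssim \Gamma_2 (t-s)^{\beta_2} + \Gamma_1 M^{-\alpha}(t-s)^{\beta_1 - \alpha}$ yields a contribution of order $\kappa_{m,d}(\Gamma_2\, 2^{-n(\beta_2 - 1/2)} + \Gamma_1 M^{-\alpha}\, 2^{-n(\beta_1 - \alpha - 1/2)})$, summable in $n$ by $\beta_2 > 1/2$ and $\beta_1 - \alpha > 1/2$. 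The $O(M)$ boundary intervals with $s < M(t-s)$ are handled separately via \eqref{eq:sewing_one_half}.

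The main obstacle is controlling the drift sum $\sum_i E_{s_i, t_i}$, since a naive triangle-inequality bound gives only $\Gamma_1 M^{-\alpha}\, 2^{-n(\beta_1 - \alpha - 1)}$ and thus would demand the strictly stronger hypothesis $\beta_1 - \alpha > 1$. To recover the missing half-exponent I iterate the centering: setting $v_k(s,t) := \max(0, s - 2^k M(t-s))$, I telescope
\begin{equation*}
  E_{s,t} = \sum_{k=0}^{K-1} \bigl( \expect[\delta A \vert \cF_{v_k(s,t)}] - \expect[\delta A \vert \cF_{v_{k+1}(s,t)}] \bigr) + \expect[\delta A \vert \cF_0],
\end{equation*}
where $K$ is the first index with $v_K = 0$. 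The $k$-th increment $F^k_{s,t}$ is $\cF_{v_k(s,t)}$-measurable and centered given $\cF_{v_{k+1}(s,t)}$; a direct check shows that $v_k(s_{i-1}) \leq v_{k+1}(s_i)$ holds automatically for consecutive dyadic intervals, so $(F^k_{s_i, t_i})_i$ is a martingale difference sequence with respect to $(\cF_{v_{k+1}(s_i)})_i$ without any further grouping, and BDG yields $\|\sum_i F^k_{s_i, t_i}\|_{L_m} \lesssim \kappa_{m,d} \Gamma_1 (2^k M)^{-\alpha}\, 2^{-n(\beta_1 - \alpha - 1/2)}$. Summing the geometric series in $k \geq 0$ and then in $n$ gives convergence under $\beta_1 - \alpha > 1/2$. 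The $\cF_0$-measurable remainder $\expect[\delta A \vert \cF_0]$ is summed by triangle inequality, and $\sum_i s_i^{-\alpha} (2^{-n})^{\beta_1} \lesssim 2^{-n(\beta_1 - 1)}$ is summable precisely under the threshold $\beta_1 > 1$.

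Given the $L_m$-limit $\cA_\tau$, the germ bounds \eqref{eq:curl_A_conditional} and \eqref{eq:curl_A_one_half} are obtained by applying the same analysis to the infinite telescoping identity $\cA_{t_2} - \cA_{t_1} - A_{t_1, t_2} = -\sum_{k \geq 0} \sum_{[s,t] \in \pi^k_{[t_1, t_2]}} \delta A_{s, (s+t)/2, t}$: for \eqref{eq:curl_A_conditional}, fixing the conditioning time $v = t_0$ throughout makes $\sum (s - t_0)^{-\alpha}(t-s)^{\beta_1}$ summable and produces the factorized bound $|t_1 - t_0|^{-\alpha}|t_2 - t_1|^{\beta_1}$ under $\beta_1 > 1$, while for \eqref{eq:curl_A_one_half} the local BDG analysis from the preceding paragraph yields the two exponents $\beta_1 - \alpha$ and $\beta_2$. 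Uniqueness follows by applying these bounds to the difference of two candidate processes, and $L_m$-convergence of $A^\pi_\tau$ for arbitrary partitions with $|\pi| \to 0$ follows by comparing $A^\pi_\tau$ with the dyadic refinement of $\pi$ and invoking \eqref{eq:curl_A_one_half} together with one more BDG summation on the resulting germ errors.
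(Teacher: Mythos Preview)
Your overall architecture is close to the paper's: both iterate a centering/grouping step to cope with the singular prefactor $(t_1-t_0)^{-\alpha}$, trading the naive triangle-inequality exponent $\beta_1-\alpha-1$ for the BDG exponent $\beta_1-\alpha-\tfrac12$. However, there is a concrete error in your drift iteration.

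You claim that for each $k$ the increments $F^k_{s_i,t_i}=\expect[\delta A\mid\cF_{v_k(s_i)}]-\expect[\delta A\mid\cF_{v_{k+1}(s_i)}]$ form a martingale difference sequence over \emph{consecutive} dyadic intervals because ``$v_k(s_{i-1})\le v_{k+1}(s_i)$ holds automatically''. It does not. With $h:=t_i-s_i=2^{-n}\tau$ and $s_i-s_{i-1}=h$, the inequality $s_{i-1}-2^kMh\le s_i-2^{k+1}Mh$ is equivalent to $2^kM\le1$, which fails for all $k\ge\log_2(1/M)$ and in particular for every $k\ge0$ whenever $M\ge1$. Thus $(F^k_{s_i,t_i})_i$ is \emph{not} a martingale difference sequence in general, and the BDG step you invoke is unjustified.

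The fix is the same device you correctly used at level $k=0$: at level $k$ split the indices into $L_k:=\lceil 2^kM\rceil+1$ arithmetic subfamilies; within each subfamily the spacing is $L_kh\ge 2^kMh$, so the martingale difference property holds and BDG yields
\[
\Big\|\sum_i F^k_{s_i,t_i}\Big\|_{L_m}\lesssim \kappa_{m,d}\,\Gamma_1\,L_k^{1/2}\,(2^kM)^{-\alpha}\,2^{-n(\beta_1-\alpha-1/2)}\,\tau^{\beta_1-\alpha}
\lesssim \kappa_{m,d}\,\Gamma_1\,(2^kM)^{1/2-\alpha}\,2^{-n(\beta_1-\alpha-1/2)}\,\tau^{\beta_1-\alpha}.
\]
Summing over $k=0,\dots,K$ with $K\lesssim n$ (since $v_K=0$ forces $2^K\gtrsim 2^n/M$), you get a factor $\sum_{k\le n}(2^k)^{1/2-\alpha}$, which is $O(1)$ if $\alpha>\tfrac12$, $O(n)$ if $\alpha=\tfrac12$, and $O(2^{n(1/2-\alpha)})$ if $\alpha<\tfrac12$; in every case the product with $2^{-n(\beta_1-\alpha-1/2)}$ is summable in $n$ under $\beta_1>1$ and $\beta_1-\alpha>\tfrac12$. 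This is precisely the content of the paper's iterated grouping (Lemma~\ref{lem:refine_A_t_0_t_1} in the appendix), which organises the same idea by fixing a block size $L$ and iterating $k$ times to reach blocks of size $L^k$; your geometric scaling $2^kM$ is an equivalent repackaging, once the missing subfamily split is inserted.

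A secondary point: your last paragraph on general partitions and uniqueness is too compressed. ``One more BDG summation'' over $\sum_{[s,t]\in\pi}(\cA_{s,t}-A_{s,t})$ needs the full conditional/unconditional germ machinery again (this is the paper's Lemma~\ref{lem:A_and_A_dash} and Remark~\ref{rem:A_dash_and_curly_A}), not just~\eqref{eq:curl_A_one_half}; the triangle inequality alone would require $\beta_1-\alpha>1$.
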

\tymchange{
\begin{remark}\label{rem:par_optimal}
  We discuss the optimality of the condition \eqref{eq:alpha_and_beta}. By considering the deterministic $(A_{s,t})$, we see that 
  the condition $\beta_1 > 1$ is necessary. To see that the conditions $\beta_2 > \frac{1}{2}$ and $\beta_1 - \alpha > \frac{1}{2}$ are necessary, 
  let $B^1$ and $B^2$ be two independent one-dimensional fractional Brownian motions with Hurst parameter $\frac{1}{4}$ (see Definition~\ref{def:fbm}), and we set 
  $A_{s, t} \defby B^1_s B^2_{s, t}$. It is well-known since the work \cite{coutin02} of Coutin and Qian that the iterated integral $\int B^1 \mathrm{d} B^2$ does not exist, 
  and therefore the Riemann sum with respect to $(A_{s, t})$ should not converge. 
  In fact, the family $(A_{s, t})$, with filtration $(\mathcal{F}_t)$ generated by $(B^1, B^2)$, satisfies \eqref{eq:sewing_regularization} and \eqref{eq:sewing_one_half} with 
  \begin{align*}
    \alpha = \frac{3}{2}, \quad \beta_1 = 2, \quad \beta_2 = \frac{1}{2}.
  \end{align*}
  To see this, we observe $\delta A_{t_1, t_2, t_3} = - B^1_{t_1, t_2} B^2_{t_2, t_3}$, and 
  \begin{align*}
    \norm{\delta A_{t_1, t_2, t_3}}_{L_m(\mathbb{P})} \lesssim_{m} (t_3-t_1)^{\frac{1}{2}}.
  \end{align*}
  To compute the conditional expectation, we observe 
  \begin{align*}
    \mathbb{E}[\delta A_{t_1, t_2, t_3} \vert \mathcal{F}_{t_0}] = - \mathbb{E}[B^1_{t_1, t_2} \vert \mathcal{F}_{t_0}] 
    \mathbb{E}[B^2_{t_2, t_3} \vert \mathcal{F}_{t_0}],
  \end{align*}
  and by the estimate \eqref{eq:estimate_of_Y_s_u} we have 
  \begin{align*}
    \norm{\mathbb{E}[\delta A_{t_1, t_2, t_3} \vert \mathcal{F}_{t_0}]}_{L_m(\mathbb{P})} 
    \lesssim_m (t_1-t_0)^{-\frac{3}{2}} (t_3 - t_1)^2.
  \end{align*}
\end{remark}
}
\begin{remark}
  The proof shows that if
  \begin{equation}\label{eq:alpha_and_beta_technical}
    1 + \alpha - \beta_1 < 2 \alpha  \beta_2 - \alpha,
  \end{equation}
  then we have $C_2 \lesssim_{\alpha, \beta_1, \beta_2, M} \Gamma_1$ and we can omit the factor $\kappa_{m,d}$.
  This is similar to \cite{khoa20}, where $C_2$ also does not depend on $\kappa_{m,d}$.
  If $\alpha = 0$ and $M = 0$,
  Theorem~\ref{thm:generalized_stochastic_sewing} recovers Lê's stochastic sewing lemma \cite[Theorem~2.1]{khoa20}.
  If $\alpha = 0$ and $M >0 $, it recovers a lemma \cite[Lemma~2.2]{Gerencser2022} by Gerencsér.

  \tymchange{Recently, Gerecsér's stochastic sewing lemma is called \emph{shifted} stochastic sewing lemma. 
  In the follow-up works, we continue to refer to Theorem~\ref{thm:generalized_stochastic_sewing} by the same name.
  }
\end{remark}
\begin{remark}\label{rem:convergence_rate}
  The proof shows that there exists $\epsilon = \epsilon(\alpha, \beta_1, \beta_2) > 0$ such that
  \begin{equation*}
    \norm{\cA_{\tau} - A^{\pi}_{\tau}}_{L_m(\P)} \lesssim_{\alpha, \beta_1, \beta_2, M, m, d, T} (\Gamma_1 + \Gamma_2) \abs{\pi}^{\epsilon}
  \end{equation*}
  for every $\tau \in [0, T]$ and every partition $\pi$ of $[0, \tau]$.
  A similar remark holds in the setting of Corollary~\ref{cor:singular_generalized_stochastic_sewing_lemma}.
\end{remark}
\begin{remark}
  As in another work \cite{le21} of Lê, it should be possible to extend
  Theorem~\ref{thm:generalized_stochastic_sewing}
  so that the stochastic process $(A_{s,t})_{s,t \in [0, T]}$ takes values in a certain Banach space.
\end{remark}
\begin{remark}
  A multidimensional version of the sewing lemma is the \emph{reconstruction theorem} \cite[Theorem~3.10]{hairer_theory_2014} of
  Hairer. A stochastic version of the reconstruction theorem was obtained by Kern~\cite{kern2021stochastic}.
  It could be possible to extend Theorem~\ref{thm:generalized_stochastic_sewing} in the multidimensional setting, but we will not pursue it
  in this paper.
\end{remark}
The proof of Theorem~\ref{thm:generalized_stochastic_sewing} is given in Section~\ref{sec:proof}.
If $A_{s, t}$ is given by \eqref{eq:germ_fbm}, then we can apply Theorem~\ref{thm:generalized_stochastic_sewing} with
\begin{equation*}
  \alpha = 1 - H, \quad \beta_1 = 2 - H, \quad \beta_2 = 1.
\end{equation*}
However,
the application of Theorem~\ref{thm:generalized_stochastic_sewing} goes beyond this simple problem of
$\frac{1}{H}$-variation of the fractional Brownian motion.
Indeed, in Section~\ref{sec:stochastic_integral} we prove the convergence of It\^o and Stratonovich approximations to
the stochastic integrals
\begin{equation*}
  \int_0^T f(B_s) \dd B_s \quad \text{and } \quad \int_0^T f(B_s) \circ \dd B_s
\end{equation*}
with $H > \frac{1}{2}$ in It\^o's case and with $H > \frac{1}{6}$ in Stratonovich's case,
under rather general assumptions on the regularity of $f$, in fact $f \in C^2_b(\R^d, \R^d)$ works for all $H > \frac16$.
In Section~\ref{sec:local_time}, we obtain new representations of local times of fractional Brownian motions via discretization.

Finally, we remark that one of the most interesting applications of Lê's stochastic sewing lemma lies in
the phenomenon of \emph{regularization by noise}, see e.g. \cite{khoa20}, Athreya, Butkovsky, Lê and Mytnik~\cite{athreya2021wellposedness},
\cite{Gerencser2022} and Anzeletti, Richard and Tanré~\cite{anzeletti21}.
In these works, they consider the stochastic differential equation (SDE)
\begin{equation}\label{eq:additive_noise}
  \dd X_t = b(X_t) \dd t + \dd Y_t
\end{equation}
with an \emph{additive} noise $Y$, which is often a fractional Brownian motion.
It is interesting that, although in absence of noise the coefficient $b$ needs to belong to $C^1$ for
well-posedness, the presence of noise enables us to prove certain well-posedness of \eqref{eq:additive_noise} under much weaker assumption,
in fact $b$ can be even a distribution; hence the name \emph{regularization by noise}.

In Section~\ref{sec:regularization_by_noise} of our paper, we are interested in a related but different problem.
Indeed, we are interested in improving the regularity of the diffusion coefficient rather than the drift coefficient.
We consider the Young SDE
\begin{equation*}
  \dd X_t = b(X_t) \dd t + \sigma(X_t) \dd B_t
\end{equation*}
driven by a fractional Brownian motion $B$ with Hurst parameter $H \in (\frac{1}{2}, 1)$.
The pathwise theory of Young's differential equation requires that the regularity of $\sigma$ is better than
$1/H$ for uniqueness, and this condition is sharp for general drivers $B$ of the same regularity as the fractional Brownian motion. We will improve this regularity assumption for pathwise uniqueness and strong existence.
Again, a stochastic sewing lemma (Lemma~\ref{lem:stochastic-sewing-tradeoff}), which is a variant of Theorem~\ref{thm:generalized_stochastic_sewing}, will play a key role.
\subsubsection*{Acknowledgment}
\tymchange{The main part of the work was done while TM was a PhD student at Freie Universität Berlin 
under the financial support of the German Science Foundation (DFG) via the IRTG 2544.}
NP gratefully acknowledges funding by DFG through the Heinz Maier-Leibnitz Prize.
\tymchange{We thank Khoa Lê for suggesting a simpler proof of Lemma~\ref{lem:refine_A_t_0_t_1} in Appendix~\ref{app:technical} than the original one.}
TM thanks Henri Altman, Hannes Kern and Helena Kremp for the discussions related to this paper.

\subsubsection*{Notation}
We write $\N_0 \defby \{0, 1, 2, \ldots\}$ and $\N \defby \{1, 2, \ldots\}$.
Given a function $f:[S, T] \to \R^d$, we write $f_{s, t} \defby f_t - f_s$.
We denote by $\kappa_{m, d}$ the best constant
 of the discrete Burkholder-Davis-Gundy (BDG) inequality for
$\R^d$-valued martingale differences \cite{BDG72}.
Namely, if we are given a filtration $(\cF_n)_{n=1}^{\infty}$ and a sequence $(X_n)_{n=1}^{\infty}$
of $\R^d$-valued random variables such that $X_n$ is $\cF_{n}$-measurable for every $n \geq 1$ and
$\expect[X_n \vert \cF_{n-1}] = 0$ for every $n \geq 2$, then
\begin{equation}\label{eq:BDG}
  \norm{\sum_{n=1}^{\infty} X_n}_{L_m(\P)} \leq \kappa_{m, d}
  \norm{\sum_{n=1}^{\infty} X_n^2}_{L_{\frac{m}{2}}(\P)}^{\frac{1}{2}}.
\end{equation}
Rather than \eqref{eq:BDG}, we mostly use the inequality
\begin{equation}\label{eq:BDG-Minkowski}
  \norm{\sum_{n=1}^{\infty} X_n}_{L_m(\P)} \leq \kappa_{m, d}
  \Big(\sum_{n=1}^{\infty} \norm{X_n}_{L_m(\P)}^2 \Big)^{\frac{1}{2}}.
\end{equation}
for $m \geq 2$, which follows from \eqref{eq:BDG} by Minkowski's inequality.
We write $A \lesssim B$ or $A = O(B)$ if there exists a non-negative constant $C$ such that
$A \leq C B$. To emphasize the dependence of $C$ on some parameters $a, b, \ldots$, we write
$A \lesssim_{a, b, \ldots} B$.

\section{Proof of the main theorem}\label{sec:proof}
The overall strategy of the proof is the same as that of the original work \cite{khoa20} of Lê.
Namely, we combine the argument of the deterministic sewing lemma (\cite{gubinelli04}, \cite{feyel06} and
Yaskov~\cite{yaskov18}) with
the discrete BDG inequality \cite{BDG72}.
However, the proof of Theorem~\ref{thm:generalized_stochastic_sewing} requires more labor at technical level.
Some proofs will be postponed to Appendix~\ref{app:technical}.

As in \cite{khoa20}, the following lemma, which originates from \cite{yaskov18},  will be needed. It allows us to replace general partitions by dyadic partitions.
\begin{lemma}[{\cite[Lemma~2.14]{khoa20}}]\label{lem:dyadic_allocation}
  Under the setting of Theorem~\ref{thm:generalized_stochastic_sewing},
  let
  \begin{equation*}
    0 \leq  t_0 < t_1 < \cdots < t_{N-1} < t_N \leq T.
  \end{equation*}
  Then, we have
  \begin{equation}\label{eq:A_and_R_n_i}
    A_{t_0, t_N} - \sum_{i=1}^N A_{t_{i-1}, t_i} = \sum_{n \in \N_0} \sum_{i=0}^{2^n - 1}
    R^n_i,
  \end{equation}
  where
  \begin{equation}\label{eq:expression_of_R_n_i}
    R^n_i \defby \delta A_{s^{n, i}_1, s^{n, i}_2, s^{n, i}_3} + \delta A_{s^{n, i}_1, s^{n, i}_3, s^{n, i}_4},
  \end{equation}
  and
  \begin{equation*}
    n \in \N_0, \quad i \in \{0, 1, \ldots, 2^n - 1\},
    \quad s^{n, i}_j \in [t_0 + \frac{i(t_N - t_0)}{2^n}, t_0 + \frac{(i+1)(t_N - t_0)}{2^n}],
  \end{equation*}
  and where $R^n_i = 0$ for all sufficiently large $n$.
\end{lemma}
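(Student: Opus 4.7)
The plan is to follow Yaskov's dyadic-telescoping strategy, which is the approach taken in \cite{yaskov18} and adapted in \cite[Lemma~2.14]{khoa20}. Set $I \defby t_N - t_0$ and, for each level $n \in \N_0$, consider the dyadic intervals $J^n_i \defby [t_0 + iI/2^n, t_0 + (i+1)I/2^n]$ for $i = 0, \ldots, 2^n - 1$. Starting from $\pi \defby \{t_0, t_1, \ldots, t_N\}$, I build a nested sequence of sub-partitions $\pi^{(0)} \subseteq \pi^{(1)} \subseteq \cdots \subseteq \pi$ by picking, at level $n$, a canonical representative (e.g.\ the leftmost $t_j$) inside each dyadic interval $J^n_i$ that meets $\pi$, together with $t_0$ and $t_N$. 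Once $n$ is large enough that $I/2^n$ is smaller than the minimal gap of $\pi$, each $J^n_i$ contains at most one $t_j$, so $\pi^{(n)}$ stabilizes to $\pi$.

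Writing $S_n \defby \sum_{[s,t] \in \pi^{(n)}} A_{s,t}$, we have $S_0 = A_{t_0, t_N}$ and $S_n = \sum_{j=1}^N A_{t_{j-1}, t_j}$ for all sufficiently large $n$, so the left-hand side of \eqref{eq:A_and_R_n_i} equals the \emph{finite} telescoping sum $\sum_{n \in \N_0} (S_n - S_{n+1})$. The heart of the argument is to localize the increment $S_n - S_{n+1}$ to each dyadic interval: refining from level $n$ to $n+1$ inside $J^n_i$ inserts at most two new representatives, coming from the halves $J^{n+1}_{2i}$ and $J^{n+1}_{2i+1}$. Thus the $i$-th contribution involves at most four ordered points $s^{n,i}_1 \leq s^{n,i}_2 \leq s^{n,i}_3 \leq s^{n,i}_4$ lying in $J^n_i$, and a direct expansion yields
\begin{equation*}
A_{s^{n,i}_1, s^{n,i}_4} - A_{s^{n,i}_1, s^{n,i}_2} - A_{s^{n,i}_2, s^{n,i}_3} - A_{s^{n,i}_3, s^{n,i}_4} = \delta A_{s^{n,i}_1, s^{n,i}_2, s^{n,i}_3} + \delta A_{s^{n,i}_1, s^{n,i}_3, s^{n,i}_4},
\end{equation*}
which is precisely $R^n_i$. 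Degenerate configurations (fewer than four distinct points in $J^n_i$) make $R^n_i = 0$, consistent with the claim that $R^n_i$ vanishes for all large $n$.

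The main obstacle is purely combinatorial bookkeeping: the selector convention must be chosen so that neighbouring representatives $\sigma^n_i$ and $\sigma^n_{i+1}$ are compatible with the level-$(n+1)$ representatives in the adjacent halves, preventing any ``leakage'' across dyadic boundaries that would otherwise produce $\delta A$ terms straddling two different $J^n_i$. A systematic case analysis of the possible refinement patterns (including when $J^n_i$, $J^{n+1}_{2i}$ or $J^{n+1}_{2i+1}$ contains no $\pi$-point) forces the local contribution into the prescribed shape with exactly two $\delta A$'s per interval. Termination of the sum in $n$ is automatic from the stabilization of $\pi^{(n)}$; the containment $s^{n,i}_j \in J^n_i$ holds by construction.
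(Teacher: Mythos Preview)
The paper does not give its own proof of this lemma; it simply cites \cite[Lemma~2.14]{khoa20}, which in turn follows Yaskov~\cite{yaskov18}. Your overall strategy---building a nested sequence of coarsenings $\pi^{(n)}$ of $\pi$ indexed by dyadic levels, telescoping $A_{t_0,t_N}-\sum_j A_{t_{j-1},t_j}=\sum_n(S_n-S_{n+1})$, and localizing each increment to the dyadic cells $J^n_i$---is exactly the Yaskov/L\^e approach, and the algebraic identity $A_{s_1,s_4}-A_{s_1,s_2}-A_{s_2,s_3}-A_{s_3,s_4}=\delta A_{s_1,s_2,s_3}+\delta A_{s_1,s_3,s_4}$ is used precisely as you say.

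There is, however, a concrete gap in the selector convention you propose. Taking \emph{one} representative per cell (``e.g.\ the leftmost $t_j$'') does not produce four points in $J^n_i$: with that convention the level-$n$ partition interval has its right endpoint in the next nonempty cell $J^n_{i'}$, so the resulting $\delta A$'s straddle cell boundaries---exactly the ``leakage'' you flag as the main obstacle but do not resolve. The fix used in \cite{khoa20} is to record, for each nonempty $J^n_i$, \emph{both} the minimal and maximal points $a^n_i\defby\min(\pi\cap J^n_i)$ and $b^n_i\defby\max(\pi\cap J^n_i)$. Then the ``boundary'' intervals $[b^n_i,a^n_{i'}]$ are already intervals of $\pi$ itself and cancel in $S_n-S_{n+1}$, while inside $J^n_i$ one passes from $[a^n_i,b^n_i]$ to the three subintervals with endpoints
\[
s^{n,i}_1=a^n_i=a^{n+1}_{2i},\quad s^{n,i}_2=b^{n+1}_{2i},\quad s^{n,i}_3=a^{n+1}_{2i+1},\quad s^{n,i}_4=b^n_i=b^{n+1}_{2i+1},
\]
all lying in $J^n_i$, which yields exactly \eqref{eq:expression_of_R_n_i}. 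Degenerate cases (one half of $J^n_i$ empty, or $J^n_i$ containing at most one $t_j$) collapse some of these points and make $R^n_i=0$. Once you adopt this two-endpoint convention, no further case analysis is needed and your argument goes through.
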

The next two lemmas (Lemma~\ref{lem:refine_A_t_0_t_1} and Lemma~\ref{lem:refine_A_t_0_t_1_conditioning}) correspond to the estimates
\cite[(2.50) and (2.51)]{khoa20} respectively.

\begin{lemma}\label{lem:refine_A_t_0_t_1_conditioning}
  Under the setting of Theorem~\ref{thm:generalized_stochastic_sewing},
  let
  \begin{equation*}
    0 \leq s < t_0 < t_1 < \cdots < t_{N-1} < t_N \leq T,  \quad t_N - t_1 \leq \frac{t_0 - s}{M}.
  \end{equation*}
  Then,
  \begin{equation*}
    \norm{\expect[A_{t_0, t_N} - \sum_{i=1}^N A_{t_{i-1}, t_i} \vert \cF_{s}]}_{L_m(\P)}
    \lesssim_{\beta_1}
      \Gamma_1  \abs{t_0 - s}^{-\alpha} \abs{t_N - t_0}^{\beta_1}.
  \end{equation*}
\end{lemma}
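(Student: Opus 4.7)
The plan is to combine Lemma~\ref{lem:dyadic_allocation} with the conditional moment bound \eqref{eq:sewing_regularization}, and to collapse the resulting expression by a geometric summation in the dyadic level $n$.

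Setting $\Delta_n \defby 2^{-n}(t_N - t_0)$, Lemma~\ref{lem:dyadic_allocation} provides
\[
A_{t_0, t_N} - \sum_{i=1}^N A_{t_{i-1}, t_i} = \sum_{n \in \N_0} \sum_{i=0}^{2^n - 1} R^n_i,
\]
an effectively finite double sum in which each $R^n_i$ is a sum of two $\delta A$ increments whose time arguments all belong to $[t_0 + i \Delta_n,\, t_0 + (i+1) \Delta_n]$. I would take $\expect[\,\cdot\,|\,\cF_s]$ of both sides, apply the $L_m$-triangle inequality, and invoke \eqref{eq:sewing_regularization} on each of the two $\delta A$ summands that make up $R^n_i$. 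Since $s^{n,i}_1 \geq t_0$ and the total span is bounded by $\Delta_n$, this produces
\[
\norm{\expect[R^n_i \,|\, \cF_s]}_{L_m(\P)} \lesssim \Gamma_1 (s^{n,i}_1 - s)^{-\alpha} \Delta_n^{\beta_1} \leq \Gamma_1 (t_0 - s)^{-\alpha} \Delta_n^{\beta_1}.
\]

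Summing over the $2^n$ subintervals at level $n$ and then over $n$ gives
\[
\sum_{n \in \N_0} 2^n \cdot \Gamma_1 (t_0 - s)^{-\alpha} \Delta_n^{\beta_1} = \Gamma_1 (t_0 - s)^{-\alpha} (t_N - t_0)^{\beta_1} \sum_{n \in \N_0} 2^{n(1 - \beta_1)}.
\]
The geometric series converges (with constant depending only on $\beta_1$) precisely because $\beta_1 > 1$, yielding the claimed estimate.

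The main obstacle I anticipate is the uniform verification that \eqref{eq:sewing_regularization} is actually applicable on every piece, i.e.\ that $M(s^{n,i}_4 - s^{n,i}_1) \leq s^{n,i}_1 - s$ for each $(n, i)$ arising in the decomposition. Because $s^{n,i}_4 - s^{n,i}_1 \leq \Delta_n$ and $s^{n,i}_1 \geq t_0 + i \Delta_n$, this condition reduces to $M \Delta_n \leq (t_0 - s) + i \Delta_n$, which is precisely what the hypothesis $M(t_N - t_1) \leq t_0 - s$ is tailored to deliver: at deep levels or away from the leftmost subinterval the displacement $i \Delta_n$ absorbs the discrepancy between $t_N - t_0$ and $t_N - t_1$, while the potentially delicate coarse corner $(n, i) = (0, 0)$ can be treated by first isolating the contribution $\delta A_{t_0, t_1, t_N}$ (coming from $A_{t_0,t_N}-A_{t_0,t_1}-A_{t_1,t_N}$) and then applying the dyadic allocation only to the residual difference on $[t_1, t_N]$, where the hypothesis applies verbatim.
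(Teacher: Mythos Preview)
Your proof is the same as the paper's: apply Lemma~\ref{lem:dyadic_allocation}, use the triangle inequality together with~\eqref{eq:sewing_regularization} on each $R^n_i$, and sum the resulting geometric series using $\beta_1>1$. The paper does not comment on the side condition $M(s^{n,i}_4-s^{n,i}_1)\le s^{n,i}_1-s$ that you raise in your final paragraph.

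On that point: your workaround of peeling off $\delta A_{t_0,t_1,t_N}$ and running the dyadic allocation on $[t_1,t_N]$ does make the allocation clean (every piece then has left endpoint $\ge t_1$ and span at most $t_N-t_1$, so the hypothesis $M(t_N-t_1)\le t_0-s<t_1-s$ suffices). However, the isolated term $\delta A_{t_0,t_1,t_N}$ itself still requires $M(t_N-t_0)\le t_0-s$ to invoke~\eqref{eq:sewing_regularization}, and this is strictly stronger than the stated hypothesis when $t_1-t_0$ is large compared with $t_N-t_1$. So neither your argument nor the paper's actually verifies the side condition under the hypothesis as written. The honest repair is to state the lemma with $M(t_N-t_0)\le t_0-s$ in place of $M(t_N-t_1)\le t_0-s$; every application in the paper supplies this stronger inequality anyway (cf.\ the use with~\eqref{eq:pi_almost_uniform} in Lemma~\ref{lem:A_and_A_dash}, and the direct assumption $M(t_2-t_1)\le t_1-t_0$ in the proof of Theorem~\ref{thm:generalized_stochastic_sewing}).
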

\begin{proof}
  In view of the decomposition \eqref{eq:A_and_R_n_i}, the triangle inequality gives
  \begin{equation*}
    \norm{\expect[A_{t_0, t_N} - \sum_{i=1}^N A_{t_{i-1}, t_i} \vert \cF_{s}]}_{L_m(\P)}
    \leq \sum_{n \in \N_0} \sum_{i=0}^{2^n - 1} \norm{\expect[R_i^n \vert \cF_{s}]}_{L_m(\P)}.
  \end{equation*}
  By \eqref{eq:sewing_regularization} and \eqref{eq:expression_of_R_n_i},
  \begin{align*}
    \norm{\expect[R_i^n \vert \cF_{s}]}_{L_m(\P)}
    \leq 2 \Gamma_1 (t_0 - s)^{-\alpha} (2^{-n} \abs{t_N - t_0})^{\beta_1}
    = 2 \Gamma_1 2^{-n \beta_1 } \abs{t_0 - s}^{-\alpha} \abs{t_N - t_0}^{\beta_1}.
  \end{align*}
  Therefore, recalling $\beta_1 > 1$ from \eqref{eq:alpha_and_beta}, the claim follows.
\end{proof}

The following lemma is the most important technical ingredient for the proof of Theorem~\ref{thm:generalized_stochastic_sewing}.
\begin{lemma}\label{lem:refine_A_t_0_t_1}
  Under the setting of Theorem~\ref{thm:generalized_stochastic_sewing},
  let
  \begin{equation*}
    0 \leq t_0 < t_1 < \cdots < t_{N-1} < t_N \leq T.
  \end{equation*}
  Then,
  \begin{equation*}
    \norm{A_{t_0, t_N} - \sum_{i=1}^N A_{t_{i-1}, t_i} }_{L_m(\P)}
    \lesssim_{\alpha, \beta_1, \beta_2, M}  \kappa_{m, d} \Gamma_1  \abs{t_N - t_0}^{\beta_1 - \alpha}
    +  \kappa_{m, d} \Gamma_2 \abs{t_N - t_0}^{\beta_2}.
  \end{equation*}
  Under \eqref{eq:alpha_and_beta_technical}, we can replace $\kappa_{m, d} \Gamma_1$ by $\Gamma_1$.
\end{lemma}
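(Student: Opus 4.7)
The plan is to adapt the strategy of Lê's stochastic sewing lemma \cite{khoa20}, combining the dyadic decomposition of Lemma~\ref{lem:dyadic_allocation} with the BDG inequality \eqref{eq:BDG-Minkowski}. Set $u^n_i := t_0 + i \cdot 2^{-n}(t_N - t_0)$ and $h := 2^{-n}(t_N - t_0)$; Lemma~\ref{lem:dyadic_allocation} yields
\begin{equation*}
A_{t_0, t_N} - \sum_{i=1}^N A_{t_{i-1}, t_i} = \sum_{n \in \N_0} \sum_{i=0}^{2^n - 1} R^n_i,
\end{equation*}
so the task reduces to bounding $\bigl\|\sum_i R^n_i\bigr\|_{L_m(\P)}$ by something summable over $n$.

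Unlike in Lê's setting (where $\alpha = 0$ and one may center directly on $\cF_{u^n_i}$), the singular estimate \eqref{eq:sewing_regularization} only applies when the conditioning time is separated from the support of $R^n_i$ by at least $Mh$. I therefore fix an integer $K \geq M$ and, for $i \geq K$, decompose $R^n_i = X^n_i + Y^n_i$ with $X^n_i := R^n_i - \expect[R^n_i \vert \cF_{u^n_{i-K}}]$ and $Y^n_i := \expect[R^n_i \vert \cF_{u^n_{i-K}}]$, treating the finitely many boundary indices $i < K$ separately via \eqref{eq:sewing_one_half}. From \eqref{eq:sewing_one_half} and \eqref{eq:sewing_regularization} we obtain $\|R^n_i\|_{L_m(\P)} \lesssim \Gamma_2 h^{\beta_2}$ and $\|Y^n_i\|_{L_m(\P)} \lesssim_K \Gamma_1 h^{\beta_1 - \alpha}$. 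For the centered part, I partition $\{0, \ldots, 2^n - 1\}$ into $K+1$ arithmetic progressions of common difference $K+1$; within each progression the $X^n_{i_\ell}$ form a martingale-difference sequence relative to $(\cF_{u^n_{i_\ell + 1}})_\ell$, since the $K$-step gap guarantees $\expect[X^n_{i_\ell} \vert \cF_{u^n_{i_{\ell-1} + 1}}] = 0$. Applying \eqref{eq:BDG-Minkowski} on each progression and summing over the $K+1$ residue classes gives a bound of order $\kappa_{m,d} \cdot 2^{n/2} (\Gamma_2 h^{\beta_2} + \Gamma_1 h^{\beta_1 - \alpha})$, which is summable over $n$ thanks to $\beta_2 > 1/2$ and $\beta_1 - \alpha > 1/2$ in \eqref{eq:alpha_and_beta}.

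The principal obstacle is the residual sum $\sum_i Y^n_i$: a bare triangle bound gives only $\Gamma_1 (t_N - t_0)^{\beta_1 - \alpha} 2^{n(1 - \beta_1 + \alpha)}$, which is summable in $n$ only when $\beta_1 - \alpha > 1$, whereas \eqref{eq:alpha_and_beta} merely ensures $\beta_1 - \alpha > 1/2$. The plan to close this gap is to iterate the decomposition at dyadically deeper conditioning times: setting $Y^{(k),n}_i := \expect[R^n_i \vert \cF_{u^n_{i - 2^k K}}]$ (with $Y^{(0),n}_i = Y^n_i$), telescope
\begin{equation*}
Y^n_i = \sum_{k=1}^{k^*_i} \bigl(Y^{(k-1),n}_i - Y^{(k),n}_i\bigr) + \expect[R^n_i \vert \cF_{t_0}],
\end{equation*}
where $k^*_i \sim \log_2(i/K)$ is the level at which the conditioning time first falls below $t_0$. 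At level $k$, after splitting into residue classes of common difference $\geq 2^{k-1}K$, the increments $Y^{(k-1),n}_i - Y^{(k),n}_i$ are genuine martingale differences, and BDG contributes a term of order $\kappa_{m,d} \cdot 2^{k(1/2 - \alpha)} \Gamma_1 (t_N - t_0)^{\beta_1 - \alpha} \cdot 2^{n(1/2 - \beta_1 + \alpha)}$; the root term $\expect[R^n_i \vert \cF_{t_0}]$ is controlled by the triangle inequality at the full separation $ih$ together with $\beta_1 > 1$. The technical crux — and where I expect the main work to lie — is to verify that summing these contributions over $k \in \{1, \ldots, O(n)\}$ and then over $n$ produces a finite total of order $\kappa_{m,d} \Gamma_1 (t_N - t_0)^{\beta_1 - \alpha}$ in all three regimes $\alpha > 1/2$, $\alpha = 1/2$, and $\alpha < 1/2$: the first two are handled by the geometric factor and $\beta_1 - \alpha > 1/2$, while the last requires the sum over $k$ to grow like $2^{n(1/2 - \alpha)}$ and is closed by $\beta_1 > 1$. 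The stronger technical condition \eqref{eq:alpha_and_beta_technical} should correspond to a regime in which this iteration is unnecessary, explaining why the $\kappa_{m,d}$ factor can then be dropped from the $\Gamma_1$ term.
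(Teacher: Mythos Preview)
Your plan is correct for the main estimate, and it implements the same underlying idea as the paper --- iterate the martingale decomposition at successively larger conditioning gaps --- but with a different parametrization. The paper fixes a small number of iteration levels $k$ and lets the base gap $L = L_n$ grow with $n$ (so that at level $p$ the conditioning gap is $\sim L^p h$ and one eventually chooses $L \sim 2^{n/k}$ or $L \sim 2^{\delta n}$ depending on whether $\alpha < \tfrac12$ or $\alpha \geq \tfrac12$). You instead fix the base $K \geq M$ and let the number of levels grow to $k^*_i \sim n$, with dyadic gaps $2^k K$. Your geometric sum over $k$ handles all three regimes $\alpha \gtrless \tfrac12$ uniformly, so you avoid the paper's case split when choosing $L$ and $k$; conversely the paper's scheme makes the connection to the simplified proof under~\eqref{eq:alpha_and_beta_technical} transparent (it is just the case $k=1$). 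Either route closes.

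One point does need attention: your final sentence about~\eqref{eq:alpha_and_beta_technical} is only a heuristic in your framework. With $K$ fixed, the triangle bound on $\sum_i Y^n_i$ gives $\Gamma_1 (t_N-t_0)^{\beta_1-\alpha} 2^{n(1+\alpha-\beta_1)}$, which is summable only when $\beta_1 - \alpha > 1$, a strictly stronger hypothesis than~\eqref{eq:alpha_and_beta_technical}. To actually drop the $\kappa_{m,d}$ from the $\Gamma_1$-term under~\eqref{eq:alpha_and_beta_technical} you must let $K = K_n \sim 2^{\delta n}$ grow with $n$: then the level-$0$ BDG part picks up $K_n^{1/2}$ (still summable provided $\delta < 2\beta_2 - 1$) while the triangle bound on $\sum_i Y^n_i$ gains $K_n^{-\alpha}$ (summable provided $\delta > (1+\alpha-\beta_1)/\alpha$), and such a $\delta$ exists precisely under~\eqref{eq:alpha_and_beta_technical}. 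This is exactly what the paper does in its first proof, so to substantiate that last claim you should either allow $K$ to depend on $n$ or simply cite the paper's single-level argument.
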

\begin{proof}[Proof under \eqref{eq:alpha_and_beta_technical}]
  To simplify the proof, here we assume \eqref{eq:alpha_and_beta_technical}, i.e. that the additional technical condition $1+\alpha-\beta_1 < 2\alpha\beta_2 - \alpha$ holds.
  The proof in the general setting will be given in Appendix~\ref{app:technical}.

  We again use the representation \eqref{eq:A_and_R_n_i}.
  We fix a large $n \in \N$ and set $\cF^n_k \defby \cF_{t_0 + \frac{k}{2^n}(t_N - t_0)}$.
  Fix an  integer $L = L_n \in [M + 1, 2^n]$, which will be chosen later.
  We have
  \begin{multline}\label{eq:R_n_i_decomposition}
    \sum_{i=0}^{2^n - 1} R^n_i
    = \sum_{l = 0}^{L - 1} \sum_{\substack{j \ge 0:\\ L j + l < 2^n}} \Big( R^n_{Lj + l} -
    \expect[R^n_{L j + l} \vert \cF_{L (j-1) + l + 1}^n] \indic_{\{j \geq 1\}} \Big)  \\
    + \sum_{l = 0}^{L - 1} \sum_{\substack{j \ge 0:\\ L j + l < 2^n}}
    \expect[R^n_{L j + l} \vert \cF_{L (j-1) + l + 1}^n].
  \end{multline}
  We estimate the first term of \eqref{eq:R_n_i_decomposition}.
  By the BDG inequality together with Minkowski's inequality (see~\eqref{eq:BDG-Minkowski}), we have
  \begin{align*}
    \MoveEqLeft[5]
    \norm{\sum_{\substack{j \ge 0:\\ L j + l < 2^n}} \Big( R^n_{Lj + l} -
    \expect[R^n_{L j + l} \vert \cF_{L (j-1) + l + 1}^n] \indic_{\{j \geq 1\}} \Big)}_{L_m(\P)}^2 \\
    &\leq \kappa_{m, d}^2 \sum_{\substack{j \ge 0:\\ L j + l < 2^n}} \norm{ R^n_{Lj + l} -
    \expect[R^n_{L j + l} \vert \cF_{L (j-1) + l + 1}^n] \indic_{\{j \geq 1\}} }_{L_m(\P)}^2 \\
    &\leq 4 \kappa_{m, d}^2 \sum_{\substack{j \ge 0:\\ L j + l < 2^n}} \norm{ R^n_{Lj + l} }_{L_m(\P)}^2.
  \end{align*}
  Using \eqref{eq:sewing_one_half} and \eqref{eq:expression_of_R_n_i} and noting that we include more terms in the sum by requiring $j \le 2^n/L$ only instead of $Lj+l \le 2^n - 1$, we get
  \begin{equation*}
    \sum_{\substack{j \ge 0:\\ L j + l < 2^n}}\norm{ R^n_{Lj + l} }_{L_m(\P)}^2
    \leq 4 \Gamma_2^2 2^{-n (2\beta_2 - 1)} L^{-1} \abs{t_N - t_0}^{2 \beta_2}.
  \end{equation*}
  Therefore,
  \begin{multline*}
    \norm{\sum_{l = 0}^{L - 1}  \sum_{\substack{j \ge 0:\\ L j + l < 2^n}} \Big( R^n_{Lj + l} -
    \expect[R^n_{L j + l} \vert \cF_{L (j-1) + l + 1}^n] \indic_{\{j \geq 1\}} \Big)}_{L_m(\P)}
    \lesssim \kappa_{m, d} \Gamma_2 L^{\frac{1}{2}} 2^{-n(\beta_2 - \frac{1}{2})} \abs{t_N - t_0}^{\beta_2}.
  \end{multline*}

  We next estimate the second term of \eqref{eq:R_n_i_decomposition}.
  The triangle inequality yields
  \begin{equation*}
    \norm{\sum_{l = 0}^{L - 1}  \sum_{\substack{j \ge 0:\\ L j + l < 2^n}}
    \expect[R^n_{L j + l} \vert \cF_{L (j-1) + l + 1}^n]}_{L_m(\P)}
    \leq
    \sum_{l = 0}^{L - 1}  \sum_{\substack{j \ge 0:\\ L j + l < 2^n}}
    \norm{\expect[R^n_{L j + l} \vert \cF_{L (j-1) + l + 1}^n]}_{L_m(\P)}.
  \end{equation*}
  By \eqref{eq:sewing_regularization},
  \begin{equation*}
    \norm{\expect[R^n_{L j + l} \vert \cF_{L (j-1) + l + 1}^n]}_{L_m(\P)}
    \leq \Gamma_1 (L-1)^{-\alpha} 2^{-(\beta_1 - \alpha) n} \abs{t_N - t_0}^{\beta_1 - \alpha}.
  \end{equation*}
  Therefore,
  \begin{equation*}
    \norm{\sum_{l = 0}^{L - 1}  \sum_{\substack{j \ge 0:\\ L j + l < 2^n}}
    \expect[R^n_{L j + l} \vert \cF_{L (j-1) + l + 1}^n]}_{L_m(\P)}
    \lesssim_{\alpha} \Gamma_1 L^{-\alpha} 2^{-(\beta_1 - \alpha - 1) n} \abs{t_N - t_0}^{\beta_1 - \alpha}.
  \end{equation*}

  In conclusion,
  \begin{equation}\label{eq:estimate_of_R_n_i_one_step}
    \norm{\sum_{i=0}^{2^n - 1} R^n_i}_{L_m(\P)} \lesssim_{\alpha}
    \Gamma_1 L^{-\alpha} 2^{-(\beta_1 - \alpha - 1) n} \abs{t_N - t_0}^{\beta_1 - \alpha}
    + \kappa_{m, d} \Gamma_2 L^{\frac{1}{2}} 2^{-n(\beta_2 - \frac{1}{2})} \abs{t_N - t_0}^{\beta_2}.
  \end{equation}
  We wish to choose $L = L_n$ so that \eqref{eq:estimate_of_R_n_i_one_step} is summable with respect to $n$.
  We therefore set $L_n \defby \lfloor 2^{\delta n} \rfloor$, where
  \begin{equation}\label{eq:delta_under_technical}
    \alpha \delta + \beta_1 - \alpha - 1 > 0, \quad
    0 < \delta < \min\{2 \beta_2 - 1, 1 \}.
  \end{equation}
  Such a $\delta$ exists exactly under the additional technical assumption~\eqref{eq:alpha_and_beta_technical}, namely if $1 + \alpha - \beta_1 < 2 \alpha  \beta_2 - \alpha$. Then, \eqref{eq:estimate_of_R_n_i_one_step} yields
  \begin{equation*}
    \norm{ \sum_{n: 2^{n \delta} \geq M + 2}\sum_{i=0}^{2^n - 1} R^n_i}_{L_m(\P)}
    \lesssim_{\alpha, \beta_1, \beta_2} \Gamma_1 \abs{t_N - t_0}^{\beta_1 - \alpha} + \kappa_{m, d} \Gamma_2 \abs{t_N - t_0}^{\beta_2}.
  \end{equation*}
  To estimate the contribution coming from the small $n$ with $2^{n\delta} < M+2$, we apply \eqref{eq:sewing_one_half} which yields
  \begin{equation*}
    \norm{\sum_{i=0}^{2^n - 1} R^n_i}_{L_m(\P)}
    \leq 2 \Gamma_2 \sum_{i=0}^{2^n - 1} 2^{-n \beta_2} \abs{t_N - t_0}^{\beta_2} = \Gamma_2 2^{1 + n (1 - \beta_2)} \abs{t_N - t_0}^{\beta_2}.
  \end{equation*}
  Thus, we conclude
  \begin{equation*}
    \norm{ \sum_{n \in \N_0} \sum_{i=0}^{2^n - 1} R^n_i}_{L_m(\P)}
    \lesssim_{\alpha, \beta_1, \beta_2, M} \Gamma_1 \abs{t_N - t_0}^{\beta_1 - \alpha} + \kappa_{m, d} \Gamma_2 \abs{t_N - t_0}^{\beta_2},
  \end{equation*}
  where the fact $\kappa_{m, d} \geq 1$ is used.
  \end{proof}

\begin{lemma}\label{lem:A_and_A_dash}
  Under the setting of Theorem~\ref{thm:generalized_stochastic_sewing},
  let $\pi, \pi'$ be partitions of $[0, T]$ such that $\pi$ refines  $\pi'$.
  Suppose that we have
  \begin{equation}\label{eq:pi_almost_uniform}
    \min_{[s, t] \in \pi'} \abs{s - t} \geq \frac{\abs{\pi'}}{3}.
  \end{equation}
  Then, there exists  $\epsilon \in (0, 1)$ such that
  \begin{equation*}
    \norm{A^{\pi'}_T - A^{\pi}_T}_{L_m(\P)}
    \lesssim_{\alpha, \beta_1, \beta_2, M, m, d, T} (\Gamma_1 + \Gamma_2) \abs{\pi'}^{\epsilon}.
  \end{equation*}
\end{lemma}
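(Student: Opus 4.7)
Let $\pi' = \{0 = \tau_0 < \cdots < \tau_N = T\}$ and write $A^{\pi'}_T - A^{\pi}_T = \sum_{k=1}^N D_k$ with
\[
D_k := A_{\tau_{k-1},\tau_k} - \sum_{\substack{[u,v]\in\pi\\[u,v]\subset[\tau_{k-1},\tau_k]}} A_{u,v}.
\]
Applying Lemma~\ref{lem:refine_A_t_0_t_1} on each subinterval gives $\|D_k\|_{L_m(\P)} \lesssim \kappa_{m,d}(\Gamma_1|\pi'|^{\beta_1-\alpha}+\Gamma_2|\pi'|^{\beta_2})$; since \eqref{eq:pi_almost_uniform} forces $N \leq 3T/|\pi'|$, the naive triangle inequality loses a full power of $1/|\pi'|$ and does not suffice in general. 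The plan is therefore to exploit the conditional estimate of Lemma~\ref{lem:refine_A_t_0_t_1_conditioning} through a BDG-type cancellation.

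Fix an integer $m_0 \geq 3M+1$, so that the assumption $\tau_k - \tau_{k-1} \leq (\tau_{k-1}-\tau_{k-m_0})/M$ needed for Lemma~\ref{lem:refine_A_t_0_t_1_conditioning} is guaranteed by \eqref{eq:pi_almost_uniform} as soon as $k \geq m_0$. First I would split, for $k \geq m_0$,
\[
D_k = \bigl(D_k - \expect[D_k \mid \mathcal F_{\tau_{k-m_0}}]\bigr) + \expect[D_k \mid \mathcal F_{\tau_{k-m_0}}],
\]
and treat the $O(m_0)$ boundary terms $k < m_0$ trivially by triangle. For the first piece I would group the indices $k \geq m_0$ into the $m_0$ residue classes modulo $m_0$; within each class the successive summands are $\mathcal F_{\tau_{k_j}}$-measurable and centred with respect to $\mathcal F_{\tau_{k_{j-1}}}$, so \eqref{eq:BDG-Minkowski} combined with the $L_m$-bound on $D_k$ yields a contribution of order $\kappa_{m,d}^2(\Gamma_1|\pi'|^{\beta_1-\alpha-1/2}+\Gamma_2|\pi'|^{\beta_2-1/2})$, which is already $(\Gamma_1+\Gamma_2)|\pi'|^{\epsilon}$ for some $\epsilon>0$ thanks to \eqref{eq:alpha_and_beta}.

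The hard part will be the conditional remainder $S := \sum_{k\geq m_0}\expect[D_k\mid\mathcal F_{\tau_{k-m_0}}]$: Lemma~\ref{lem:refine_A_t_0_t_1_conditioning} only bounds each term by $\Gamma_1 |\pi'|^{\beta_1-\alpha}$, and summing by triangle is too lossy unless $\beta_1-\alpha>1$. To remedy this I would introduce the dyadic family $Y^{(l)}_k := \expect[D_k\mid\mathcal F_{\tau_{k-2^l m_0}}]$ (valid once $2^l m_0 \leq k$) and telescope
\[
\expect[D_k\mid\mathcal F_{\tau_{k-m_0}}] = \sum_{l=0}^{L-1}\bigl(Y^{(l)}_k - Y^{(l+1)}_k\bigr) + Y^{(L)}_k.
\]
At each level $l$ the differences $Y^{(l)}_k - Y^{(l+1)}_k$ are $\mathcal F_{\tau_{k-2^l m_0}}$-measurable and centred given $\mathcal F_{\tau_{k-2^{l+1}m_0}}$, so grouping them into residue classes modulo $2^l m_0$ again produces genuine martingale-difference sequences. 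A BDG estimate using the $L_m$-bound $\Gamma_1(2^l m_0)^{-\alpha}|\pi'|^{\beta_1-\alpha}$ from Lemma~\ref{lem:refine_A_t_0_t_1_conditioning} yields a per-level contribution of order $\kappa_{m,d}\Gamma_1(2^l m_0)^{1/2-\alpha}|\pi'|^{\beta_1-\alpha-1/2}$ (up to a $T$-dependent factor); the residual $\sum_k Y^{(L)}_k$ is then controlled by triangle, giving $\Gamma_1(2^L m_0)^{-\alpha}|\pi'|^{\beta_1-\alpha-1}$. Choosing the truncation so that $2^L m_0 \sim 1/(\kappa_{m,d}^2 |\pi'|)$ balances the two and produces $\|S\|_{L_m(\P)} \lesssim \Gamma_1 |\pi'|^{\beta_1-1}$, a positive power because $\beta_1>1$. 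The delicate point is precisely this balancing when $\alpha<1/2$, where the factor $(2^l m_0)^{1/2-\alpha}$ grows with $l$ and forces $L$ to scale like $\log(1/|\pi'|)$ (with a logarithmic loss at $\alpha=1/2$ that can be absorbed by taking $\epsilon$ slightly smaller). Collecting the three contributions produces the desired estimate with $\epsilon = \epsilon(\alpha,\beta_1,\beta_2)\in(0,1)$.
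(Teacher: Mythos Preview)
Your approach is correct and is essentially the paper's: split $A^{\pi'}_T - A^{\pi}_T = \sum_k D_k$ into a BDG-controllable martingale-difference part (via Lemma~\ref{lem:refine_A_t_0_t_1}) plus a conditional-expectation residual, and then iterate the conditioning to harvest the $(-\alpha)$-power from Lemma~\ref{lem:refine_A_t_0_t_1_conditioning}. The paper organises the iteration $L$-adically (a fixed block length $L$ repeated to depth $k$, with both parameters tuned) rather than with your dyadic telescope in the conditioning scale, but this is only cosmetic; the one point to tidy in your write-up is that $Y^{(l)}_k$ is defined only for $k>2^l m_0$, which is most cleanly handled by the convention $\mathcal F_{\tau_j}:=\mathcal F_0$ for $j\le 0$ (the residual then becomes $\sum_k \mathbb E[D_k\mid\mathcal F_0]$, still bounded by $\Gamma_1\sum_k k^{-\alpha}|\pi'|^{\beta_1-\alpha}\lesssim\Gamma_1|\pi'|^{\beta_1-1}$).
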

\begin{proof}[Sketch of the proof]
  Here we give a sketch of the proof under \eqref{eq:alpha_and_beta_technical}.
  The complete proof is given in Appendix~\ref{app:technical}.
  The argument is similar to Lemma~\ref{lem:refine_A_t_0_t_1}.

  Write
  \begin{equation*}
    \pi' =: \{0 = t_0 < t_1 < \cdots < t_{N-1} < t_N = T\}
  \end{equation*}
   and
  \begin{equation*}
    \set{[s, t] \in \pi \given t_j \leq s < t \leq t_{j+1}}
    =: \{t_j = t^j_0 < t^j_1 < \cdots < t^j_{N_j - 1} < t^j_{N_j} = t_{j+1}\}.
  \end{equation*}
  We set $L \defby \lfloor \abs{\pi'}^{-\delta} \rfloor$, where $\delta$ satisfies \eqref{eq:delta_under_technical}.
  We set
  \begin{equation*}
    Z_{j}^{l} \defby A_{t_{jL + l}, t_{jL + l +1}} - \sum_{k=1}^{N_{jL + l}} A_{t^{jL + l}_{k-1}, t^{jL + l}_{k}}.
  \end{equation*}
  As in Lemma~\ref{lem:refine_A_t_0_t_1}, we consider the decomposition $A^{\pi'}_T - A^{\pi}_T = A + B$, where
  \begin{equation*}
    A \defby \sum_{l < L} \sum_{j: Lj < N - l}
    \Big\{ Z^{l}_j - \expect[Z^{l}_j \vert \cF_{t_{(j-1)L + l + 1}}]   \Big\}, \quad
    B \defby \sum_{l < L} \sum_{j: Lj < N - l} \expect[Z^{l}_j \vert \cF_{t_{(j-1)L + l + 1}}].
  \end{equation*}
  We estimate $A$ by using the BDG inequality, Lemma~\ref{lem:refine_A_t_0_t_1} and \eqref{eq:pi_almost_uniform}, \tymchange{to obtain
  \begin{align*}
    \norm{A}_{L_m(\mathbb{P})} \lesssim_{\alpha, \beta_1, \beta_2, M, m, d, T} L^{\frac{1}{2}} (\Gamma_1 \abs{\pi'}^{\beta_1 - \alpha - \frac{1}{2}} + 
    \Gamma_2 \abs{\pi'}^{\beta_2 - \frac{1}{2}}).
  \end{align*}
  }
  We estimate $B$ by using the triangle inequality, Lemma~\ref{lem:refine_A_t_0_t_1_conditioning} and \eqref{eq:pi_almost_uniform}, 
  \tymchange{to obtain 
  \begin{align*}
    \norm{B}_{L_m(\mathbb{P})} \lesssim_{\alpha, \beta_1, M, m, d, T} \Gamma_1 L^{-\alpha k} \abs{\pi'}^{\beta_1 - \alpha - 1}.
  \end{align*}
  As in Lemma~\ref{lem:refine_A_t_0_t_1}, we choose $L \defby \lfloor \abs{\pi'}^{-\delta}  \rfloor$ with $\delta$ satisfying \eqref{eq:delta_under_technical}. 
  We then obtain the claimed estimate.
  } 
\end{proof}

{
\begin{remark}\label{rem:A_dash_and_curly_A}
	In the setting of Lemma~\ref{lem:A_and_A_dash}, assume that the adapted process $(\mathcal A_t)_{t \in [0,T]}$ satisfies~\eqref{eq:curl_A_conditional} and~\eqref{eq:curl_A_one_half}. Then we obtain for some $\varepsilon > 0$:
	\begin{equation*}
    \norm{A^{\pi'}_T - \mathcal A_T}_{L_m(\P)}
    \lesssim_{\alpha, \beta_1, \beta_2, M, m, d, T} (\Gamma_1 + \Gamma_2) \abs{\pi'}^{\epsilon}.
  \end{equation*}
  Indeed, it suffices to replace $A_{t_{jL + l}, t_{jL + l +1}}$ by $\mathcal A_{t_{jL + l}, t_{jL + l +1}}$ in the previous proof.
\end{remark}
}

\begin{lemma}\label{lem:partition}
  Let $\pi$ be a partition of $[0, T]$. Then, there exists a partition $\pi'$ of $[0, T]$ such that
  $\pi$ refines $\pi'$, $\abs{\pi'} \leq 3 \abs{\pi}$ and
  \begin{equation*}
    \min_{[s, t] \in \pi'} \abs{t - s} \geq  \frac{\abs{\pi'}}{3}.
  \end{equation*}
\end{lemma}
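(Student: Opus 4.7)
The plan is a left-to-right greedy merge. Set $h \defby \abs{\pi}$ and enumerate $\pi = \{0 = u_0 < u_1 < \cdots < u_n = T\}$. Define $s_0 \defby 0$, and having defined $s_k < T$, let $s_{k+1}$ be the smallest element of $\pi \cap (s_k, T]$ with $s_{k+1} - s_k \geq h$, if such an element exists (\emph{Case A}); if none exists, set $s_{k+1} \defby T$ and stop (\emph{Case B}). Because at least one interval of $\pi$ has length $h$, we have $T \geq h$, so the first step lies in Case A; the procedure is therefore well-posed and terminates at some index $K$ with $s_K = T$. For every Case A step I claim $s_{k+1} - s_k \in [h, 2h)$: the lower bound is by construction, while for the upper bound the immediate $\pi$-predecessor $v$ of $s_{k+1}$ satisfies $v - s_k < h$ (trivially if $v = s_k$, otherwise by minimality of $s_{k+1}$) and $s_{k+1} - v \leq h$ (since $\abs{\pi} = h$).

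Suppose the terminating $K$-th step falls in Case B; then $T - s_{K-1} < h$. Since the first step was in Case A, one must have $K \geq 2$ (otherwise $s_{K-1} = 0$ and $T < h$, contradicting $T \geq h$). Absorb the short tail by setting $\pi' \defby \{s_0, s_1, \ldots, s_{K-2}, T\}$; the new last interval then has length
\[ T - s_{K-2} \;=\; (T - s_{K-1}) + (s_{K-1} - s_{K-2}) \;\in\; (h, 3h). \]
If the terminating step instead falls in Case A, simply take $\pi' \defby \{s_0, s_1, \ldots, s_K\}$.

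By construction $\pi' \subseteq \pi$, so $\pi$ refines $\pi'$, and every interval of $\pi'$ has length in $[h, 3h)$. Consequently $\abs{\pi'} < 3h = 3\abs{\pi}$ and $\min_{[s, t] \in \pi'} \abs{t - s} \geq h > \abs{\pi'}/3$, as required. There is no deep obstacle in this argument; the only delicate point is the potentially short final interval produced in Case B, which is handled by merging it with its predecessor, and checking $K \geq 2$ so that this merge is legal, which reduces to $T \geq h$.
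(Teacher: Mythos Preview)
Your proof is correct and follows essentially the same greedy left-to-right merging strategy as the paper: both constructions select the first partition point at distance at least $h=\abs{\pi}$ from the current endpoint, and both handle the potentially short final interval by absorbing it into its predecessor. Your write-up is arguably cleaner in that it makes the two cases and the check $K\geq 2$ explicit, but the underlying construction is the same.
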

\begin{proof}
  We write $\pi = \{0=t_0 < t_1 < \cdots <t_{N-1} < t_N = T\}$.
  We set $k_0 \defby -1$ and for $l\in \N$ we inductively set
  \begin{equation*}
    k_l \defby \inf \set{j > k_{l-1} \given t_{j+1} - t_{k_{l-1} + 1} \ge \abs{\pi}}, \quad \mbox{where } \inf \emptyset \defby N.
  \end{equation*}
  Set $L \defby \sup \set{l \given k_l < N}$. Then, we define
  \begin{equation*}
    s_j \defby
    \begin{cases}
      t_{k_j + 1} & \text{if } j < L, \\
      t_N         & \text{if } j= L.
    \end{cases}
  \end{equation*}
  By construction, $\pi'= \{s_j\}_{j=1}^L$ satisfies the claimed properties: $s_{j+1} - s_j \le 2|\pi|$ if $j<L-2$, and $s_L - s_{L-1} \le 3 |\pi|$, so $|\pi'| \le 3 |\pi|$; moreover, $\min_{[s,t] \in \pi'}|t-s| \ge |\pi| \ge 3^{-1} |\pi'|$.
\end{proof}

\begin{proof}[Proof of Theorem~\ref{thm:generalized_stochastic_sewing}]
  We will not write down dependence on $\alpha, \beta_1, \beta_2, M, m, d, T$.
  We first prove the convergence of $(A^{\pi}_{\tau})_{\pi}$. Without loss of generality, we assume $\tau = T$.
  Let $\pi_1, \pi_2$ be partitions of $[0, T]$. By Lemma \ref{lem:partition},
  there exist partitions $\pi_1'$, $\pi_2'$ such that for $j \in \{1, 2\}$
  the partition $\pi_j$ refines $\pi_j'$, $\abs{\pi'_j} \leq 3 \abs{\pi_j}$ and
  \begin{equation*}
    \min_{[s, t] \in \pi'_j} \abs{t- s} \geq 3^{-1} \abs{\pi_j'}.
  \end{equation*}
  Lemma~\ref{lem:A_and_A_dash} shows that
  for some $\epsilon > 0$ we have
  \begin{equation*}
    \norm{A^{\pi_j}_T - A^{\pi_j'}_T}_{L_m(\P)} \lesssim (\Gamma_1 + \Gamma_2) \abs{\pi_j}^{\epsilon}.
  \end{equation*}
  Therefore, by the triangle inequality,
  \begin{equation}\label{eq:A_pi}
    \norm{A^{\pi_1}_T - A^{\pi_2}_T}_{L_m(\P)}
    \lesssim \norm{A^{\pi_1'}_T - A^{\pi_2'}_T}_{L_m(\P)} + (\Gamma_1 + \Gamma_2)(\abs{\pi_1}^{\epsilon} + \abs{\pi_2}^{\epsilon}).
  \end{equation}
  Let $\pi$ refine both $\pi_1'$ and $\pi_2'$. Lemma~\ref{lem:A_and_A_dash} implies that
  \begin{equation}\label{eq:A_dash_pi}
    \norm{A^{\pi_1'}_T - A^{\pi_2'}_T}_{L_m(\P)}
    \leq \norm{A^{\pi_1'}_T - A^{\pi}_T}_{L_m(\P)}
    + \norm{A^{\pi_{\tymchange{2}}'}_T - A^{\pi}_T}_{L_m(\P)}
    \lesssim (\Gamma_1 + \Gamma_2) (\abs{\pi_1}^{\epsilon} + \abs{\pi_2}^{\epsilon}).
  \end{equation}
  The estimates \eqref{eq:A_pi} and \eqref{eq:A_dash_pi} show
  \begin{equation*}
    \norm{A^{\pi_1}_T - A^{\pi_2}_T}_{L_m(\P)} \lesssim (\Gamma_1 + \Gamma_2)(\abs{\pi_1}^{\epsilon} + \abs{\pi_2}^{\epsilon}).
  \end{equation*}
  Thus, $\{A^{\pi}_T\}_{\pi}$ forms a Cauchy {net} in $L_m(\P)$. We denote the limit by $\sA_T$.

  We next prove that $(\sA_t)_{t \in [0, T]}$ satisfies \eqref{eq:curl_A_conditional} and \eqref{eq:curl_A_one_half}.
  Let $t_0 < t_1 < t_2$ be such that $M(t_2 - t_1) \leq t_1 - t_0$. Let $\pi_n = \{ t_1 + k 2^{-n}(t_2-t_1): k=0,\dots, 2^n\}$ be
  the $n$th dyadic partition of $[t_1, t_2]$ and we write
  \begin{equation*}
    A^n_{t_1, t_2} \defby \sum_{[s, t] \in \pi_n} A_{s, t}.
  \end{equation*}
  We have
  \begin{equation}\label{eq:sA_conditional_limit}
    \expect[\sA_{t_1, t_2} - A_{t_1, t_2} \vert \cF_{t_0}] =
    \lim_{n \to \infty} \expect[A^n_{t_1, t_2} - A_{t_1, t_2} \vert \cF_{t_0}] \quad \text{in } L_m(\P).
  \end{equation}
  By Lemma~\ref{lem:refine_A_t_0_t_1_conditioning},
  \begin{equation*}
    \norm{\expect[A_{t_1, t_2} - A^n_{t_1, t_2} \vert \cF_{t_0}]}_{L_m(\P)}
    \lesssim_{\beta_1} \Gamma_1 \abs{t_1 - t_0}^{-\alpha} \abs{t_2 - t_1}^{\beta_1}.
  \end{equation*}
  In this estimate, we can replace $A^n_{t_1, t_2}$ by $\sA_{t_1, t_2}$ in view of \eqref{eq:sA_conditional_limit}.
  Similarly, by Lemma~\ref{lem:refine_A_t_0_t_1}, we obtain
  \begin{equation*}
    \norm{\sA_{t_1, t_2} - A_{t_1, t_2}}_{L_m(\P)}
    \lesssim_{\alpha, \beta_1, \beta_2, M}  \kappa_{m, d} \Gamma_1 \abs{t_2 - t_1}^{\beta_1 - \alpha}
    + \kappa_{m, d} \Gamma_2 \abs{t_2 - t_1}^{\beta_2}.
  \end{equation*}
  Under \eqref{eq:alpha_and_beta_technical}, we can replace $ \kappa_{m, d} \Gamma_1$ by $\Gamma_1$.

  Finally, let us prove the uniqueness of $\cA$. Let $(\tilde{\cA}_t)_{t \in [0, T]}$ be another
  adapted process satisfying $\tilde{\cA}_0 = 0$, \eqref{eq:curl_A_conditional} and \eqref{eq:curl_A_one_half}.
  It suffices to show $\cA_T = \tilde{\cA}_T$ \tymchange{almost surely}.
  Let $\pi_n$ be the $n$th dyadic partition of $[0, T]$. {By Remark~\ref{rem:A_dash_and_curly_A} we have
  \[
  	\norm{\cA_T - \tilde \cA_T}_{L_m(\PP)} \le \norm{\cA_T - A^{\pi^n}_T}_{L_m(\PP)} + \norm{A^{\pi^n}_T - \tilde \cA_T}_{L_m(\PP)} \lesssim 2^{-n\varepsilon}T^\varepsilon.
  \]
  Since $n \in \N$ is arbitrary we must have $\cA_T = \tilde \cA_T$ \tymchange{almost surely}.
  }
\end{proof}
As in \cite[Theorem~4.1]{athreya2021wellposedness} of Athreya, Butkovsky, Lê and Mytnik, we will give an extension of
Theorem~\ref{thm:generalized_stochastic_sewing} that allows singularity at $t = 0$, which will be needed in Section~\ref{sec:local_time}.
\begin{corollary}\label{cor:singular_generalized_stochastic_sewing_lemma}
  Suppose that we have a filtration $(\cF_t)_{t \in [0, T]}$ and a family of $\R^d$-valued random variables
  $(\sewing_{s, t})_{0 \leq s \leq t \leq T}$ such that
  $\sewing_{s, s} = 0$ for every $s \in [0, T]$ and such that $\sewing_{s, t}$ is $\cF_t$-measurable.
  Furthermore, suppose that there exist constants
  \begin{equation*}
    m \in [2, \infty), \quad \Gamma_1, \Gamma_2, \Gamma_3, M \in [0, \infty),
       \quad \alpha, \beta_1, \beta_2, \beta_3, \gamma_1, \gamma_2 \in [0, \infty)
  \end{equation*}
  such that the following conditions are satisfied.
  \begin{itemize}
    \item For every  $0 \leq t_0 < t_1 < t_2 < t_3 \leq T$, we have
    \begin{align}
      \label{eq:sewing_regularization_singular}
      \norm{\expect[\delta \sewing_{t_1, t_2, t_3} \vert \cF_{t_0}]}_{L_m(\P)} &\leq \Gamma_1 t_1^{-\gamma_1}
      (t_1 - t_0)^{-\alpha} (t_3 - t_1)^{\beta_1}, \\
      \label{eq:sewing_one_half_singular}
      \norm{\delta \sewing_{t_0, t_1, t_2}}_{L_m(\P)}
      &\leq \Gamma_2  t_0^{-\gamma_2} (t_2 - t_0)^{\beta_2}, \\
      \label{eq:sewing_continuity_at_zero}
      \norm{\delta \sewing_{t_0, t_1, t_2}}_{L_m(\P)}
      &\leq \Gamma_3  (t_2 - t_0)^{\beta_3},
    \end{align}
    where $M(t_3 - t_1) \leq t_1 - t_0$ is assumed for \eqref{eq:sewing_regularization_singular}
    and $t_0 > 0$ is assumed for \eqref{eq:sewing_one_half_singular}.
    \item We have
    \begin{equation}\label{eq:alpha_and_beta_gamma}
      \beta_1  > 1, \quad \beta_2  > \frac{1}{2},
      \quad \beta_1 - \alpha > \frac{1}{2},
      \quad \gamma_1, \gamma_2 < \frac{1}{2}, \quad \beta_3 > 0.
    \end{equation}
  \end{itemize}
  Then, there exists a unique, up to modifications, $\R^d$-valued stochastic process $(\cA_t)_{t \in [0, T]}$ with
  the following properties.
  \begin{itemize}
    \item $\cA_0 = 0$, $\cA_t$ is $\cF_t$-measurable and $\cA_t$ belongs to $L_m(\P)$.
    \item There exist non-negative constants $C_1, \ldots, C_6$ such that
    \begin{align}
      \label{eq:curl_A_conditional_singular}
      &\norm{\expect[\cA_{t_2} - \cA_{t_1} - A_{t_1, t_2} \vert \cF_{t_0}]}_{L_m(\P)}
      \leq C_1 t_1^{-\gamma_1} \abs{t_1 - t_0}^{-\alpha} \abs{t_2 - t_1}^{\beta_1}, \\
      \label{eq:curl_A_one_half_singular}
      &\norm{\cA_{t_2} - \cA_{t_1} - A_{t_1, t_2} }_{L_m(\P)} \leq  C_2 t_1^{-\gamma_1} \abs{t_2 - t_1}^{\beta_1 - \alpha}
      + C_3 t_1^{-\gamma_2} \abs{t_2 - t_1}^{\beta_2}, \\
      \label{eq:curl_A_continuity_at_zero}
      &\norm{\cA_{t_2}  - \cA_{t_1} - A_{t_1, t_2} }_{L_m(\P)} \leq  C_4 \abs{t_2 - t_1}^{\beta_1 - \alpha_1 -\gamma_1}
      + C_5 \abs{t_2 - t_1}^{\beta_2 - \gamma_2}
      + C_6 \abs{t_2 - t_1}^{\beta_3},
    \end{align}
    where $t_2 - t_1 \leq M^{-1}(t_1 - t_0)$ is assumed for the inequality \eqref{eq:curl_A_conditional_singular}
    and $t_1 > 0$ is assumed for the inequality \eqref{eq:curl_A_one_half_singular}.
  \end{itemize}
  In fact, we can choose $C_1, \ldots, C_6$ so that
  \begin{align*}
    &C_1 \lesssim_{\beta_1} \Gamma_1, \quad C_2 \lesssim_{\alpha, \beta_1, \beta_2, M}  \kappa_{m, d} \Gamma_1, \quad
        C_3 \lesssim_{\alpha, \beta_1, \beta_2, M} \kappa_{m, d} \Gamma_2, \\
    &C_4 \lesssim_{\alpha, \beta_1, \gamma_1, M} \kappa_{m, d} \Gamma_1,
    \quad
    C_5 \lesssim_{\beta_2, \gamma_2, M} \kappa_{m, d} \Gamma_2, \quad
        C_6 \lesssim_{\beta_3, M} \kappa_{m, d} \Gamma_3.
  \end{align*}
  Furthermore, for $\tau \in [0, T]$, if we set
  \begin{equation*}
    \sewing^{\pi}_{\tau} \defby \sum_{[s, t] \in \pi} \sewing_{s, t}, \quad \text{where $\pi$ is a partition of $[0, \tau]$},
  \end{equation*}
  then the family $(A^{\pi}_{\tau})_{\pi}$ converges to $\cA_{\tau}$ in $L_m(\P)$ as
  $\abs{\pi} \to 0$.
\end{corollary}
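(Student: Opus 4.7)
The idea is to reduce to Theorem~\ref{thm:generalized_stochastic_sewing} on intervals bounded away from the singularity and then to extend the construction back to $t=0$ using the singularity-free bound \eqref{eq:sewing_continuity_at_zero}. For each $\varepsilon \in (0,T)$, on $[\varepsilon, T]$ the factors $t_1^{-\gamma_1}$ and $t_0^{-\gamma_2}$ are uniformly bounded by $\varepsilon^{-\gamma_1}$ and $\varepsilon^{-\gamma_2}$, so Theorem~\ref{thm:generalized_stochastic_sewing} applies with constants $\Gamma_1 \varepsilon^{-\gamma_1}$ and $\Gamma_2 \varepsilon^{-\gamma_2}$, producing a unique adapted process $\cA^{(\varepsilon)}\colon [\varepsilon, T] \to L_m(\PP)$ with $\cA^{(\varepsilon)}_\varepsilon = 0$. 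By uniqueness the family is consistent, $\cA^{(\varepsilon_1)}_t - \cA^{(\varepsilon_1)}_{\varepsilon_2} = \cA^{(\varepsilon_2)}_t$ for $0 < \varepsilon_1 \le \varepsilon_2 \le t$, so that the increments $\cA_{s,t} := \cA^{(s)}_t$ are well-defined for $0 < s \le t \le T$. Applying Theorem~\ref{thm:generalized_stochastic_sewing} on $[t_1, T]$ with constants $\Gamma_i t_1^{-\gamma_i}$ then yields the singular bounds \eqref{eq:curl_A_conditional_singular} and \eqref{eq:curl_A_one_half_singular} with the claimed dependence on $\Gamma_i$ and $\kappa_{m,d}$.

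To extend to $0$ I would set $\cA_0 := 0$ and define $\cA_t := \lim_{\varepsilon \downarrow 0}\bigl(A_{0, \varepsilon} + \cA^{(\varepsilon)}_t\bigr)$ in $L_m(\PP)$. For $0 < \varepsilon' < \varepsilon \le t$, consistency rewrites the successive difference as
\[
\bigl(A_{0, \varepsilon'} + \cA^{(\varepsilon')}_t\bigr) - \bigl(A_{0, \varepsilon} + \cA^{(\varepsilon)}_t\bigr) = A_{0, \varepsilon'} + \cA^{(\varepsilon')}_\varepsilon - A_{0, \varepsilon},
\]
which is precisely the sewing correction on $[0, \varepsilon]$ comparing $A_{0,\varepsilon}$ with the Riemann-type refinement starting at $\varepsilon'$. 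To show this vanishes in $L_m(\PP)$ as $\varepsilon \to 0$ uniformly in $\varepsilon' \in (0, \varepsilon)$, I would adapt the dyadic argument of Lemma~\ref{lem:refine_A_t_0_t_1} to $[0, \varepsilon]$: at each level $n$, estimate the left-most sub-interval (the one touching $0$) via the non-singular bound \eqref{eq:sewing_continuity_at_zero}, and estimate the remaining sub-intervals via \eqref{eq:sewing_one_half_singular} applied at their left endpoints, which are $\ge i 2^{-n}\varepsilon$. The resulting series $\sum_{i \ge 1}(i 2^{-n})^{-\gamma_2}$ remains tame after BDG because $\gamma_2 < 1/2$, exactly as the scale $L_n = \lfloor 2^{n\delta}\rfloor$ was balanced in Lemma~\ref{lem:refine_A_t_0_t_1}; the $i=0$ contribution is controlled because $\beta_3 > 0$.

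The same dyadic-plus-\eqref{eq:sewing_continuity_at_zero} analysis, applied now to $[t_1, t_2]$ with $t_1 \ge 0$ possibly touching $0$, yields the three-term bound \eqref{eq:curl_A_continuity_at_zero}, where the exponents $\beta_1 - \alpha - \gamma_1$, $\beta_2 - \gamma_2$ and $\beta_3$ are all positive by \eqref{eq:alpha_and_beta_gamma}. Convergence of the Riemann sums $A^\pi_\tau$ over partitions of $[0, \tau]$ is then obtained by splitting $\pi$ at its smallest positive point $t_1^\pi$: the tail on $[t_1^\pi, \tau]$ converges in $L_m(\PP)$ to $\cA^{(t_1^\pi)}_\tau$ by Theorem~\ref{thm:generalized_stochastic_sewing}, and $A_{0, t_1^\pi} + \cA^{(t_1^\pi)}_\tau$ converges to $\cA_\tau$ by the Cauchy estimate above. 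Uniqueness of $\cA$ is obtained as in Theorem~\ref{thm:generalized_stochastic_sewing} via the analogue of Remark~\ref{rem:A_dash_and_curly_A}. The main technical obstacle is the uniform-in-$\varepsilon'$ control of the sewing correction on $[0, \varepsilon]$: the left-endpoint singularity of \eqref{eq:sewing_one_half_singular} must be absorbed at every dyadic scale without destroying the stochastic gain from $\beta_2 > 1/2$ and $\beta_1 - \alpha > 1/2$, and this is exactly what the assumption $\gamma_1, \gamma_2 < 1/2$ buys after the BDG step.
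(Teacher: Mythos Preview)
Your core technical idea---treat the leftmost dyadic sub-interval via the non-singular bound \eqref{eq:sewing_continuity_at_zero} and absorb the remaining singular factors $\sum_{i\ge 1}(i2^{-n})^{-2\gamma_j}$ after the BDG step using $\gamma_j<\tfrac12$---is exactly the mechanism the paper uses. The paper proceeds by directly re-proving the key refinement estimate (the analogue of Lemma~\ref{lem:refine_A_t_0_t_1}) in two forms: a $t_0>0$ version yielding \eqref{eq:curl_A_one_half_singular} (obtained simply by replacing $\Gamma_j$ with $t_0^{-\gamma_j}\Gamma_j$ in the old proof), and a $t_0\ge 0$ version yielding \eqref{eq:curl_A_continuity_at_zero} (obtained by the dyadic-plus-$\Gamma_3$ argument you describe). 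Once these are in hand the paper declares the remaining steps---the analogue of Lemma~\ref{lem:A_and_A_dash}, convergence, uniqueness---to follow verbatim from the proof of Theorem~\ref{thm:generalized_stochastic_sewing}.

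Your organization differs: you invoke Theorem~\ref{thm:generalized_stochastic_sewing} as a black box on $[\varepsilon,T]$ and patch in the origin afterwards. This works cleanly for the bounds \eqref{eq:curl_A_conditional_singular}--\eqref{eq:curl_A_continuity_at_zero} and for the existence of $\cA$, but your convergence argument has a real gap. Splitting $A^\pi_\tau$ at $t_1^\pi$ and appealing to Theorem~\ref{thm:generalized_stochastic_sewing} on $[t_1^\pi,\tau]$ gives, via Remark~\ref{rem:convergence_rate}, an error of order $(\Gamma_1 (t_1^\pi)^{-\gamma_1}+\Gamma_2 (t_1^\pi)^{-\gamma_2})\,|\pi|^{\epsilon}$. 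Since nothing prevents $t_1^\pi$ from being much smaller than any power of $|\pi|$, this bound need not vanish. The remedy is precisely what the paper does implicitly: once you have established the $t_0\ge 0$ refinement estimate, run the \emph{same} dyadic-plus-$\Gamma_3$ argument at the level of Lemma~\ref{lem:A_and_A_dash} on $[0,\tau]$ (handling the first $\pi'$-interval via \eqref{eq:curl_A_continuity_at_zero} and summing the singular weights over the rest). This yields a comparison $\|A^{\pi'}_\tau-A^\pi_\tau\|_{L_m}\lesssim |\pi'|^{\epsilon'}$ with no singular prefactor, and convergence follows as in Theorem~\ref{thm:generalized_stochastic_sewing}. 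In short: you already have the right lemma; use it again for convergence instead of routing through Theorem~\ref{thm:generalized_stochastic_sewing} on $[t_1^\pi,\tau]$.
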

The proof is given in Appendix~\ref{app:technical}.
\section{Integration along fractional Brownian motions}\label{sec:stochastic_integral}
The goal of this section is to prove the convergence of It\^o and Stratonovich approximations of
\begin{equation*}
  \int_0^t f(B_s) \dd B_s \quad \text{and} \quad \int_0^t f(B_s) \circ \dd B_s
\end{equation*}
along a multidimensional fractional Brownian motion $B$ with Hurst parameter $H$, using Theorem~\ref{thm:generalized_stochastic_sewing}.
For It\^o's case, we let $H \in (\frac{1}{2}, 1)$ and for Stratonovich's case, we let $H \in (\frac{1}{6}, \frac{1}{2})$.

\begin{definition}\label{def:fbm}
  Let $(\cF)_{t \in \R}$ be a filtration.
  We say that a process $B$ is an \emph{$(\cF_t)$-fractional Brownian motion with Hurst parameter $H \in (0, 1)$}
  if
  \begin{itemize}
    \item a two-sided $d$-dimensional $(\cF_t)$-Brownian motion $(W_t)_{t \in \R}$ is given;
    \item a random variable $B(0)$ is a (not necessarily centered) $\cF_0$-measurable
    $\R^d$-valued Gaussian random variable and is independent of $(W_t)_{t \in \R}$;
    \item if we set
    \begin{equation*}
      K(t, s) \defby K_H(t, s) \defby [(t-s)_+^{H-\frac{1}{2}} - (-s)_+^{H- \frac{1}{2}}],
    \end{equation*}
    then we have the \emph{Mandelbrot--van Ness representation} (\cite{mandelbrot68})
    \begin{equation}\label{eq:fbm_def}
      B_t = B(0) + \int_{\R} K_H(t, s) \dd W_s.
    \end{equation}
  \end{itemize}
\end{definition}
If $B$ has the representation \eqref{eq:fbm_def}, then
\begin{equation*}
  \expect[(B_t^i - B_s^i)(B_t^j - B_s^j)] = \delta_{ij} c_H \abs{t - s}^{2H},
  \quad c_H \defby \frac{3/2-H}{2H} \cB(2-2H, H+1/2)
\end{equation*}
where we write $B = (B^i)_{i=1}^d$ in components and $\cB$ is the Beta function
\begin{equation*}
  \cB(\alpha, \beta) \defby \int_0^1 t^{\alpha-1} (1-t)^{\beta-1} \dd t.
\end{equation*}
Regarding the expression of the constant $c_H$, see \cite[Appendix~B]{picard11}.
In particular, we have
\begin{equation}\label{eq:correlation_fbm}
  \expect[(B_s^i - B(0)^i) (B_t^i - B(0)^i) ] = \frac{c_H}{2} (t^{2H} + s^{2H} - \abs{t-s}^{2H}).
\end{equation}

In this section, we always write $B$ for an $(\cF_t)$-fractional Brownian motion.
An advantage of the representation \eqref{eq:fbm_def} is that given $v < s$, we have the decomposition
\begin{equation*}
  B_s - B(0) = \int_{-\infty}^v K(s, r) \dd W_r + \int_v^s K(s, r) \dd W_r,
\end{equation*}
where the second term $\int_v^s K(s, r) \dd W_r$ is independent of $\cF_v$.
Later we will need to estimate the correlation of
\begin{equation*}
  \int_v^s K(s, r) \dd W_r, \quad s > v.
\end{equation*}
We note that for $s \leq t$
\begin{equation*}
  \expect[\int_v^s K(s, r) \dd W_r^i \int_v^t K(t, r) \dd W_r^j ] = \delta_{ij} \int_v^s K(s, r) K(t, r) \dd r.
\end{equation*}
\begin{lemma}\label{lem:kernel_correlation}
  Let $H \neq \frac{1}{2}$.
  Let $0 \leq v < s \leq t$ be such that $t-s \leq s - v$. Then,
  \begin{multline*}
    \int_v^s K(s, r) K(t, r) \dd r  \\
      = \frac{1}{2H} (s-v)^{2H} + \frac{1}{2} (s-v)^{2H-1} (t-s) - \frac{c_H}{2} (t-s)^{2H} + g_H(v, s, t)
  \end{multline*}
  where we have
  \begin{equation*}
    \abs{g_H(v, s, t)} \lesssim_H (s - v)^{2H - 2} (t - s)^2.
  \end{equation*}
  uniformly over such $v, s, t$.
\end{lemma}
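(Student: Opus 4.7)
The plan is to exploit the Gaussian covariance structure of $B$ on the interval $[v, s]$. Since $v \geq 0$ and $r \in [v, s]$, the negative-part terms in the kernels vanish, so $K(s, r) = (s-r)^{H-1/2}$ and $K(t, r) = (t-r)^{H-1/2}$. Substituting $u = s - r$ and writing $a := s - v$, $b := t - s$ (so $0 < b \leq a$), the statement reduces to showing
\begin{equation*}
J(a, b) := \int_0^a u^{H-1/2}(u+b)^{H-1/2}\dd u = \frac{a^{2H}}{2H} + \frac{1}{2}a^{2H-1}b - \frac{c_H}{2}b^{2H} + O_H(a^{2H-2}b^2).
\end{equation*}

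The first step is to apply the covariance formula \eqref{eq:correlation_fbm} with the reference point shifted from $0$ to $v$: combined with the Volterra representation $B_t - B_v = \int_{\R} [K(t, r) - K(v, r)]\dd W_r$ and the fact that $K(s, \cdot)$ vanishes on $(s, \infty)$, this yields the exact identity
\begin{equation*}
J(a, b) + J_1(a, b) = \frac{c_H}{2}\bigl[a^{2H} + (a+b)^{2H} - b^{2H}\bigr],
\end{equation*}
where, after the substitution $r = v - \rho$,
\begin{equation*}
J_1(a, b) = \int_0^\infty \bigl[(a+\rho)^{H-1/2} - \rho^{H-1/2}\bigr]\bigl[(a+b+\rho)^{H-1/2} - \rho^{H-1/2}\bigr]\dd\rho.
\end{equation*}
The merit of this identity is that all singular behaviour in $b$ is packaged into the explicit $b^{2H}$ on the right-hand side, while $J_1$ depends on $b$ in a regular way.

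Next I would Taylor expand both sides in $b$ around $b = 0$ up to second order. On the right, the only new work is $(a+b)^{2H} = a^{2H} + 2H a^{2H-1}b + O_H(a^{2H-2}b^2)$ for $b \leq a$. For $J_1$, since $b \leq a \leq a + \rho$, Taylor's theorem with Lagrange remainder gives
\begin{equation*}
(a+b+\rho)^{H-1/2} = (a+\rho)^{H-1/2} + \bigl(H-\tfrac{1}{2}\bigr)b(a+\rho)^{H-3/2} + R(\rho, b), \quad |R(\rho, b)| \lesssim_H b^2(a+\rho)^{H-5/2},
\end{equation*}
uniformly in $\rho$. Integrating against $[(a+\rho)^{H-1/2} - \rho^{H-1/2}]$ and rescaling $\rho = a\sigma$ shows that each surviving $\rho$-integral factors as a dimensionless constant times an explicit power of $a$, yielding
\begin{equation*}
J_1(a, b) = J_1(1, 0)\, a^{2H} + \bigl(H-\tfrac{1}{2}\bigr)I(1)\, a^{2H-1}b + O_H(a^{2H-2}b^2),
\end{equation*}
where $I(1) := \int_0^\infty [(1+\sigma)^{H-1/2} - \sigma^{H-1/2}](1+\sigma)^{H-3/2}\dd\sigma$. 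Evaluating the exact identity at $b = 0$ and using $J(a, 0) = a^{2H}/(2H)$ pins down $J_1(1, 0) = c_H - \tfrac{1}{2H}$. Comparing coefficients of $a^{2H-1}b$ then forces $c_H H - (H - \tfrac{1}{2}) I(1) = \tfrac{1}{2}$; this identity can be checked directly by the substitution $\sigma = (1-y)/y$ turning $I(1)$ into a Beta integral, or more cleanly by an independent small-$b$ expansion of $J(1, b)$ that splits $\int_0^1 = \int_0^b + \int_b^1$ and Taylor-expands in the outer piece, exhibiting the coefficient $\tfrac{1}{2}$ explicitly.

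The main technical obstacle is justifying the integral manipulations: one must verify absolute convergence of the dimensionless integrals, in particular $\int_0^\infty |(1+\sigma)^{H-1/2} - \sigma^{H-1/2}|(1+\sigma)^{H-5/2}\dd\sigma < \infty$ for $H \in (0, 1)$. Near $\sigma = 0$, the factor is bounded by $1 + \sigma^{H-1/2}$, integrable for $H > -\tfrac{1}{2}$; as $\sigma \to \infty$, the mean value theorem gives $|(1+\sigma)^{H-1/2} - \sigma^{H-1/2}| \lesssim_H \sigma^{H-3/2}$, so the integrand decays like $\sigma^{2H-4}$, integrable for $H < \tfrac{3}{2}$. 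With these convergence checks secured, the covariance identity combined with the Taylor bookkeeping and the two constant identifications delivers the claim.
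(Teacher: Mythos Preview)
Your proposal is correct and follows essentially the same strategy as the paper's proof: both express the target integral via the fractional covariance identity as an explicit piece minus a residual integral $J_1$, Taylor expand $J_1$ in $t-s$, and invoke the same Beta-type identity $(H-\tfrac12)I(1)=Hc_H-\tfrac12$ (which the paper cites from Picard) to pin down the linear coefficient. Your choice to center the covariance at $v$ rather than at $0$ is a minor streamlining---it spares you the extra $\int_0^v(s-r)^{H-1/2}(t-r)^{H-1/2}\,\mathrm{d}r$ term and the attendant cancellation of $s^{2H}$, $t^{2H}$ contributions that the paper carries through---but the underlying mechanism is identical.
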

\begin{proof}
  See Appendix~\ref{app:technical}.
\end{proof}

We apply Theorem~\ref{thm:generalized_stochastic_sewing} to construct a stochastic integral
\begin{equation*}
  \int_0^T f(B_s) \dd B_s, \quad H \in (\frac{1}{2}, 1)
\end{equation*}
as the limit of Riemann type approximations.
 An advantage of the stochastic sewing lemma is that
we do not need any regularity of $f$.
We denote by $L_{\infty}(\R^d,\R^d)$ the space of bounded measurable maps from $\R^d$ to $\R^d$.
We write
  \begin{equation*}
    x \cdot y \defby \sum_{i=1}^d x^i y^i,\quad x=(x^i)_{i=1}^d, \,\, y=(y^i)_{i=1}^d
  \end{equation*}
  for the inner product of $\R^d$.
\begin{proposition}\label{prop:integral_H_greater_than_one_half}
  Let $H \in (1/2, 1)$ and $f \in L_{\infty}(\R^d, \R^d)$. Then, for any $\tau \in [0, T]$ and $m \in [2, \infty)$, the sequence
  \begin{equation*}
    \sum_{[s, t] \in \pi} f(B_s) \cdot (B_t - B_s), \quad \text{where $\pi$ is a partition of $[0, \tau]$},
  \end{equation*}
  converges in $L_m(\P)$ for every $m < \infty$ as $\abs{\pi} \to 0$.
  Furthermore, if we denote the limit by $\int_0^{\tau} f(B_r)  \dd B_r$ and if we write
  \begin{equation*}
    \int_s^t f(B_r)  \dd B_r \defby \int_0^t f(B_r)  \dd B_r - \int_0^s f(B_r)  \dd B_r,
  \end{equation*}
  then for every $0 \leq s < t \leq T$,
  \begin{equation*}
    \norm{\int_s^t f(B_r)  \dd B_r}_{L_m(\P)}
    \lesssim_{d, H, m} \norm{f}_{L_{\infty}(\R^d)} \abs{t-s}^H
  \end{equation*}
\end{proposition}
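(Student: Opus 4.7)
The plan is to apply Theorem~\ref{thm:generalized_stochastic_sewing} to the germ $A_{s,t} := f(B_s) \cdot (B_t - B_s)$. A direct computation gives $\delta A_{s,u,t} = (f(B_s) - f(B_u)) \cdot (B_t - B_u)$, and $A_{s,t}$ is manifestly $\cF_t$-measurable with $A_{s,s}=0$. The condition \eqref{eq:sewing_one_half} is easy: combining the pointwise bound $\abs{f(B_s)-f(B_u)} \leq 2\norm{f}_{L_\infty(\R^d)}$ with the Gaussian estimate $\norm{B_t - B_u}_{L_m(\P)} \lesssim_{d,H,m} (t-s)^H$ yields $\norm{\delta A_{s,u,t}}_{L_m(\P)} \lesssim_{d,H,m} \norm{f}_{L_\infty(\R^d)} (t-s)^H$, so $\beta_2 = H$ and $\Gamma_2 \lesssim \norm{f}_{L_\infty(\R^d)}$; in particular $\beta_2 > 1/2$ by the standing assumption $H > 1/2$.

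The main work is condition \eqref{eq:sewing_regularization}. For $v < s < u < t$ with $M(t-s) \leq s-v$, the van Ness-Mandelbrot representation \eqref{eq:fbm_def} lets me decompose, for $w \geq v$,
\[
  B_w = \mu_w + X_w, \quad \mu_w := B(0) + \int_{-\infty}^v K(w,r)\, dW_r, \quad X_w := \int_v^w K(w,r)\, dW_r,
\]
where $\mu_\cdot$ is $\cF_v$-measurable and $(X_w)_{w\geq v}$ is centred Gaussian and independent of $\cF_v$. Writing $\widetilde\expect$ for integration against the law of $X$ alone (treating $\mu$ as fixed) and $P_w f(y) := \widetilde\expect[f(y+X_w)]$ for the corresponding Gaussian smoothing (with variance $\sigma_w^2 = (w-v)^{2H}/(2H)$), conditioning on $\cF_v$ yields $\expect[\delta A_{s,u,t}\mid \cF_v] = \Phi_1 + \Phi_2$, where
\[
  \Phi_1 := \widetilde\expect[(f(\mu_s+X_s) - f(\mu_u+X_u))\cdot(X_t - X_u)]\big|_{\mu=\mu(\omega)}, \quad \Phi_2 := (\mu_t - \mu_u) \cdot [(P_s f)(\mu_s) - (P_u f)(\mu_u)].
\]
Gaussian integration by parts (Stein's identity), applied componentwise, rewrites
\[
  \Phi_1 = [\rho_{s,t}-\rho_{s,u}](\nabla \cdot P_s f)(\mu_s) - [\rho_{u,t}-\rho_{u,u}](\nabla \cdot P_u f)(\mu_u),
\]
with the scalar covariance $\rho_{a,b} := \int_v^{a\wedge b} K(a,r)K(b,r)\, dr$ expanded to second order via Lemma~\ref{lem:kernel_correlation}.

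The supporting estimates are the heat-semigroup identity $\partial_w P_w f = \tfrac12(w-v)^{2H-1}\Delta P_w f$, which gives $\abs{\nabla P_w f} \lesssim \norm{f}_\infty (w-v)^{-H}$ and the increment bound $\abs{\nabla(P_u - P_s)f} \lesssim \norm{f}_\infty (u-s)(s-v)^{-1-H}$, together with the Gaussian moment estimate $\norm{\mu_t-\mu_u}_{L_m(\P)} \lesssim (t-u)(s-v)^{H-1}$ (and analogously for $\mu_s-\mu_u$), obtained from a direct variance computation on $\int_{-\infty}^v [K(t,r)-K(u,r)]\, dW_r$. Rewriting $\Phi_1$ as $[\rho_{s,t}-\rho_{s,u}][(\nabla\cdot P_s f)(\mu_s) - (\nabla\cdot P_u f)(\mu_u)] + [(\rho_{s,t}-\rho_{s,u})-(\rho_{u,t}-\rho_{u,u})](\nabla\cdot P_u f)(\mu_u)$ pairs each slow variation in the covariances with one in the mollified derivatives, and bounding factor-by-factor (and doing the same for $\Phi_2$) yields
\[
  \norm{\expect[\delta A_{s,u,t}\mid \cF_v]}_{L_m(\P)} \lesssim_{d,H,m} \norm{f}_\infty \Bigl[\frac{(t-s)^2}{(s-v)^{2-H}} + \frac{(t-s)^{2H}}{(s-v)^H}\Bigr].
\]
Since $t-s \leq s-v$, the second summand dominates, so \eqref{eq:sewing_regularization} holds with $\beta_1 = 2H$, $\alpha = H$, $\Gamma_1 \lesssim \norm{f}_\infty$. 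For $H > 1/2$ one has $\beta_1 = 2H > 1$ and $\beta_1 - \alpha = H > 1/2$, verifying \eqref{eq:alpha_and_beta}. Theorem~\ref{thm:generalized_stochastic_sewing} then furnishes a unique adapted $L_m(\P)$-process $\cA$ with $\cA_0 = 0$, which we denote $\int_0^\cdot f(B_r)\, dB_r$, and to which the Riemann sums converge in $L_m(\P)$; the Hölder estimate follows from $\norm{A_{s,t}}_{L_m(\P)} \lesssim \norm{f}_\infty (t-s)^H$ together with \eqref{eq:curl_A_one_half}, which contributes the same order $(t-s)^{\beta_1-\alpha} = (t-s)^H$.

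The hard part is the cancellation producing $\beta_1 > 1$: naive term-by-term bounds on $\Phi_1$ and $\Phi_2$ yield only the linear estimate $\norm{\cdot}_{L_m} \lesssim \norm{f}_\infty (t-s)(s-v)^{H-1}$, which corresponds to $\beta_1 = 1$ and fails \eqref{eq:alpha_and_beta}. The gain to $\beta_1 = 2H$ is genuinely a two-scale phenomenon driven by the ratio $(u-s)/(s-v) \leq M^{-1}$: both the covariances $\rho_{w,\cdot}$ and the mollified divergences $\nabla \cdot P_w f(\mu_w)$ vary only slowly on $[s,u]$ compared with $[v,s]$, and the second-order asymptotics from Lemma~\ref{lem:kernel_correlation} (combined with the heat-semigroup identity) are exactly what extracts this extra smallness.
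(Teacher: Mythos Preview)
Your proposal is correct and follows essentially the same approach as the paper: the same germ $A_{s,t}=f(B_s)\cdot B_{s,t}$, the same Mandelbrot--van Ness splitting $B_w=\mu_w+X_w$ relative to $\cF_v$, the same covariance asymptotics from Lemma~\ref{lem:kernel_correlation}, and the same final parameters $\alpha=H$, $\beta_1=2H$, $\beta_2=H$, $M=1$ in Theorem~\ref{thm:generalized_stochastic_sewing}. The only cosmetic difference is that you package the Gaussian computation via Stein's identity and the heat-semigroup relation $\partial_w P_w f=\tfrac12(w-v)^{2H-1}\Delta P_w f$, whereas the paper uses a first-order Wiener chaos expansion and the explicit functions $F(m,\sigma)=\expect[f(m+\sigma X)]$ and $G_i(m,\sigma)$; your $\Phi_2$ is the paper's $A^0_{s,u,t}$ and your $\Phi_1$ is $\sum_i A^i_{s,u,t}$, with $(P_wf)(\mu_w)=a_0(w)$ and $(\nabla\!\cdot P_wf)(\mu_w)=\sum_i a_i^i(w)$.
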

\begin{remark}
  We can replace $f(B_s)$ by $f(B_u)$ for any $u \in [s, t]$. It is well known that the sums converge to the Young integral
  if $f \in C^{\gamma} (\R)$ with $\gamma > H^{-1} (1 - H)$.
  Yaskov \cite[Theorem~3.7]{yaskov18} proves that the sums converge in some $L_p(\P)$-space if $f$ is of bounded variation.
\end{remark}
\begin{proof}
  We will not write down dependence on $d$, $H$ and $m$.
  \tymchange{The filtration $(\mathcal{F}_t)_{t \in \mathbb{R}}$ is generated by the Brownian motion $W$ appearing in 
  the Mandelbrot--van Ness representation \eqref{eq:fbm_def}.}
  We will apply Theorem~\ref{thm:generalized_stochastic_sewing} with $A_{s, t} \defby f(B_s) \cdot (B_t - B_s)$.
  Let $m \geq 2$.
  We have
  \begin{equation*}
    \norm{A_{s, t}}_{L_m(\P)} \lesssim \norm{f}_{L_{\infty}} \abs{t - s}^H.
  \end{equation*}
  To estimate conditional expectations, let $0 \leq v < s < t$ be such that $t-s \leq s-v$ and set
  \begin{equation*}
    Y_s \defby \int_{-\infty}^v K(s, r) \dd W_r, \quad
    \tilde{B}_s \defby \int_v^s K(s, r) \dd W_r.
  \end{equation*}
  We write $y_s \defby Y_s$, if conditioned under $\cF_v$.
  Namely, we write for instance
  \begin{equation*}
     \expect[g(y_s, \tilde{B}_s)] \defby \expect[g(Y_s, \tilde{B}_s) \vert \cF_v ] = \expect[g(y,\tilde B_s)]|_{y=Y_s}.
  \end{equation*}
  We are going to compute
    $\expect[A_{s, t} \vert \cF_v]$.
  Conditionally on $\cF_v$, we have the Wiener chaos expansion \tymchange{\cite[Theorem~1.1.1]{nualart06}
  \begin{equation*}
    f(B_s) = f(y_s + \tilde{B}_s) = a_0(s) + \sum_{i=1}^d a_i(s) \tilde{B}^i_s + \tilde{B}^{\perp}_s,
  \end{equation*}
  where $\tilde{B}^{\perp}_s$ is orthogonal in $L_2(\mathbb{P})$ to the subspace spanned by the constant $1$ and 
  \begin{align*}
    (\tilde{B}^i_r)_{i=1, \ldots, d; r \geq v}.
  \end{align*}
  }
  Note that
  \begin{align*}
    a_0(s) &= \expect[f(y_s + \tilde{B}_s)],  \\
    a_i(s) &= \expect[(\tilde{B}_s^i)^2]^{-1} \expect[f(y_s + \tilde{B}_s) \tilde{B}_s^i]
    \overset{\text{Lem.~\ref{lem:kernel_correlation}}}{=} 2H (s-v)^{-2H}
    \expect[f(y_s + \tilde{B}_s) \tilde{B}_s^i].
  \end{align*}
  Then, by the orthogonality of the Wiener chaos decomposition,
  \begin{equation*}
    \expect[A_{s, t} \vert \cF_v]
    = a_0(s) \cdot Y_{s,t} + \sum_{i=1}^d a_i(s) \cdot \expect[\tilde{B_s^i} \tilde{B}_{s, t}].
  \end{equation*}
  Hence, for $u \in (s, t)$,
  \begin{equation*}
    \expect[\delta A_{s, u, t} \vert \cF_v] = A^0_{s, u, t} + \sum_{i=1}^d A^i_{s, u, t},
  \end{equation*}
  where
  \begin{align*}
    A^0_{s, u, t} \defby& a_0(s) \cdot Y_{s, t} - a_0(s) \cdot Y_{s, u} - a_0(u) \cdot Y_{u, t} = (a_0(s) - a_0(u)) \cdot Y_{u, t}, \\
    A^i_{s, u, t} \defby& a_i(s) \cdot \expect[\tilde{B}_s^i \tilde{B}_{s, t}] -  a_i(s) \cdot \expect[\tilde{B}_s^i \tilde{B}_{s, u}]
    - a_i(u) \cdot \expect[\tilde{B}_u^i \tilde{B}_{u, t}] \\
    =& [a_i(s) \cdot \bm{e}_i]  \expect[\tilde{B}_s^i \tilde{B}_{s, t}^i] -  [a_i(s) \cdot \bm{e}_i] \expect[\tilde{B}_s^i \tilde{B}_{s, u}^i]
    - [a_i(u) \cdot \bm{e}_i] \expect[\tilde{B}_u^i \tilde{B}_{u, t}^i].
  \end{align*}
  Here $\bm{e}_i$ is the $i$th unit vector of $\mathbb{R}^d$.
  We first estimate $A^0_{s, u, t}$, for which we begin with estimating $a_0(s) - a_0(u)$.  We set
  \begin{equation*}
    F(m, \sigma) \defby \expect[f(m + \sigma X)], \quad m \in \R^d, \,\, \sigma \in (0, \infty),
  \end{equation*}
  where $X$ has the standard normal distribution in $\R^d$. Note that
  \begin{equation*}
    a_0(s) = F(Y_s, (2H)^{-\frac{1}{2}}(s-v)^H)
  \end{equation*}
  and similarly for $a_0(u)$.
  we have
  \begin{align*}
    \partial_{m^i} F(m, \sigma) &= \frac{1}{(2 \pi)^{\frac{d}{2}} \sigma^{d+2}} \int_{\R^d} x^i e^{-\frac{\abs{x}^2}{2 \sigma^2}} f(x + m) \dd x, \\
    \partial_{\sigma} F(m, \sigma) &= \frac{-d}{(2 \pi)^{\frac{d}{2}} \sigma^{d+1}} \int_{\R^d} f(m + x) e^{-\frac{\abs{x}^2}{2 \sigma^2}} \dd x
    + \frac{1}{(2 \pi)^{\frac{d}{2}} \sigma^{d+3}} \int_{\R^d} \abs{x}^2 f(m + x) e^{-\frac{\abs{x}^2}{2 \sigma^2}} \dd x.
  \end{align*}
  Therefore,
  \begin{equation*}
    \abs{\partial_m F(m, \sigma)} + \abs{\partial_{\sigma} F(m, \sigma)} \lesssim \norm{f}_{L_{\infty}(\R^d)} \sigma^{-1}.
  \end{equation*}
  This yields
  \begin{align*}
    \abs{a_0(s) - a_0(u)} \leq& \abs{F(Y_s, (2H)^{-\frac{1}{2}}(s - v)^H) - F(Y_u, (2H)^{-\frac{1}{2}}(s - v)^H)} \\
    &+ \abs{F(Y_u, (2H)^{-\frac{1}{2}}(s - v)^H) - F(Y_u, (2H)^{-\frac{1}{2}}(u - v)^H)} \\
    \lesssim& \norm{f}_{L_{\infty}(\R^d)}(s-v)^{-H} \abs{Y_{s, u}} + \norm{f}_{L_{\infty}(\R^d)} (s-v)^{-H} (\abs{u-v}^H - \abs{s - v}^H) \\
    \lesssim& \norm{f}_{L_{\infty}(\R^d)}(s-v)^{-H} \abs{Y_{s, u}} +  \norm{f}_{L_{\infty}(\R^d)}(s-v)^{-1}(t - s).
  \end{align*}
  Therefore,
  \begin{equation}\label{eq:young_A_0_conditional}
    \abs{A^0_{s, u, t}} \lesssim \norm{f}_{L_{\infty}(\R^d)}(s-v)^{-H} \abs{Y_{s, u}} \abs{Y_{u, t}}
    + \norm{f}_{L_{\infty}(\R^d)} (s-v)^{-1}(t - s) \abs{Y_{u, t}}.
  \end{equation}
  The random variable $Y_{s, u}$ is Gaussian and
  \begin{align}
    \expect[\abs{Y_{s, u}}^2] =& d\int_{-\infty}^v (K(s, r) - K(u, r))^2 \dd r
    = d \int_{s - v}^{\infty} ((u - s + r)^{H- \frac{1}{2}} - r^{H-\frac{1}{2}})^2 \dd r \notag \\
    \lesssim& (u - s)^2 \int_{s - v}^{\infty} r^{2H - 3} \dd r
    \label{eq:estimate_of_Y_s_u}
    \lesssim (s - v)^{2H - 2} (u - s)^2.
  \end{align}
  We have a similar estimate for $Y_{u, t}$. Therefore,
  \begin{equation*}
    \norm{A^0_{s, u, t}}_{L_m(\P)} \lesssim \norm{f}_{L_{\infty}(\R^d)}(s - v)^{H - 2} (t - s)^2 \quad \text{if } t-s \leq v - s.
  \end{equation*}

  Now we move to estimate $A^i_{s, u, t}$. By Lemma~\ref{lem:kernel_correlation}, we have
  \begin{align*}
    \expect[\tilde{B}_s^i \tilde{B}_{s, t}^i]
    &=\int_v^s K(s, r) K(t, r) \dd r - \int_v^s K(s, r) K(s, r) \dd r \\
    &= \frac{1}{2} (s - v)^{2H - 1}(t-s) + O((t-s)^{2H}).
  \end{align*}
  Therefore, if we write $a_i^i(s) \defby a_i(s) \cdot \bm{e}_i$,
  \begin{multline*}
     A^i_{s, u, t} = \frac{1}{2} \big[a_i^i(s)(s-v)^{2H - 1} - a_i^i(u)(u-v)^{2H - 1} \big](t - u)  \\
    + O((\abs{a_i^i(s)} + \abs{a_i^i(u)}) \abs{t-s}^{2H}).
  \end{multline*}
  If we set
  \begin{equation*}
    G_i(m, \sigma) \defby \sigma^{-1}
    \expect[f^i(m + \sigma X) X^i], \quad m \in \R^d, \,\, \sigma \in (0, \infty),
  \end{equation*}
  then $a_i^i(s) =  G_i(Y_s, (2H)^{-\frac{1}{2}} (s - v)^H)$ and similarly for $a_i^i(u)$.
  Since
  \begin{equation*}
    G_i(m, \sigma)
    = (2 \pi)^{-\frac{d}{2}} \sigma^{-d-2}
    \int_{\R^d} f^i(y) (y^i - m^i) e^{-\frac{\abs{y-m}^2}{2 \sigma^2}} \dd y,
  \end{equation*}
  we have
  \begin{equation*}
    (2 \pi)^{\frac{d}{2}} \sigma^2 \partial_{m^j} G_i(m, \sigma)
    =  \int_{\R^d} f^i(m + \sigma x) [- \delta_{ij} + x^i x^j] e^{-\frac{\abs{x}^2}{2}} \dd x
  \end{equation*}
  \begin{equation*}
    (2 \pi)^{\frac{d}{2}} \sigma^2 \partial_{\sigma} G_i(m, \sigma)
    =  \int_{\R^d} f^i(m + \sigma x) x^i [- (d+2) + \abs{x}^2] e^{-\frac{\abs{x}^2}{2}} \dd x.
  \end{equation*}
  Therefore,
  \begin{equation*}
    \abs{G_i(m, \sigma)} \lesssim \norm{f}_{L_{\infty}(\R^d)} \sigma^{-1},
  \end{equation*}
  \begin{equation*}
    \abs{\partial_m G_i(m, \sigma)} \lesssim \norm{f}_{L_{\infty}(\R^d)} \sigma^{-2},
    \quad \abs{\partial_{\sigma} G_i(m, \sigma)} \lesssim \norm{f}_{L_{\infty}(\R^d)} \sigma^{-2}
  \end{equation*}
  and thus
  \begin{equation*}
    \abs{a^i_i(s)} \lesssim \norm{f}_{L_{\infty}(\R^d)} (s-v)^{-H},
  \end{equation*}
  \begin{align*}
    \abs{a^i_i(s) - a^i_i(u)}
    &\lesssim \norm{f}_{L_{\infty}(\R^d)} (s-v)^{-2H} \big( \abs{Y_{s,u}} + (u-v)^H - (s-v)^H \big) \\
    &\lesssim \norm{f}_{L_{\infty}(\R^d)} (s-v)^{-2H} \big( \abs{Y_{s,u}} + (s-v)^{H-1} (u-s) \big).
  \end{align*}
  This yields
  \begin{multline*}
    \abs{A^i_{s, u, t}} \lesssim \norm{f}_{L_{\infty}(\R^d)}
    \big[ (s -v)^{-1} (t-s) \abs{y_{s, u}} +  (s-v)^{H-2}(t-s)^{2} \\
    + (s-v)^{H-2}(t-s)^{2} + (s - v)^{-H}(t - s)^{2H} \big]
  \end{multline*}
  and
  \begin{align}
    \norm{A^i_{s, u, t}}_{L_m(\P)}
    &\lesssim \norm{f}_{L_{\infty}(\R^d)} \big[(s-v)^{H-2}(t-s)^{2}
    + (s - v)^{-H}(t - s)^{2H} \big] \notag \\
    \label{eq:young_A_i_conditional}
    &\lesssim \norm{f}_{L_{\infty}(\R^d)}(s - v)^{-H}(t - s)^{2H}
  \end{align}
  if  $t-s \leq s - v$.

  Therefore, by \eqref{eq:young_A_0_conditional} and \eqref{eq:young_A_i_conditional},
  \begin{equation*}\label{eq:conditional_expectation_young_integral}
    \norm{\expect[\delta A_{s, u, t} \vert \cF_v]}_{L_m(\P)} \lesssim \norm{f}_{L_{\infty}(\R^d)} (s - v)^{-H}(t - s)^{2H}
  \end{equation*}
  if $t-s \leq s - v$.  Hence, $(A_{s, t})$ satisfies the assumption of Theorem~\ref{thm:generalized_stochastic_sewing}
  with
  \begin{equation*}
    \alpha = H, \quad \beta_1 = 2H, \quad \beta_2 = H, \quad M = 1. \qedhere
  \end{equation*}
\end{proof}

Next, we consider the case $H \in (\frac{1}{6}, \frac{1}{2})$. The following result
reproduces \cite[Theorem~3.5]{nourdin12}, with a more elementary proof and with improvement of the regularity of $f$. More precisely, the cited result requires $f \in C^6$ while here $f \in C^\gamma$ with $\gamma > \frac{1}{2H}-1$ is sufficient and thus in particular $f \in C^2$ works for all $H \in (\frac{1}{6}, \frac{1}{2})$.
We denote by $C^{\gamma} (\R^d,\R^d)$ the space of $\gamma$-Hölder maps from $\R^d$ to $\R^d$, with the norm
\begin{equation*}
  \norm{f}_{C^{\gamma} } \defby \norm{f}_{L_{\infty}(\R^d)} +
  \sup_{x \neq y} \frac{\abs{f(x) - f(y)}}{\abs{x-y}^{\gamma}}
\end{equation*}
if $\gamma \in (0, 1)$ and
\begin{equation*}
  \norm{f}_{C^{\gamma} } \defby \norm{f}_{L_{\infty}(\R^d)} +
  \sum_{i=1}^d \norm{\partial_i f}_{C^{\gamma- 1}}
\end{equation*}
if $\gamma \in (1, 2)$.
\begin{proposition}\label{prop:stratonovich_integral}
  Let $H \in (\frac{1}{6}, \frac{1}{2})$, $\gamma > \frac{1}{2H} - 1$ and $f \in C^{\gamma}(\R^d, \R^d)$.
  If $H \leq \frac{1}{4}$ and $d > 1$, assume furthermore that
  \begin{equation}\label{eq:f_is_gradient}
    \partial_i f^j = \partial_j f^i, \quad \forall i, j \in \{1, \ldots, d\}.
  \end{equation}
  Then, for every $m \in [2, \infty)$ and $\tau \in [0, T]$, the family of Stratonovich approximations
  \begin{equation*}
    \sum_{[s, t] \in \pi} \frac{f(B_s) + f(B_t)}{2} \cdot B_{s, t}, \quad
    \text{where $\pi$ is a partition of $[0, \tau]$},
  \end{equation*}
  converges in $L_m(\P)$ as $\abs{\pi} \to 0$.
  Moreover, if we denote the limit by $\int_0^{\tau} f(B_r) \circ \dd B_r$ and if we write
  \begin{equation*}
    \int_s^t f(B_r) \circ \dd B_r
    \defby \int_0^{t} f(B_r) \circ \dd B_r - \int_0^{s} f(B_r) \circ \dd B_r,
  \end{equation*}
  then for every $0 \leq s < t \leq T$ we have
  \begin{equation*}
    \Big\lVert \int_s^t f(B_r) \circ \dd B_r - \frac{f(B_s) + f(B_t)}{2} \cdot B_{s, t}\Big\rVert_{L_m(\P)}
    \lesssim_{d, H, m, \gamma} \norm{f}_{C^{\gamma}} \abs{t-s}^{(\gamma+1)H}.
  \end{equation*}
\end{proposition}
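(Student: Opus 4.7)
The plan is to apply Theorem~\ref{thm:generalized_stochastic_sewing} to the Stratonovich germ
\[
  A_{s,t} \defby \frac{f(B_s) + f(B_t)}{2} \cdot B_{s,t},
\]
reducing the task to verifying the hypotheses \eqref{eq:sewing_one_half} and \eqref{eq:sewing_regularization}.

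For the pathwise bound I would start from the identity
\[
  \delta A_{s,u,t} = \tfrac{1}{2}\big[(f(B_t)-f(B_u))\cdot B_{s,u}+(f(B_s)-f(B_u))\cdot B_{u,t}\big].
\]
When $\gamma \in (0,1]$, a direct Hölder estimate yields $\norm{\delta A_{s,u,t}}_{L_m(\PP)} \lesssim \norm{f}_{C^\gamma}|t-s|^{(1+\gamma)H}$. When $\gamma \in (1,2)$---the relevant range when $H \leq \tfrac{1}{4}$---a first-order Taylor expansion of $f$ around $B_u$ isolates the main pathwise contribution $\tfrac{1}{2}\sum_{i,j}(\partial_j f^i - \partial_i f^j)(B_u)\,B_{u,t}^j B_{s,u}^i$ plus a remainder of size $|t-s|^{(1+\gamma)H}$, and the main term vanishes under the gradient assumption \eqref{eq:f_is_gradient} (automatic when $d=1$). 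Thus in every case $\beta_2 = (1+\gamma)H$ exceeds $\tfrac{1}{2}$ by the hypothesis $\gamma > \tfrac{1}{2H} - 1$.

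For the conditional bound I would mimic the chaos-decomposition scheme of Proposition~\ref{prop:integral_H_greater_than_one_half}. Write $B_r = B(0) + Y_r + \tilde B_r$ with $Y_r = \int_{-\infty}^v K(r,\rho)\,\dd W_\rho$ being $\cF_v$-measurable and $\tilde B_r = \int_v^r K(r,\rho)\,\dd W_\rho$ independent of $\cF_v$, and expand $f(B_r) = \sum_k a_k(r)\tilde B_r^{:k:}$ conditionally on $\cF_v$, each coefficient $a_k(r)$ being a Gaussian mollification at scale $\sigma_r \defby (2H)^{-1/2}(r-v)^H$. Since $B_{s,t}$ sits in the first Wiener chaos, only $|k|\in\{0,1\}$ contribute to $E[A_{s,t}\vert\cF_v]$, giving
\[
  E[A_{s,t}\vert\cF_v] = \tfrac{1}{2}(a_0(s)+a_0(t))\cdot Y_{s,t} + \sum_j \tfrac{1}{2}\big[a_{e_j}^j(s)E[\tilde B_s^j\tilde B_{s,t}^j] + a_{e_j}^j(t)E[\tilde B_t^j\tilde B_{s,t}^j]\big].
\]
Three structural ingredients then combine. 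First, by Lemma~\ref{lem:kernel_correlation} the Stratonovich averaging over $r\in\{s,t\}$ in the second sum cancels the singular $(t-s)^{2H}$-contribution and leaves only $(s-v)^{2H-1}(t-s) + O((s-v)^{2H-2}(t-s)^2)$. Second, a centering argument exploiting $f \in C^\gamma$ sharpens the derivative estimates of the Gaussian mollifications to $|\partial_\sigma F_0| \lesssim \norm{f}_{C^\gamma}\sigma^{\gamma-1}$ (and analogously for the $a_{e_j}^j$), gaining a factor $\sigma^\gamma$ over the $L_\infty$-case of Proposition~\ref{prop:integral_H_greater_than_one_half}. Third, the gradient condition ensures $\partial_j F_0^i = \partial_i F_0^j$, which annihilates the otherwise-dominant antisymmetric $\partial_m F_0$-contribution produced by Taylor-expanding the $a_0(\cdot)\cdot Y_{s,t}$ piece in $Y$. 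Combining these with the regularity bound $\norm{Y_{u,t}}_{L_m(\PP)} \lesssim (s-v)^{H-1}(t-u)$ and differencing yields
\[
  \norm{E[\delta A_{s,u,t}\vert\cF_v]}_{L_m(\PP)} \lesssim \norm{f}_{C^\gamma}(s-v)^{-(2-(1+\gamma)H)}(t-s)^2
\]
under $M(t-s) \leq s-v$ for some $M=M(H,\gamma)$, verifying \eqref{eq:sewing_regularization} with $\beta_1 = 2$ and $\alpha = 2-(1+\gamma)H$, so that $\beta_1 > 1$ and $\beta_1 - \alpha = (1+\gamma)H > \tfrac{1}{2}$ as required by \eqref{eq:alpha_and_beta}.

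With both sewing hypotheses verified, Theorem~\ref{thm:generalized_stochastic_sewing} produces the $L_m(\PP)$-limit $\cA_\tau$ of the Stratonovich sums, which we identify with $\int_0^\tau f(B_r)\circ\dd B_r$; the claimed rate then follows from \eqref{eq:curl_A_one_half}. The main obstacle is the simultaneous bookkeeping of the cancellations in the conditional bound: Stratonovich symmetrization eliminates the $(t-s)^{2H}$-singularity, the gradient condition annihilates the antisymmetric $\partial_m F_0$-contribution when $H \leq \tfrac{1}{4}$, and the centering-based $\sigma^{\gamma-1}$-bounds convert the regularity of $f$ into the extra $(s-v)^{\gamma H}$-factor; dropping any one of these mechanisms causes $\beta_1 - \alpha > \tfrac{1}{2}$ to fail for some portion of the range $H > \tfrac{1}{6}$.
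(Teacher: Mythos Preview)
Your approach matches the paper's: apply Theorem~\ref{thm:generalized_stochastic_sewing} to the Stratonovich germ, obtain $\beta_2=(1+\gamma)H$ from the H\"older/Taylor expansion of $\delta A$ (using \eqref{eq:f_is_gradient} to cancel the antisymmetric first-order piece when $H\le\tfrac14$), and estimate $\expect[\delta A_{s,u,t}\vert\cF_v]$ via the zeroth- and first-order chaos coefficients exactly as in Proposition~\ref{prop:integral_H_greater_than_one_half}.

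One inaccuracy in your exponents: the Stratonovich symmetrization does \emph{not} fully kill the $(t-s)^{2H}$-contribution in the first-chaos part. In $a_j^j(s)\,\expect[\tilde B_s^j\tilde B_{s,t}^j]+a_j^j(t)\,\expect[\tilde B_t^j\tilde B_{s,t}^j]$ the two $\pm\tfrac{c_H}{2}(t-s)^{2H}$ pieces carry the distinct coefficients $a_j^j(s)$ and $a_j^j(t)$, so a residual $\tfrac{c_H}{2}(a_j^j(t)-a_j^j(s))(t-s)^{2H}$ survives; after $\delta$-differencing the paper only obtains $\norm{D^i_{s,u,t}}_{L_m(\P)}\lesssim (s-v)^{(\gamma-1)H-1}(t-s)^{1+2H}$, which for $H<\tfrac12$ is strictly larger than your claimed $(s-v)^{(1+\gamma)H-2}(t-s)^2$. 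This is harmless for the conclusion, however: the paper's parameters $\alpha=1-(\gamma-1)H$, $\beta_1=1+2H$ still give $\beta_1-\alpha=(1+\gamma)H>\tfrac12$, so the sewing hypotheses \eqref{eq:alpha_and_beta} hold and the rest of your outline goes through verbatim.
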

\begin{proof}
  We will not write down dependence on $d$, $H$, $m$ and $\gamma$.
  \tymchange{The filtration $(\mathcal{F}_t)_{t \in \mathbb{R}}$ is generated by the Brownian motion $W$ appearing in 
  the Mandelbrot--van Ness representation \eqref{eq:fbm_def}.}
  We can assume
  \begin{equation*}
    \gamma < \indic_{\{H > \frac{1}{4} \}} + 2  \indic_{\{H \leq \frac{1}{4} \}}.
  \end{equation*}
  We will apply Theorem~\ref{thm:generalized_stochastic_sewing} with
  \begin{equation*}
    A_{s, t} \defby (f(B_s) + f(B_t)) \cdot B_{s, t}.
  \end{equation*}
  We first claim
  \begin{equation}\label{eq:Stratonovich_A_s_u_t_one_half}
    \norm{\delta A_{s, u, t}}_{L_m(\P)} \lesssim \norm{f}_{C^{\gamma} } \abs{t-s}^{(\gamma+1)H}.
  \end{equation}
  Observe
  \begin{equation*}
    \delta A_{s, u, t} = f(B)_{u, t} \cdot B_{s, u} + f(B)_{u, s} \cdot B_{u, t}.
  \end{equation*}
  If $H > \frac{1}{4}$, the claim \eqref{eq:Stratonovich_A_s_u_t_one_half} follows from the estimates
  \begin{equation*}
    \abs{f(B)_{u, t}} \leq \norm{f}_{C^{\gamma} } \abs{B_{u, t}}^{\gamma}, \quad
    \abs{f(B)_{u, s}} \leq \norm{f}_{C^{\gamma} } \abs{B_{u, s}}^{\gamma}.
  \end{equation*}
  If $H \leq \frac{1}{4}$, then $\gamma > 1$ and we have
  \begin{equation*}
    \delta A_{s, u, t} = \Big(f(B)_{u, t} - \sum_{j=1}^d \partial_j f(B_u) B_{u, t}^j\Big) \cdot B_{s, u}
    + \Big(f(B)_{u, s} - \sum_{j=1}^d \partial_j f(B_u) B_{u, s}^j \Big) \cdot B_{u, t},
  \end{equation*}
  where \eqref{eq:f_is_gradient} is used.
  Then, the claim \eqref{eq:Stratonovich_A_s_u_t_one_half} follows again from the Hölder estimate of $f$.
  Note that the condition $\gamma > \frac{1}{2H} - 1$ is equivalent to $(\gamma + 1) H > \frac{1}{2}$.

  The rest of the proof consists of estimating the conditional expectation
  $\expect[\delta A_{s, u, t} \vert \cF_v]$.
  Let
  $t -s \leq s - v$.
  We will use the same notation as in the proof of Proposition~\ref{prop:integral_H_greater_than_one_half}.
  We have
  \begin{equation*}
    \expect[\delta A_{s, u, t} \vert \cF_v]
    = D^0_{s, u, t} + \sum_{i=1}^d D^i_{s, u, t},
  \end{equation*}
  where
  \begin{align}
    D^0_{s, u, t} \defby&
    (a_0(s) + a_0(t)) \cdot Y_{s, t} - (a_0(s) + a_0(u)) \cdot Y_{s, u} - (a_0(u) + a_0(t)) \cdot Y_{u, t} \notag \\
    =& (a_0(t) - a_0(u)) \cdot Y_{s, u} + (a_0(s) - a_0(u)) \cdot Y_{u, t}
    \label{eq:D_0_simple}.
  \end{align}
  and
  \begin{multline*}
    D^i_{s, u, t} \defby \expect[(a_i^i(s) \tilde{B}_s^i + a_i^i(t) \tilde{B}_t^i) \tilde{B}_{s, t}^i \vert \cF_v] \\
    -\expect[(a_i^i(s) \tilde{B}_s^i + a_i^i(u) \tilde{B}_u^i) \tilde{B}_{s, u}^i \vert \cF_v]
    -\expect[(a_i^i(u) \tilde{B}_u^i + a_i^i(t) \tilde{B}_t^i) \tilde{B}_{u, t}^i \vert \cF_v]
  \end{multline*}

  We first estimate $D^0_{s, u, t}$.
  Suppose that $H > \frac{1}{4}$.
  Recall
  \begin{align*}
    \partial_{m^i} F(m, \sigma) &= \frac{1}{(2 \pi)^{\frac{d}{2}} \sigma^{d+2}} \int_{\R^d} x^i e^{-\frac{\abs{x}^2}{2 \sigma^2}} [f(x + m) - f(m)] \dd x, \\
    \partial_{\sigma} F(m, \sigma) &= \frac{-d}{(2 \pi)^{\frac{d}{2}} \sigma^{d+1}} \int_{\R^d} [f(m + x) - f(m)]
    e^{-\frac{\abs{x}^2}{2 \sigma^2}} \dd x \\
    &\hspace{2cm}+ \frac{1}{(2 \pi)^{\frac{d}{2}} \sigma^{d+3}} \int_{\R^d} \abs{x}^2 [f(m + x) - f(m)] e^{-\frac{\abs{x}^2}{2 \sigma^2}} \dd x.
  \end{align*}
  Therefore,
  \begin{equation*}
    \abs{\partial_{m^i} F(m, \sigma)} + \abs{\partial_{\sigma} F(m, \sigma)} \lesssim \norm{f}_{C^{\gamma} } \sigma^{\gamma-1}.
  \end{equation*}
  This yields
  \begin{equation}\label{eq:estimate_of_abs_D_0}
    \abs{D^0_{s, u, t}}
    \lesssim \norm{f}_{C^{\gamma} } \big[ (s-v)^{(\gamma-1)H} \abs{Y_{s,u}}\abs{Y_{u,t}}
    + (s-v)^{\gamma H - 1} (t-s) (\abs{Y_{s, u}} + \abs{Y_{u, t}}) \big].
  \end{equation}
  Therefore, by \eqref{eq:estimate_of_Y_s_u},
  \begin{equation}\label{eq:estimate_D_0_s_u_t}
    \norm{D^0_{s, u, t}}_{L_m(\P)} \lesssim \norm{f}_{C^{\gamma} } (s - v)^{(\gamma + 1)H - 2} (t - s)^2.
  \end{equation}
  Now suppose that $H \leq \frac{1}{4}$. To simplify notation, we write $I(m, \sigma) \defby F(m, (2H)^{-\frac{1}{2}} \sigma)$.
  Since \eqref{eq:f_is_gradient} gives $\partial_{m^i} I^j = \partial_{m^j} I^i$ for every $i, j$, we have
  \begin{align*}
    D^0_{s, u, t} =& [I(Y_s, (u - v)^H) - I(Y_u, (u - v)^H) - \sum_{i=1}^d \partial_{m^i} I(Y_u, (u-v)^H) Y_{u, s}^i] \cdot Y_{u, t} \\
    &+ [I(Y_t, (u - v)^H) - I(Y_u, (u - v)^H) - \sum_{i=1}^d \partial_{m^i} I(Y_u, (u-v)^H) Y_{u, t}^i] \cdot Y_{s, u} \\
    &+ [I(Y_s, (s-v)^H) - I(Y_s, (u - v)^H)] \cdot Y_{u, t} \\
    &+ [I(Y_t, (t-v)^H) - I(Y_t, (u - v)^H)] \cdot Y_{s, u}.
  \end{align*}
  Since
  \begin{equation*}
    \partial_{m^i} \partial_{m^j} F(m, \sigma) = \frac{1}{(2 \pi)^{\frac{d}{2}} \sigma^{d+2}}
    \int_{\R^d} x^i e^{-\frac{\abs{x}^2}{2 \sigma^2}} [\partial_jf(x + m) - \partial_jf(m)] \dd x,
  \end{equation*}
  we have
  \begin{multline*}
    \abs{I(Y_s, (u - v)^H) - I(Y_u, (u - v)^H) - \sum_{i=1}^d \partial_{m^i} I(Y_u, (u-v)^H) Y_{u, s}^i} \\
    \lesssim \norm{f}_{C^{\gamma} } (s - v)^{(\gamma - 2)H} \abs{Y_{s, u}}^2.
  \end{multline*}
  Notice
  \begin{multline*}
    \partial_{\sigma} F(m, \sigma) = \frac{-d}{(2 \pi)^{\frac{d}{2}} \sigma^{d+1}} \int_{\R^d} [f(m + x) - f(m) - \sum_{i=1}^d \partial_i f(m) x^i]
    e^{-\frac{\abs{x}^2}{2 \sigma^2}} \dd x \\
    + \frac{1}{(2 \pi)^{\frac{d}{2}} \sigma^{d+3}} \int_{\R^d} \abs{x}^2 [f(m + x) - f(m) - \sum_{i=1}^d \partial_i f(m) x^i] e^{-\frac{\abs{x}^2}{2 \sigma^2}} \dd x.
  \end{multline*}
  Therefore,
  \begin{equation*}
    \abs{\partial_{\sigma} F(m, \sigma)} \lesssim \norm{f}_{C^{\gamma} } \sigma^{\gamma - 1}.
  \end{equation*}
  This yields
  \begin{equation*}
    \abs{I(Y_s, (s-v)^H) - I(Y_s, (u - v)^H)} \lesssim \norm{f}_{C^{\gamma} } (s - v)^{\gamma H - 1} (t - s).
  \end{equation*}
  Hence, we obtain the estimate \eqref{eq:estimate_D_0_s_u_t} when $H \leq \frac{1}{4}$.

  We move to estimate $D^i_{s, u, t}$. By using the identity,
  \begin{equation*}
    \expect[(\tilde{B}_a^i + \tilde{B}_b^i) \tilde{B}_{a, b}^i] = \expect[(\tilde{B}_b^i)^2] - \expect[(\tilde{B}_a^i)^2],
  \end{equation*}
  we obtain
  \begin{multline}\label{eq:D_i_simple}
    D^i_{s, u, t} = (a_i^i(t) - a_i^i(u)) \expect[\tilde{B}_t^i \tilde{B}_{s, t}^i]
    + (a_i^i(s) - a_i^i(u)) \expect[\tilde{B}_s^i \tilde{B}_{s, t}^i] \\
    -(a_i^i(s) - a_i^i(u)) \expect[\tilde{B}_s^i \tilde{B}_{s, u}^i]
    -(a_i^i(t) - a_i^i(u)) \expect[\tilde{B}_t^i \tilde{B}_{u, t}^i].
  \end{multline}
  Since the other terms can be estimated similarly, we only estimate
  $(a_i^i(t) - a_i^i(u)) \expect[\tilde{B}_t^i \tilde{B}_{s, t}^i]$. By Lemma~\ref{lem:kernel_correlation},
  \begin{equation*}
    \abs{\expect[\tilde{B}_t \tilde{B}_{s, t}]} \lesssim \abs{t-s}^{2H}.
  \end{equation*}
  Now we estimate $\abs{a_i^i(t) - a_i^i(u)}$.
  Recall $a_i^i(s) =  G_i(Y_s, (2H)^{-\frac{1}{2}} (s - v)^H)$,
  \begin{multline*}
    (2 \pi)^{\frac{d}{2}} \sigma^2 \partial_{m^j} G_i(m, \sigma)
    = - \delta_{ij} \int_{\R^d} [f^i(m + \sigma x) - f^i(m)] e^{-\frac{\abs{x}^2}{2}} \dd x \\
    + \int_{\R^d} [f^i(m + \sigma x) - f^i(m)] x^i x^j e^{-\frac{\abs{x}^2}{2}} \dd x,
  \end{multline*}
  \begin{multline*}
    (2 \pi)^{\frac{d}{2}} \sigma^2 \partial_{\sigma} G(m, \sigma)
    = -(d+2) \int_{\R^d} [f^i(m + \sigma x) - f^i(m)] x^i e^{-\frac{\abs{x}^2}{2}} \dd x \\
    + \int_{\R^d} [f^i(m + \sigma x) - f^i(m)] x^i \abs{x}^2 e^{-\frac{\abs{x}^2}{2}} \dd x.
  \end{multline*}
  If $H \leq \frac{1}{4}$, we can replace $f^i(m + \sigma x) - f^i(m)$ by
  \begin{equation*}
    f^i(m + \sigma x) - f^i(m) - \sum_{k=1}^d \partial_k f^i(m) \sigma x^k.
  \end{equation*}
  Therefore,
  \begin{equation*}
    \abs{\partial_{m^j} G_i(m, \sigma)} + \abs{\partial_{\sigma} G_i(m, \sigma)}
    \lesssim \norm{f}_{C^{\gamma} } \sigma^{\gamma - 2}.
  \end{equation*}
  This yields
  \begin{equation*}
    \abs{a_i^i(t) - a_i^i(u)} \lesssim \norm{f}_{C^{\gamma} } (s - v)^{(\gamma - 2)H}
    (\abs{Y_{u, t}} + (s- v)^{H-1} (t-s))
  \end{equation*}
  and hence
  \begin{equation*}
    \norm{a_i^i(t) - a_i^i(u)}_{L_m(\P)} \lesssim \norm{f}_{C^{\gamma}}
    (s-v)^{(\gamma - 1) H - 1} (t - s).
  \end{equation*}
  Therefore, we obtain
  \begin{equation}\label{eq:estimate_D_i}
    \norm{D^i_{s, u, t}}_{L_m(\P)} \lesssim
    \norm{f}_{C^{\gamma}}
    (s - v)^{(\gamma - 1) H - 1} (t - s)^{1 + 2H}.
  \end{equation}

  By \eqref{eq:estimate_D_0_s_u_t} and \eqref{eq:estimate_D_i}, we conclude
  \begin{align*}
    \norm{\expect[\delta A_{s, u, t} \vert \cF_v]}_{L_m(\P)} &\lesssim
    \norm{f}_{C^{\gamma} } [(s - v)^{(1 + \gamma) H - 2} (t - s)^2 +
    (s - v)^{(\gamma - 1) H - 1} (t - s)^{1 + 2H}] \\
    &\lesssim \norm{f}_{C^{\gamma} } (s - v)^{(\gamma - 1) H - 1} (t - s)^{1 + 2H}
  \end{align*}
  if $t-s \leq s- v$.
  Therefore, we can apply Theorem~\ref{thm:generalized_stochastic_sewing} with
  \begin{equation*}
    \alpha = 1 - (\gamma - 1)H, \quad \beta_1 = 1 + 2H, \quad \beta_2 = (\gamma + 1) H, \quad M = 1. \qedhere
  \end{equation*}
\end{proof}

\section{Local times of fractional Brownian motions}\label{sec:local_time}
In this section, we set $d = 1$ and we are interested in local times of fractional Brownian motions.
In case of a Brownian motion $W$, or more generally semi-martingales
as discussed in {\L}ochowski, Ob{\l}oj, Prömel and Siorpaes \cite{Lochowski21},
there are three major methods to construct its local time.
\begin{enumerate}[leftmargin=*]
  \item \emph{Via occupation measure.} The local time $L_T^W(\cdot)$ of $W$
  is defined as the density with respect to the Lebesgue measure of
  \begin{equation*}
    A \mapsto \int_0^T \indic_A(W_s) \dd s.
  \end{equation*}
  Heuristically,
  \begin{equation*}
    L_T^W(a) = \int_0^T \delta(W_s - a) \dd s,
  \end{equation*}
  where $\delta$ is Dirac's delta function concentrated at $0$.
  \item \emph{Via discretization.}  The local time $L_T^W(a)$ is defined by
  \begin{equation*}
    L_T^W(a) \defby \lim_{\abs{\pi} \to 0} \sum_{[s, t] \in \pi}
    \abs{W_t - a} \indic_{(\min\{W_s, W_t\}, \max\{W_s, W_t\})}(a),
  \end{equation*}
  where $\pi$ is a partition of $[0, T]$ and the convergence is in probability.
  This representation of the local time is often used in the pathwise stochastic calculus,
  see Wuermli~\cite{wuermli80}, Perkowski and Prömel~\cite{perkowski15},
  Davis, Ob{\l}oj and Siorpaes~\cite{Davis18},
  Cont and Perkowski~\cite{cont19} and Kim~\cite{kim_2022}.
  \item \emph{Via numbers of interval crossing.}
  For $n \in \N$, we set $\tau^n_0 \defby 0$ and inductively
  \begin{equation*}
    \tau^n_k \defby \inf\set{t > \tau^n_{k-1} \given W_t \in 2^{-n} \Z \setminus \{W_{\tau^n_{k-1}}\} }.
  \end{equation*}
  Then, the local time $L_T^W(a)$ is defined by
  \begin{multline*}
    L^W_T(a) \defby\\ \lim_{n \to \infty} 2^{-n} \sum_{k \in \Z} \indic_{[\frac{k}{2^n}, \frac{k+1}{2^n})}(a)
    \# \set{\ell \in \N_0 \given \{W_{\tau_\ell^n}, W_{\tau^n_{\ell+1}}\} = \{k2^{-n}, (k+1)2^{-n}\}, \tau^n_{k+1} \leq T},
  \end{multline*}
  where the convergence holds almost surely.
  See the monograph \cite{mörters_peres_2010} for the Brownian motion. For general semi-martingales, see El
  Karoui~\cite{karoui78}, Lemieux~\cite{lemieux83} and \cite{Lochowski21}.
\end{enumerate}

In case of a fractional Brownian motion, the construction of its local time via the method (a) is well-known,
see the survey \cite{geman80} and the monograph \cite{Biagini2008}. In contrast,
there are few results in the literature in which the local time of
a fractional Brownian motion is constructed via the method (b) or (c).
Because of this, the construction of the local time via the method (c) was stated as a conjecture in \cite{cont19}.
We are aware of only two results in this direction.
One is the work \cite{azais90} of Azaïs, who proves Corollary \ref{cor:number_level_crossing} below.
The other is the work \cite{mukeru17} of Mukeru,
who proves that the local time $L_T(a)$ of a fractional Brownian motion with Hurst parameter less than $\frac{1}{2}$
is represented as
\begin{equation*}
  \lim_{n \to \infty} 2^{n(2H - 1)}
  \sum_{k \leq \lfloor T 2^n \rfloor } 2 \abs{B_{k2^{-n}} - a}
  \indic_{\{(B_{k2^{-n}} - a)(B_{(k-1)2^{-n}} - a) < 0 \}}
  \quad \text{a.s.}
\end{equation*}

Our goal in this section is to give new representations of the local times of fractional Brownian motions
in the spirit of the method (b) along deterministic partitions.
The representation in Corollary~\ref{cor:local_time_discretization} is compatible with \cite[Definition~3.1]{cont19}.
\begin{theorem}\label{thm:local_time_upcrossing}
  Let $B$ be an $(\cF_t)$-fractional Brownian motion with Hurst parameter $H \neq \frac{1}{2}$,
  in the sense of Definition~\ref{def:fbm}.
  Let $m \in [2, \infty)$,  $\gamma \in [0, \infty)$ and $a \in \R$.
  If $H > \frac{1}{2}$, assume that
   $m$ satisfies
  \begin{equation}\label{eq:local_time_m_condition}
    \frac{1}{m} > 1 - \frac{1}{2H}.
  \end{equation}
  Then, as $\abs{\pi} \to 0$, where $\pi$ is a partition of $[0, T]$,
  the family of
  \begin{equation*}
    \sum_{[s, t] \in \pi, B_s < a <  B_t} (t-s)^{1 - (1 + \gamma)H} \abs{B_t - B_s}^{\gamma}
  \end{equation*}
  converges in $L_m(\P)$ to $\fc_{H, \gamma} L_T(a)$, where $L_T(a)$ is the local time of $B$ at level $a$ and
  \begin{equation*}
    \fc_{H, \gamma} \defby c_H^{\frac{1 + \gamma}{2}} \int_0^{\infty} x^{\gamma + 1} \frac{e^{-\frac{x^2}{2}}}{\sqrt{2 \pi}} \dd x.
  \end{equation*}
  Furthermore, we have
  \begin{equation}\label{eq:local_time_convergence_L_m}
    \lim_{\abs{\pi} \to 0} \expect\Big[\int_{\R} \Big\lvert\fc_{H, \gamma} L_T(x)
    -  \sum_{[s, t] \in \pi, B_s < x <  B_t} (t-s)^{1 - (1 + \gamma)H} \abs{B_t - B_s}^{\gamma}\Big\rvert^m  \dd x \Big]= 0.
  \end{equation}
\end{theorem}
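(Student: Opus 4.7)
My plan is to apply Corollary~\ref{cor:singular_generalized_stochastic_sewing_lemma} with
\[
A_{s, t} \defby (t-s)^{1-(1+\gamma)H}\, \abs{B_t - B_s}^\gamma\, \indic_{\{B_s < a < B_t\}}
\]
and then identify the resulting process $\cA$ with $\fc_{H, \gamma} L_{\cdot}(a)$ via the uniqueness clause of the corollary. The origin of the constant $\fc_{H, \gamma}$ is a short Gaussian computation: conditionally on $B_s$, $B_t - B_s \approx N(0, c_H (t-s)^{2H})$ to leading order, and the substitution $z = (a - B_s)/(c_H^{1/2} (t-s)^H)$ together with Fubini yields $\expect[A_{s, t} \vert B_s] \approx \fc_{H, \gamma}\, \rho_{B_s}(a)\, (t-s)$, exactly matching the occupation-density representation $L_t(a) = \int_0^t \delta(B_r - a)\dd r$ of the local time.

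For the size hypothesis, the crude bound $\abs{A_{s, t}} \leq (t-s)^{1-(1+\gamma)H} \abs{B_t - B_s}^\gamma$ gives only $\norm{A_{s, t}}_{L_m(\P)} \lesssim (t-s)^{1-H}$, which fails to produce $\beta_2 > 1/2$ when $H \geq 1/2$. The indicator rescues this: since the density of $B_s$ at $a$ is of order $s^{-H}$, conditioning yields $\P(B_s < a < B_t) \lesssim s^{-H}(t-s)^H$, and a Hölder argument gives
\[
\norm{A_{s, t}}_{L_m(\P)} \;\lesssim\; s^{-H/m}\,(t-s)^{1 - H(1 - 1/m)}.
\]
Hence I may take $\beta_2 = 1 - H(1 - 1/m)$ and $\gamma_2 = H/m$; the assumption \eqref{eq:local_time_m_condition} in the regime $H > 1/2$ is exactly the requirement $\beta_2 > 1/2$, and for $H < 1/2$ the crude bound already suffices. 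The continuity estimate \eqref{eq:sewing_continuity_at_zero} at the origin is handled by the crude bound with $\beta_3 = 1 - H > 0$.

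The conditional hypothesis uses the van Ness--Mandelbrot decomposition $B_r = Y_r + \tilde B_r$ with $Y_r$ $\cF_v$-measurable and $\tilde B_r$ independent of $\cF_v$, for $r \in \{s, u, t\}$ and $v < s$. Integrating against the Gaussian law of $(\tilde B_s, \tilde B_t)$, whose covariance is controlled by Lemma~\ref{lem:kernel_correlation}, the non-smooth indicator becomes the smooth map $(y_s, y_t) \mapsto \P(y_s + \tilde B_s < a < y_t + \tilde B_t)$, and repeating the change of variables above yields
\[
\expect[A_{s, t} \vert \cF_v] \;=\; \fc_{H, \gamma} \int_s^t \rho_r^v(a)\dd r \;+\; R_{s, t},
\]
where $\rho_r^v$ is the conditional density of $B_r$ given $\cF_v$, and $R_{s, t}$ is a remainder coming from (i) the $O((t-s)^2 (s-v)^{2H-2})$ correction in Lemma~\ref{lem:kernel_correlation}, and (ii) the smoothness of $r \mapsto \rho_r^v(a)$ on $[s, t]$. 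The leading term is additive in $[s, t]$, so $\expect[\delta A_{s, u, t} \vert \cF_v] = \delta R_{s, u, t}$; careful bookkeeping of the exponents then gives a bound of the shape~\eqref{eq:sewing_regularization_singular} with exponents $\alpha, \beta_1, \gamma_1$ satisfying \eqref{eq:alpha_and_beta_gamma}, roughly $\alpha = H$, $\beta_1 = 1 + H$ (so $\beta_1 - \alpha = 1$), and some $\gamma_1 < 1/2$ absorbing the $s^{-H}$ factor from $\rho^v$.

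Once Corollary~\ref{cor:singular_generalized_stochastic_sewing_lemma} produces $\cA$, identifying $\cA_T = \fc_{H, \gamma} L_T(a)$ uses uniqueness: by the occupation-density formula, $\expect[L_{t_2}(a) - L_{t_1}(a) \vert \cF_v] = \int_{t_1}^{t_2} \rho_r^v(a)\dd r$, so the conditional expectation of $\fc_{H, \gamma}(L_{t_2}(a) - L_{t_1}(a)) - A_{t_1, t_2}$ is exactly the remainder $R_{t_1, t_2}$ just controlled, and the unconditional bound is verified in the same spirit using standard moment estimates for local-time increments of fractional Brownian motions. The integrated statement~\eqref{eq:local_time_convergence_L_m} then follows from Remark~\ref{rem:convergence_rate} by Fubini, since the constants in our sewing-lemma estimates depend on $a$ only through Gaussian tail factors that are integrable in $a$. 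The main obstacle is the Gaussian calculation just sketched: the indicator $\indic_{\{B_s < a < B_t\}}$ is not smooth, so one must integrate out $(\tilde B_s, \tilde B_t)$ before expanding in $Y$ to gain smoothness, and simultaneously keep track of the $s^{-H}$ singularity from $\rho^v$ and the $(s-v)^{-H}$ decorrelation singularity required to fit the hypotheses of the singular sewing lemma.
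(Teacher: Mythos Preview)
Your proposal is correct and follows essentially the same approach as the paper: the same germ $A_{s,t}$, the same size estimate via the indicator gaining an extra $(t-s)^{H/m}$ at the cost of an $s^{-H/m}$ singularity, the same Gaussian computation of $\expect[A_{s,t}\mid\cF_v]$ via the Mandelbrot--van Ness decomposition and Lemma~\ref{lem:kernel_correlation}, and the same identification with the local time via the occupation-density formula together with the Gaussian tail in $a$ for the integrated statement. The paper organizes the calculation slightly differently---it directly verifies that $\cA_t=\fc_{H,\gamma}L_t(a)$ satisfies the sewing-lemma output estimates \eqref{eq:curl_A_conditional_singular}--\eqref{eq:curl_A_continuity_at_zero} rather than first constructing $\cA$ and then invoking uniqueness---and the precise exponents it obtains after combining all remainders (e.g.\ $\alpha=\min\{1,2H\}-H/m$) differ from your rough guesses $\alpha\approx H$, $\beta_1\approx 1+H$, but you correctly flag that this bookkeeping is where the labour lies.
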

\begin{remark}
  A similar result holds for a Brownian motion ($H = \frac{1}{2}$). However, we omit a proof since
  it is easier but requires a special treatment.
\end{remark}
\begin{remark}
  We can similarly prove
  \begin{equation*}
    \lim_{\abs{\pi} \to 0} \sum_{[s, t] \in \pi, B_s > a >  B_t} (t-s)^{1 - (1 + \gamma)H} \abs{B_t - B_s}^{\gamma} = \fc_{H, \gamma} L_T(a).
  \end{equation*}
  \tymchange{Consequently,} 
  \begin{equation*}
    \lim_{\abs{\pi} \to 0} \sum_{[s, t] \in \pi, \min\{B_s, B_t\} < a  < \max\{B_s, B_t\}} (t-s)^{1 - (1 + \gamma)H} \abs{B_t - B_s}^{\gamma} = 2 \fc_{H, \gamma} L_T(a),
  \end{equation*}
  where the convergence is in $L_m(\P)$.
\end{remark}
\begin{proof}
  We will not write down dependence on $H$, $\gamma$ and $m$.
  Without loss of generality, we can assume $\expect[B(0)] = 0$.
  To apply Theorem~\ref{thm:generalized_stochastic_sewing} (for $H < \frac{1}{2}$) or Corollary~\ref{cor:singular_generalized_stochastic_sewing_lemma} (for $H > \frac{1}{2}$), respectively, we set
  \begin{equation*}
    A_{s, t} \defby A_{s, t}(a) \defby (t-s)^{1 - (1 + \gamma)H} \abs{B_t - B_s}^{\gamma} \indic_{\{B_s < a < B_t \}}.
  \end{equation*}
  If we set $\cA_t \defby \fc_{H, \gamma} L_t$, it suffices to show that
  the estimates \eqref{eq:curl_A_conditional} and \eqref{eq:curl_A_one_half} are satisfied for $H < \frac{1}{2}$,
  and that the estimates \eqref{eq:curl_A_conditional_singular}, \eqref{eq:curl_A_one_half_singular}
  and \eqref{eq:curl_A_continuity_at_zero} are satisfied for $H > \frac{1}{2}$.
  Since the proof is rather long, we split the main arguments into three lemmas.

  \begin{lemma}\label{lem:local_time_level_crossing_one_half}
    We have
    \begin{equation*}
      \norm{A_{s, t}}_{L_m(\P)}
      \lesssim_T
      \begin{cases}
        \abs{t-s}^{1 - H} f_H(a),  & \text{for all } H \in (0, 1), \\
        (\expect[B(0)^2] + s^{2H})^{-\frac{1}{2m}}
        \abs{t-s}^{1 - H + \frac{H}{m}} f_H(a), & \text{if } H > \frac{1}{2},
      \end{cases}
    \end{equation*}
    where in either case there exists a constant $c = c(H) > 0$ such that
    \begin{equation}\label{eq:estimate_f_a}
      \abs{f_H(a)} \leq \exp\Big(-\frac{c a^2}{m (\expect[B(0)^2] + T^{2H})} \Big), \quad \forall a \in \R.
    \end{equation}
  \end{lemma}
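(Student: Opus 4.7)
The plan is to exploit the joint Gaussian structure of $(B_s, Y)$ where $Y := B_t - B_s$. Set $\sigma_s^2 := \expect[B_s^2] = \expect[B(0)^2] + c_H s^{2H}$, $\sigma_Y^2 := c_H(t-s)^{2H}$ and $\rho := \expect[B_s Y]/(\sigma_s \sigma_Y)$. The critical preliminary step is a uniform bound $|\rho| \leq \rho_{\max}(H) < 1$ over all $0 \leq s < t \leq T$. Using $\expect[\tilde B_s(\tilde B_t - \tilde B_s)] = \tfrac{c_H}{2}(t^{2H} - s^{2H} - (t-s)^{2H})$ for the centered part $\tilde B$ of $B$ and setting $\theta := (t-s)/s$, one computes
\[
\rho_{\tilde B_s, Y} = \frac{(1+\theta)^{2H} - 1 - \theta^{2H}}{2 \theta^H},
\]
which is continuous in $\theta \in (0,\infty)$ and tends to $0$ at both endpoints whenever $H \neq \tfrac12$; combined with the pointwise strict inequality $|\rho|<1$, this yields a uniform $\rho_{\max}(H) < 1$. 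Adding $\expect[B(0)^2] \geq 0$ only decreases $|\rho|$. Applying AM--GM to the cross term in the Gaussian exponent then produces the decoupled density bound
\[
p_{(B_s, Y)}(x,y) \leq \frac{C_H}{\sigma_s \sigma_Y} \exp\Bigl(-c_H\Bigl(\frac{x^2}{\sigma_s^2} + \frac{y^2}{\sigma_Y^2}\Bigr)\Bigr).
\]

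For the first bound (general $H$), rewrite the indicator as $\indic_{\{a - Y < B_s < a,\, Y > 0\}}$. On this event $|a| \leq |B_s| + Y$, hence $\indic_{\{B_s < a < B_t\}} \leq \indic_{\{Y \geq |a|/2\}} + \indic_{\{|B_s| \geq |a|/2\}}$. Gaussian tail estimates give directly $\expect[Y^{\gamma m} \indic_{\{Y \geq |a|/2\}}] \lesssim \sigma_Y^{\gamma m} e^{-ca^2/\sigma_Y^2}$, and Cauchy--Schwarz yields $\expect[Y^{\gamma m} \indic_{\{|B_s| \geq |a|/2\}}] \lesssim \sigma_Y^{\gamma m} e^{-ca^2/\sigma_s^2}$. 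Since $\sigma_s^2, \sigma_Y^2 \leq \expect[B(0)^2] + c_H T^{2H}$, combining these and taking $m$-th roots after multiplying by $(t-s)^{(1-(1+\gamma)H)m}$ produces the first bound with $f_H(a)$ of the stated exponential form.

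For the second bound (needed in the regime $H > \tfrac12$, where the extra factor $(\expect[B(0)^2] + s^{2H})^{-1/(2m)}$ improves the estimate when $\sigma_s$ is large), integrate the density estimate directly. With $E := \expect[Y^{\gamma m} \indic_{\{a-Y < B_s < a,\,Y > 0\}}]$ we have
\[
E \leq \int_0^\infty y^{\gamma m + 1} \sup_{x \in [a-y, a]} p_{(B_s,Y)}(x,y)\,dy \lesssim \frac{1}{\sigma_s \sigma_Y}\int_0^\infty y^{\gamma m+1}e^{-cy^2/\sigma_Y^2}e^{-c(|a|-y)_+^2/\sigma_s^2}\,dy,
\]
using $\min_{x \in [a-y, a]} |x| \geq (|a|-y)_+$. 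Splitting at $y = |a|/2$, using $(|a|-y)_+^2 \geq a^2/4$ on $\{y \leq |a|/2\}$ and $y^2 \geq a^2/4$ on the complement, yields $E \lesssim (\sigma_Y^{\gamma m + 1}/\sigma_s)\exp(-ca^2/(\expect[B(0)^2] + T^{2H}))$, which after incorporating the prefactor delivers the second bound.

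The main obstacle is the uniform correlation bound $|\rho| \leq \rho_{\max}(H) < 1$: this requires a compactness/continuity argument on the explicit formula for $\rho$ (really only in the degenerate case $\expect[B(0)^2] = 0$, since the initial-condition variance gives the bound for free). Once this is settled, the remaining estimates reduce to standard Gaussian computations.
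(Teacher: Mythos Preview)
Your proposal is correct, and for the refined bound it follows a genuinely different route from the paper.

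For the \emph{first} bound the paper is actually simpler than what you do: it applies a single Cauchy--Schwarz
\[
\|A_{s,t}\|_{L_m}\le (t-s)^{1-(1+\gamma)H}\,\big\||B_{s,t}|^{2\gamma}\big\|_{L_m}^{1/2}\,\P(B_s<a<B_t)^{1/(2m)}
\]
and bounds the probability by the one-dimensional Gaussian tail of $B_s$ (if $a<0$) or $B_t$ (if $a>0$). Your indicator splitting $\indic_{\{B_s<a<B_t\}}\le \indic_{\{Y\ge |a|/2\}}+\indic_{\{|B_s|\ge |a|/2\}}$ works too, but is a slightly longer path to the same place.

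For the \emph{second} bound the approaches differ more substantially. The paper writes $(B_s,B_{s,t})\stackrel{d}{=}\sqrt{c_H}\,(\chi_0 Y,\ \chi_1 Y+\chi_2 X)$ with explicit $\chi_0,\chi_1,\chi_2$ and independent standard Gaussians $X,Y$, then conditions on $Y$ and estimates $\P(X>\cdot)$ directly; the crucial gain of $\sigma_s^{-1}$ comes from the explicit factor $\chi_2/\chi_0$ after integrating out $Y$. You instead extract the same gain from a \emph{global} input, the uniform bound $|\rho|\le\rho_{\max}(H)<1$, which decouples the joint density and lets you integrate first in $x$ over an interval of length $y$. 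This is cleaner conceptually and, as a bonus, your argument does not use $H>\tfrac12$ anywhere (the correlation function you analyse vanishes at both endpoints for every $H\neq\tfrac12$, and for $H=\tfrac12$ it is identically zero), so you actually obtain the refined estimate for all $H\in(0,1)$. The paper's explicit decomposition uses $\chi_1\ge 0$, which is specific to $H>\tfrac12$; on the other hand it avoids the compactness step and tracks constants more precisely.
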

  \begin{remark}
    Due to \eqref{eq:local_time_m_condition}, the exponent $1 + \frac{H}{m} - H$ is greater than $\frac{1}{2}$.
  \end{remark}
  \begin{proof}
    We have
    \begin{align*}
      \norm{A_{s,t}}_{L_m(\P)} &\leq
      (t-s)^{1 - (1 + \gamma)H} \norm{\abs{B_t - B_s}^{2\gamma}}_{L_m(\P)}^{\frac{1}{2}}
      \P(B_s < a < B_t)^{\frac{1}{2m}} \\
      &\lesssim \abs{t-s}^{1-H} e^{- \frac{a^2}{4 m c_H (\expect[B(0)^2] + T^{2H})}}.
    \end{align*}
    Now we consider the case $H > \frac{1}{2}$.

    Set
    \begin{equation*}
      \chi_0 \defby (\expect[B(0)^2] + s^{2H})^{\frac{1}{2}}, \quad
      \chi_1 \defby \frac{1}{2\chi_0} (t^{2H} - s^{2H} - \abs{t-s}^{2H}),
      \quad \chi_2^2 \defby \abs{t-s}^{2H} - \chi_1^2.
    \end{equation*}
    Since $H > \frac{1}{2}$, $\chi_1 \geq 0$, and
    \begin{equation*}
      \abs{\chi_1} \lesssim \chi_0^{-1} T^{2H - 1} \abs{t-s}, \quad 0 \leq \chi_2 \leq \abs{t-s}^H.
    \end{equation*}
    Then, if $X$ and $Y$ are two independent standard normal distributions on $\R$, we have
    \begin{equation*}
      (B_s, B_{s, t}) = \sqrt{c_H} ( \chi_0 Y, \chi_1 Y + \chi_2 X) \quad \text{in law.}
    \end{equation*}
    Therefore, if we set
    \begin{equation}\label{eq:def_of_A_hat}
      \hat{A}_{s,t} \defby \abs{B_t - B_s}^{\gamma} \indic_{\{B_s < a < B_t\}}
    \end{equation}
    \begin{multline*}
      \norm{\hat{A}_{s, t}}_{L_m(\P)}^m
      \lesssim \chi_1^{m \gamma} \expect[\abs{Y}^{m \gamma} \indic_{\{\chi_0 Y < a, a - \chi_0 Y < \chi_1 Y + \chi_2 X \}}] \\
      + \chi_2^{m \gamma} \expect[\abs{X}^{m \gamma} \indic_{\{\chi_0 Y < a, a - \chi_0 Y < \chi_1 Y + \chi_2 X \}}].
    \end{multline*}
    We first estimate $\expect[\abs{Y}^{m \gamma} \indic_{\{\chi_0 Y < a, a - \chi_0 Y < \chi_1 Y + \chi_2 X \}}]$.
    Using the estimate
    \begin{equation*}
      \P(X > x) \lesssim  e^{-\frac{x^2}{2}} \quad \text{for } x > 0,
    \end{equation*}
    we have
    \begin{equation}\label{eq:X_and_Y}
      \P(X > \chi_2^{-1} (a - \chi_0 Y - \chi_1 Y) \vert Y)
      \lesssim  e^{-\frac{1}{8} [\chi_2^{-1} (a - \chi_0 Y)]^2}
      + \indic_{\{\chi_1 Y < \frac{1}{2} (a - \chi_0 Y) \}}.
    \end{equation}
    Then,
    \begin{align*}
      \MoveEqLeft[3]
      \expect[ e^{-\frac{1}{8} [\chi_2^{-1} (a - \chi_0 Y)]^2} \abs{Y}^{m \gamma} \indic_{\{\chi_0 Y < a\}}] \\
      &= \frac{1}{\sqrt{2 \pi}} \frac{\chi_2}{\chi_0}
      \int_0^{\infty} \abs*{\frac{\chi_2 y - a}{\chi_0}}^{m \gamma} e^{-\frac{1}{8} \abs{y}^2}
      e^{-\frac{1}{2} (\frac{\chi_2 y - a}{\chi_0})^2} \dd y \\
      &\lesssim \frac{\chi_2}{\chi_0} \int_0^{\infty}  e^{-\frac{1}{8} \abs{y}^2}
      e^{-\frac{1}{4} (\frac{\chi_2 y - a}{\chi_0})^2} \dd y \\
      &\lesssim \frac{\chi_2}{\chi_0} e^{-\frac{a^2}{4(\chi_0^2 + 2 \chi_2^2)} },
    \end{align*}
    where in the third line we applied
    \begin{equation*}
      \sup_{z \in \R} \abs{z}^{m \gamma} e^{- \frac{\abs{z}^2}{4}} < \infty.
    \end{equation*}
    And,
    \begin{align*}
      \expect[ \abs{Y}^{m \gamma}\indic_{\{\chi_1 Y < \frac{1}{2} (a - \chi_0 Y) \}}]
      =
      \int_{\frac{a}{\chi_0 + 2 \chi_1} < y < \frac{a}{\chi_0}} \abs{y}^{m \gamma} \frac{e^{-\frac{y^2}{2}}}{\sqrt{2 \pi}}
      \dd y
      \lesssim \frac{\chi_1}{\chi_0} e^{-\frac{a^2}{4 \chi_0^2}}.
    \end{align*}
    Therefore,
    \begin{equation*}
      \expect[\abs{Y}^{m \gamma} \indic_{\{\chi_0 Y < a, a - \chi_0 Y < \chi_1 Y + \chi_2 X \}}]
      \lesssim
      \frac{\chi_2}{\chi_0} e^{-\frac{a^2}{4(\chi_0^2 + 2 \chi_2^2)} }
      + \frac{\chi_1}{\chi_0} e^{-\frac{a^2}{4 \chi_0^2}}
    \end{equation*}

    We now estimate $\expect[\abs{X}^{m \gamma} \indic_{\{\chi_0 Y < a, a - \chi_0 Y < \chi_1 Y + \chi_2 X \}}]$.
    Similarly to \eqref{eq:X_and_Y}, we have
    \begin{equation*}
      \expect[\abs{X}^{m \gamma} \indic_{\{X > \chi_2^{-1} (a - \chi_0 Y - \chi_1 Y)\}} \vert Y]
      \lesssim  e^{-\frac{1}{16} [\chi_2^{-1} (a - \chi_0 Y)]^2}
      + \indic_{\{\chi_1 Y < \frac{1}{2} (a - \chi_0 Y) \}}
    \end{equation*}
    and similarly
    \begin{equation*}
      \expect[\abs{X}^{m \gamma} \indic_{\{\chi_0 Y < a, a - \chi_0 Y < \chi_1 Y + \chi_2 X \}}]
      \lesssim
      \frac{\chi_2}{\chi_0} e^{-\frac{a^2}{4(\chi_0^2 + 2 \chi_2^2)} }
      + \frac{\chi_1}{\chi_0} e^{-\frac{a^2}{4 \chi_0^2}}
    \end{equation*}

    Therefore, we conclude
    \begin{equation*}
      \norm{\hat{A}_{s, t}}_{L_m(\P)}
      \lesssim \Big(\frac{\chi_1 + \chi_2}{\chi_0} \Big)^{\frac{1}{m}}
      (\chi_1^{\gamma} + \chi_2^{\gamma}) e^{-\frac{a^2}{4 m (\chi_0^2 + \chi_2^2) }}
      \lesssim_T \chi_0^{-\frac{1}{m}} \abs{t-s}^{(\gamma + \frac{1}{m})H} e^{-\frac{a^2}{4 m (\expect[B(0)^2] + T^{2H}) }},
    \end{equation*}
    which completes the proof of the lemma.
  \end{proof}

  \tymchange{Recall the Mandelbrot--van Ness representation \eqref{eq:fbm_def}, and recall that $W$ is $(\mathcal{F}_t)_{t \in \mathbb{R}}$-Brownian motion.}

  \begin{lemma}
    Let $v < s < t$ and set
    \begin{equation*}
      Y_s \defby B(0) + \int_{-\infty}^v K(s, r) \dd W_r,
      \quad
      \sigma_s^{2} \defby \expect[ (\int_v^s K(s, r) \dd W_r )^2] = \frac{1}{2H} \abs{s - v}^{2H}.
    \end{equation*}
    If $\frac{t-s}{s-v}$ is sufficiently small,
    then
    \begin{equation*}
      \expect[A_{s, t} \vert \cF_v]
      = \fc_{H, \gamma} \frac{e^{-\frac{(Y_s - a)^2}{2 \sigma_s^2}}}{ \sqrt{2 \pi} \sigma_s}(t-s)
       + R,
    \end{equation*}
    where for some $c = c(H, m) > 0$,
    \begin{equation*}
      \norm{R}_{L_m(\P)} \lesssim
      (\expect[B(0)^2] + s^{2H})^{-\frac{1}{2m}} e^{- c (\expect[B(0)]^2 + T^{2H})^{-1} a^2}
    \Big(\frac{t-s}{s-v} \Big)^{\min\{1, 2H\} - \frac{H}{m} }  \abs{t-s}^{1 - H + \frac{H}{m}}.
    \end{equation*}
  \end{lemma}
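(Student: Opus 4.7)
The plan is to use the van Ness--Mandelbrot representation \eqref{eq:fbm_def} to obtain an explicit closed-form expression for $\expect[A_{s,t}\mid\cF_v]$, to extract the claimed leading Gaussian-density term, and to bound the remainder with the help of Lemma~\ref{lem:kernel_correlation} and the Gaussian-tail computation from the proof of Lemma~\ref{lem:local_time_level_crossing_one_half}. First, set $Y_r \defby B(0)+\int_{-\infty}^v K(r,u)\dd W_u$ and $\tilde B_r \defby \int_v^r K(r,u)\dd W_u$ for $r\in\{s,t\}$, so that $B_r=Y_r+\tilde B_r$, with $(Y_s,Y_t)$ being $\cF_v$-measurable and $(\tilde B_s,\tilde B_t)$ a centered Gaussian pair independent of $\cF_v$. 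By Lemma~\ref{lem:kernel_correlation}, the covariances $\expect[\tilde B_s^2]=\sigma_s^2$, $\expect[\tilde B_s\tilde B_{s,t}]=:\mu\sigma_s$, $\expect[\tilde B_{s,t}^2]=:\nu^2$ satisfy
\[
\nu^2 = c_H(t-s)^{2H}+O\bigl((s-v)^{2H-2}(t-s)^2\bigr),\qquad \mu/\sigma_s = O\bigl(((t-s)/(s-v))^{\min\{1,2H\}}\bigr).
\]
These are the two small parameters that govern the departure from a decoupled reference model.

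Next, conditioning on $\cF_v$ and then on $B_s=b$, the conditional law of $B_{s,t}$ is Gaussian with mean $m_b \defby Y_{s,t}+(\mu/\sigma_s)(b-Y_s)$ and variance $\nu^2$, while $B_s$ given $\cF_v$ has density $\phi_{\sigma_s}(\cdot-Y_s)$, where $\phi_\sigma$ denotes the centered Gaussian density of variance $\sigma^2$. Hence
\[
\expect[A_{s,t}\mid\cF_v] = (t-s)^{1-(1+\gamma)H}\int_{-\infty}^a\phi_{\sigma_s}(b-Y_s)\,\expect\bigl[|\nu Z+m_b|^\gamma\indic_{\{\nu Z+m_b>a-b\}}\bigr]\,db,
\]
with $Z$ standard normal. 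Substituting $b=a-\nu u$ and ``freezing'' $m_b\equiv 0$, $\phi_{\sigma_s}(a-\nu u-Y_s)\equiv\phi_{\sigma_s}(a-Y_s)$, and $\nu\equiv\sqrt{c_H}(t-s)^H$, Fubini together with the identity $\int_0^\infty\int_u^\infty z^\gamma\phi(z)\,dz\,du=\int_0^\infty z^{\gamma+1}\phi(z)\,dz$ gives precisely $\fc_{H,\gamma}\phi_{\sigma_s}(a-Y_s)(t-s)^{(1+\gamma)H}$. Multiplying by $(t-s)^{1-(1+\gamma)H}$ recovers the target leading term $\fc_{H,\gamma}\phi_{\sigma_s}(a-Y_s)(t-s)$.

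Finally, the pointwise remainder $R$ splits into three pieces: (i) the variance correction $\nu^2-c_H(t-s)^{2H}$, of relative size $((t-s)/(s-v))^{2-2H}$; (ii) the density Taylor error $\phi_{\sigma_s}(a-\nu u-Y_s)-\phi_{\sigma_s}(a-Y_s)$, of relative size $\nu/\sigma_s\sim((t-s)/(s-v))^H$; and (iii) the mean shift $m_b$, bounded by $(\mu/\sigma_s)|b-Y_s|+|Y_{s,t}|$, of relative size $\sim((t-s)/(s-v))^{1-H}$. Contribution (i) is always dominated by the maximum of (ii) and (iii), so the total pointwise relative error is $((t-s)/(s-v))^{\min\{H,1-H\}}$. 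To pass to $L_m(\P)$ in $a$ one mirrors the Gaussian computation in the proof of Lemma~\ref{lem:local_time_level_crossing_one_half}: both the prefactor $(\expect[B(0)^2]+s^{2H})^{-1/(2m)}$ and the Gaussian tail $\exp(-ca^2/(\expect[B(0)^2]+T^{2H}))$ arise from the $L_m$ norm of $\phi_{\sigma_s}(\cdot-Y_s)$ against the Gaussian law of $Y_s$, which also yields the extra factor $\sigma_s^{1/m}\sim(s-v)^{H/m}$ upgrading the exponent $\min\{H,1-H\}$ to the stated $\min\{1,2H\}-H/m$ (via the identity $\min\{1,2H\}-H=\min\{H,1-H\}$). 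The main obstacle is the $L_m$ analysis of contribution (iii): a crude pointwise bound $|m_b|\le(\mu/\sigma_s)|b-Y_s|+|Y_{s,t}|$ at the generic scale $|b-Y_s|\sim\sigma_s$ destroys the Gaussian tail in $a-Y_s$; instead one must integrate the remainder against $\phi_{\sigma_s}(b-Y_s)$ \emph{before} taking absolute values in $|\nu Z+m_b|^\gamma$, so that the exponential decay in $a-Y_s$ is preserved throughout the argument and delivers the correct Gaussian tail in the final $L_m$ estimate.
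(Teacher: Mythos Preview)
Your outline follows the same route as the paper: decompose $B_r=Y_r+\tilde B_r$, condition so that $(\tilde B_s,\tilde B_{s,t})$ becomes a Gaussian pair with parameters controlled by Lemma~\ref{lem:kernel_correlation}, extract the leading Gaussian-density term, and bound the remainder. Your parametrisation via $b=B_s$ and the substitution $b=a-\nu u$ is equivalent to the paper's parametrisation via standard normals $(X,Y)$ with $p(Y),q(Y)$; the ``freezing'' of $m_b,\phi_{\sigma_s},\nu$ corresponds exactly to isolating the main term $\kappa_{s,t}^{\gamma}\,\expect[I_\gamma(p+q)\indic_{\{y_s+\sigma_s Y<a\}}]$ in the paper.

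Where your proposal falls short is that the trichotomy (i)--(ii)--(iii) is too coarse to deliver the bound. Two concrete points:
\begin{itemize}
\item The density error (ii) is \emph{not} uniformly of relative size $\nu/\sigma_s$. After the substitution $b=a-\nu u$ one must control $\phi_{\sigma_s}(a-\nu u-Y_s)-\phi_{\sigma_s}(a-Y_s)$ for \emph{all} $u>0$, and the first-order Taylor bound breaks down when $\nu u\gtrsim|a-Y_s|$. The paper handles this via the elementary inequality \eqref{eq:exponential_difference}, which produces, besides a first-order term with the correct decay $e^{-c(Y_s-a)^2/\sigma_s^2}$, an indicator contribution giving a term of the form $\sigma_s^{-1}\kappa_{s,t}\,e^{-c(Y_s-a)^2/\kappa_{s,t}^2}$. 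This tail term does not fit into your (ii) and must be estimated separately in $L_m$ using \eqref{eq:Y_and_exp_Y} with $\sigma\sim\kappa_{s,t}$ rather than $\sigma\sim\sigma_s$.
\item For (iii) you correctly flag the obstacle but do not resolve it. The paper's resolution is to expand $\expect[|X+p|^\gamma\indic_{\{X>q\}}\mid Y]$ \emph{before} integrating in $Y$ into the three pieces of \eqref{eq:expect_X_a_three_terms}: a main piece $I_\gamma(p+q)$, a smooth error $O(|p|e^{-q^2/16})$, and an indicator error $O((1+|p|)^\gamma)\indic_{\{q<2|p|\}}$. Each of these is then integrated in $Y$ with its own change of variables, producing a long list of terms whose $L_m(\P)$ norms are bounded one by one using \eqref{eq:Y_and_exp_Y} and \eqref{eq:Y_s_t_and_exp_Y}. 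Your last sentence describes the right spirit, but the actual execution is precisely this case-by-case analysis, not a single integration-before-absolute-values step.
\end{itemize}
In short: the strategy is the same as the paper's, but the proposal remains a sketch; the substance of the lemma lies in the many individual remainder estimates that the paper carries out explicitly and that your (i)--(ii)--(iii) does not cover.
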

  \begin{proof}
  As in the proof of Proposition~\ref{prop:integral_H_greater_than_one_half}, for $s > v$ we set
  \begin{equation*}
    Y_s \defby B(0) +  \int_{-\infty}^v K(s, r) \dd W_r, \quad \tilde{B}_s \defby \int_v^s K(s, r) \dd W_r
  \end{equation*}
  and we write $y_s \defby Y_s$ under the conditioning of $\cF_v$.
  Then, recalling $\hat{A}_{s,t}$ from \eqref{eq:def_of_A_hat}, we have
  \begin{equation}\label{eq:local_time_A_s_t_conditioning}
    \expect[\hat{A}_{s, t} \vert \cF_v]
    = \expect[\abs{y_{s,t} + \tilde{B}_{s, t}}^{\gamma} \indic_{\{y_s + \tilde{B}_s < a, \,\, y_{s, t} + \tilde{B}_{s, t} > a -y_s - \tilde{B}_s\}}]
  \end{equation}
  To compute, we set
  \begin{equation*}
    \sigma_{s, t}^2 \defby \expect[\tilde{B}_{s, t}^2], \quad \sigma_s^2 \defby \expect[\tilde{B}_s^2],
    \quad \rho_{s, t} \defby \expect[\tilde{B}_s \tilde{B}_{s, t}].
  \end{equation*}
  By Lemma~\ref{lem:kernel_correlation},
  \begin{equation*}
    \sigma_{s, t}^2 = c_H \abs{t-s}^{2H} + O(\abs{s-v}^{2H - 2} \abs{t-s}^2),
    \quad \sigma_{s}^2 = \frac{1}{2H} \abs{s-v}^{2H}
  \end{equation*}
  and
  \begin{equation*}
    \rho_{s, t} =
      \frac{1}{2} \abs{s-v}^{2H - 1} \abs{t-s} - \frac{c_H}{2} \abs{t-s}^{2H} + O(\abs{s-v}^{2H - 2} \abs{t-s}^2).
  \end{equation*}
  We have the decomposition
  \begin{equation*}
    \tilde{B}_{s, t} = \sigma^{-2}_s \rho_{s, t} \tilde{B}_s + (\tilde{B}_{s, t} - \sigma_s^{-2} \rho_{s, t} \tilde{B}_s ),
  \end{equation*}
  where the second term is independent of $\tilde{B}_{s}$. If we set
  \begin{equation}\label{eq:kappa_asymptotics}
    \kappa_{s, t}^2 \defby \sigma_{s, t}^2 - \sigma_s^{-2} \rho_{s, t}^2
    = c_H \abs{t-s}^{2H}
    + O(\abs{s-v}^{2H - 2} \abs{t-s}^2)
     + O(\abs{s-v}^{-2H} \abs{t-s}^{4H})
  \end{equation}
  and if we write $X$ and $Y$ for two independent standard normal distributions,
  then the quantity \eqref{eq:local_time_A_s_t_conditioning} equals to
  \begin{multline}\label{eq:local_time_A_s_t_conditioning_further}
     \expect[\abs{y_{s, t} + \sigma_s^{-1} \rho_{s, t} Y + \kappa_{s, t} X}^{\gamma}
    \indic_{\{y_s + \sigma_s Y < a \}} \indic_{\{ y_{s, t} + \sigma_s^{-1} \rho_{s, t} Y + \kappa_{s, t} X >
    a -y_s - \sigma_s Y \}}] \\
    = \kappa_{s, t}^{\gamma} \expect[\abs{X + p}^{\gamma} \indic_{\{y_s + \sigma_s Y < a \}}
    \indic_{\{ X > q\}}],
  \end{multline}
  where
  \begin{align*}
    p &\defby p(Y) \defby \kappa_{s, t}^{-1} (y_{s, t} + \sigma_s^{-1} \rho_{s, t} Y), \\
    q&\defby q(Y) \defby  \kappa_{s, t}^{-1} (a -y_s - \sigma_s Y- y_{s, t} - \sigma_s^{-1} \rho_{s, t} Y ).
  \end{align*}

  For a while, assume $\gamma > 0$. Using the estimate
  \begin{equation*}
    \abs{(1 + \epsilon)^{\gamma} - 1} \lesssim
      \epsilon, \quad \text{if } \abs{\epsilon} \leq 1,
  \end{equation*}
  we have
  \begin{multline*}
    \expect[\abs{X + p}^{\gamma} \indic_{\{X > q\}} \vert Y] \\
    = \int_q^{\infty} \abs{x}^{\gamma} \frac{e^{-\frac{x^2}{2}}}{\sqrt{2 \pi}} \dd x \indic_{\{q \geq 2\abs{p}\}}
    + O(\abs{p} \int_q^{\infty} \abs{x}^{\gamma - 1} e^{-\frac{x^2}{2}} \dd x ) \indic_{\{q \geq 2 \abs{p}\}}
    + O((1 + \abs{p})^{\gamma}) \indic_{\{q < 2 \abs{p} \}}.
  \end{multline*}
  We set
  \begin{equation*}
    I_{\gamma}(y) \defby \int_{\abs{y}}^{\infty} \abs{x}^{\gamma} \frac{e^{-\frac{x^2}{2}}}{\sqrt{2 \pi}} \dd x.
  \end{equation*}
  We have for $y > 0$
  \begin{equation*}
    I_{\gamma}'(y) = -\abs{y}^{\gamma} \frac{e^{-\frac{y^2}{2}}}{\sqrt{2 \pi}}
  \end{equation*}
  and if $q \geq 2 \abs{p}$ and $y \in [q - \abs{p}, q + \abs{p}]$, then
  \begin{equation*}
    \abs{I_{\gamma}'(y)} \lesssim e^{-\frac{1}{4}(q-\abs{p})^2} \leq e^{-\frac{1}{16}q^2}
  \end{equation*}
  Therefore, if $q \geq 2 \abs{p}$, we have
  \begin{equation*}
    \abs{I_{\gamma}(q) - I_{\gamma} (p + q)} \lesssim e^{-\frac{q^2}{16}} \abs{p}.
  \end{equation*}
  Therefore,
  \begin{align}
    \MoveEqLeft[3]
    \expect[\abs{X + p}^{\gamma} \indic_{\{X > q\}} \vert Y] \notag \\
    &= I_{\gamma}(p + q) + O(\abs{p} I_{\gamma - 1}(q))\indic_{\{q \geq 2 \abs{p}\}} + O(\abs{p} e^{-\frac{q^2}{16}})
     + O((1 + \abs{p})^{\gamma}) \indic_{\{q < 2 \abs{p} \}} \notag\\
     \label{eq:expect_X_a_three_terms}
    &= I_{\gamma}(p + q) + O(\abs{p} e^{-\frac{q^2}{16} })
     + O((1 + \abs{p})^{\gamma}) \indic_{\{q < 2 \abs{p} \}}.
  \end{align}
  When $\gamma = 0$, we have
  \begin{equation*}
    \expect[\indic_{\{X > q\}} \vert Y] = I_{0}(q)
    = I_{0}(p+q) + O(\abs{p} e^{-\frac{q^2}{16} })
     + O(1) \indic_{\{q < 2 \abs{p} \}}
  \end{equation*}
  and thus \eqref{eq:expect_X_a_three_terms} holds for $\gamma = 0$.
  We estimate the expectation (with respect to $Y$) of each term.

  We have
  \begin{align*}
    \expect[I_{\gamma}(p(Y) + q(Y)) \indic_{\{y_s + \sigma_s Y \leq a \}}]
    = \frac{\kappa_{s, t}}{\sqrt{2 \pi} \sigma_s} \int_{0}^{\infty} I_{\gamma}(z)
    e^{-\frac{(\kappa_{s, t} z + y_s - a)^2}{2 \sigma_s^2}} \dd z.
  \end{align*}
  By using the estimate
  \begin{equation}\label{eq:exponential_difference}
    \abs{e^{-( z - \eta)^2} - e^{-\eta^2}} \leq 3 \abs{\eta} e^{-\frac{\eta^2}{4}} \abs{z}
    + 2 \indic_{\{\abs{\eta} \leq 2  \abs{z}\}},
  \end{equation}
  we obtain
  \begin{align*}
    \MoveEqLeft[3]
    \expect[I_{\gamma}(p(Y) + q(Y)) \indic_{\{y_s + \sigma_s Y \leq 0 \}}] \\
    &= \frac{\kappa_{s, t} e^{-\frac{(y_s - a)^2}{2 \sigma_s^2}}}{\sqrt{2 \pi} \sigma_s} \int_{0}^{\infty} I_{\gamma}(z) \dd z
    + O(\sigma_s^{-2} \kappa_{s, t}^2e^{-\frac{(y_s - a)^2}{16 \sigma_s^2}})
    + O(\sigma_s^{-1} \kappa_{s,t} e^{-\frac{(y_s - a)^2}{8 \kappa_{s,t}^2}})\\
    &= \frac{\kappa_{s, t} e^{-\frac{(y_s - a)^2}{2 \sigma_s^2}}}{\sqrt{2 \pi} \sigma_s} \int_0^{\infty}
    \abs{x}^{\gamma + 1} \frac{e^{-\frac{x^2}{2}}}{\sqrt{2 \pi}} \dd x
    + O(\sigma_s^{-2} \kappa_{s, t}^2e^{-\frac{(y_s - a)^2}{16 \sigma_s^2}})
    + O(\sigma_s^{-1} \kappa_{s,t} e^{-\frac{(y_s - a)^2}{8 \kappa_{s,t}^2}}).
  \end{align*}

  Next, we estimate the second term of \eqref{eq:expect_X_a_three_terms}. We have for $n \in \{0, 1\}$,
  \begin{align*}
    \MoveEqLeft[3]
    \expect[\abs{Y}^n e^{-\frac{q(Y)^2}{16}}] \\
    &= \frac{\kappa_{s, t}}{\sigma_s + \sigma_s^{-1} \rho_{s,t}}
    \int_{\R} \abs*{\frac{\kappa_{s,t} z + a - y_{s,t} - y_s}{\sigma_s + \sigma_s^{-1} \rho_{s,t}} }^n
    e^{-\frac{z^2}{16}} \frac{1}{\sqrt{2 \pi}} e^{-\frac{1}{2}
    \Big(\frac{\kappa_{s,t} z + a - y_{s,t} - y_s}{\sigma_s + \sigma_s^{-1} \rho_{s,t}} \Big)^2} \dd z \\
    &\lesssim \frac{\kappa_{s, t}}{\sigma_s + \sigma_s^{-1} \rho_{s, t}}
    \Big(\frac{\kappa_{s,t}  + \abs{a - y_t}}{\sigma_s + \sigma_s^{-1} \rho_{s,t}} \Big)^n \\
    &\hspace{1cm}\times
    \Big[
      e^{-\frac{(y_t - a)^2}{2 (\sigma_s + \sigma_s^{-1} \rho_{s,t})^2}}
      + e^{-\frac{(y_t - a)^2}{16 (\sigma_s + \sigma_s^{-1} \rho_{s,t})^2}}
      \frac{\kappa_{s,t}}{\sigma_s + \sigma_s^{-1} \rho_{s,t}}
      + e^{-\frac{(y_t - a)^2}{32 \kappa_{s,t}^2}}
    \Big],
  \end{align*}
  where we applied \eqref{eq:exponential_difference} to get the last inequality.
  Therefore, we obtain
  \begin{equation*}
    \expect[\abs{p(Y)} e^{-\frac{q(Y)^2}{16}}]
    \lesssim \Big(\frac{\abs{y_{s, t}}}{\sigma_s + \sigma_s^{-1} \rho_{s, t}} + \sigma_s^{-1} \abs{\rho_{s, t}}
    \frac{\kappa_{s,t}  + \abs{y_t - a}}{(\sigma_s + \sigma_s^{-1} \rho_{s,t})^2} \Big)
      e^{-\frac{(y_t - a)^2}{32 (\sigma_s + \sigma_s^{-1} \rho_{s,t})^2}}
  \end{equation*}

  Finally, we estimate the third term of \eqref{eq:expect_X_a_three_terms}.
  Suppose that $\frac{t-s}{s-v}$ is so small that
  $\abs{\sigma_s^{-2} \rho_{s, t}} \leq \frac{1}{24}$, and then
  we have
  \begin{align*}
    \indic_{\{y_s + \sigma_s Y \leq a \}} \indic_{\{q(Y) \leq 2 \abs{p(Y)}\}}
    &\leq \indic_{\{\abs{a - y_s - \sigma_s Y} \leq 3 \abs{y_{s,t} + \sigma_s^{-1} \rho_{s,t} Y}\}} \\
    &\leq \indic_{\{\abs{\sigma_s^{-1} (a - y_s) - Y} \leq 6 \sigma_s^{-1}\abs{y_{s,t}}
    + 6 \sigma_s^{-3} \abs{\rho_{s,t}} \abs{a - y_s} \}}.
  \end{align*}
  Hence,
  \begin{multline*}
    \expect[(1 + p(Y))^{\gamma} \indic_{\{y_s + \sigma_s Y \leq a \}} \indic_{\{q(Y) \leq 2 \abs{p(Y)}\}}] \\
    \lesssim
    (1 + \kappa_{s,t}^{-1} (\abs{y_{s,t}} + \sigma_s^{-1} \rho_{s,t} ))^{\gamma} \\
    \times \Big(e^{-\frac{(y_s - a)^2}{5 \sigma_s^2}}
    + \indic_{\{\abs{\sigma_s^{-1} \abs{y_s - a}}
    \leq 12 \sigma_s^{-1} \abs{y_{s,t}} + 12 \sigma_s^{-3} \abs{\rho_{s,t}} \abs{y_s - a} \}} \Big)
    (\sigma_s^{-1} \abs{y_{s,t}} + \sigma_s^{-3} \abs{\rho_{s,t}} \abs{y_s -a }).
  \end{multline*}
  and
  \begin{equation*}
    \indic_{\{\abs{\sigma_s^{-1} (y_s - a)} \leq 12 \sigma_s^{-1} \abs{y_{s,t}}
    + 12 \sigma_s^{-3} \abs{\rho_{s,t}} \abs{y_s - a} \}}
    \leq \indic_{\{\abs{y_s - a} \leq 24  \abs{y_{s,t}} \}}.
  \end{equation*}
  This gives the estimate of the third term.

  In summary, recalling that $\expect[\hat{A}_{s,t} \vert \cF_v]$ equals to \eqref{eq:local_time_A_s_t_conditioning_further},
  we obtain
  \begin{equation*}
    \expect[\hat{A}_{s, t} \vert \cF_v]
    = \frac{\kappa_{s,t}^{\gamma+1} e^{-\frac{(Y_s - a)^2}{2 \sigma_s^2}}}{ \sqrt{2 \pi} \sigma_s} \int_0^{\infty}
    \abs{x}^{\gamma+1} \frac{e^{-\frac{x^2}{2}}}{\sqrt{2 \pi}} \dd x + R_1,
  \end{equation*}
  where
  \begin{align*}
    \abs{R_1} \lesssim& \sigma_s^{-2} \kappa_{s, t}^{\gamma + 2} e^{-\frac{(Y_s - a)^2}{16 \sigma_s^2}}
    + \sigma_s^{-1} \kappa_{s, t}^{\gamma + 1} e^{-\frac{(Y_s - a)^2}{8 \kappa_{s,t}^2}} \\
    &+ \kappa_{s,t}^{\gamma}
    \Big(\frac{\abs{Y_{s, t}}}{\sigma_s + \sigma_s^{-1} \rho_{s, t}} + \sigma_s^{-1} \abs{\rho_{s, t}}
    \frac{\kappa_{s,t}  + \abs{Y_t - a}}{(\sigma_s + \sigma_s^{-1} \rho_{s,t})^2} \Big)
    e^{-\frac{(Y_t - a)^2}{32 (\sigma_s + \sigma_s^{-1} \rho_{s,t})^2}}\\
    &+(\kappa_{s,t}^{\gamma} + (\abs{Y_{s,t}} + \sigma_s^{-1} \rho_{s,t})^{\gamma})
    \Big(e^{-\frac{(Y_s - a)^2}{5 \sigma_s^2}}
    + \indic_{\{\abs{Y_s - a} \leq 24  \abs{Y_{s,t}} \}} \Big) \\
    &\hspace{6cm} \times(\sigma_s^{-1} \abs{Y_{s,t}} + \sigma_s^{-3} \abs{\rho_{s,t}} \abs{Y_s - a}).
  \end{align*}
  Let us estimate $\norm{R_1}_{L_m(\P)}$.
  Recall that
  \begin{equation*}
    \kappa_{s, t} \lesssim \abs{t-s}^{H},
    \quad \sigma_s \lesssim \abs{s - v}^{H},
  \end{equation*}
  and
  \begin{equation*}
    \abs{\rho_{s, t}} \lesssim
    \begin{cases}
      \abs{s - v}^{2H -1} \abs{t-s}, \quad &H > \frac{1}{2}, \\
       \abs{t-s}^{2H}, \quad &H < \frac{1}{2}.
    \end{cases}
  \end{equation*}
  We have the estimate \eqref{eq:estimate_of_Y_s_u} of $Y_{s, t}$.
  Since
  \begin{equation*}
    \expect[Y_s^2] = \expect[B(0)^2] + c_H s^{2H} - \frac{1}{2H} \abs{s-v}^{2H} \gtrsim \expect[B(0)^2] + s^{2H}
    =: \chi_s^2,
  \end{equation*}
  there is a constant $c = c(H) > 0$ such that
  \begin{equation}\label{eq:Y_and_exp_Y}
    \expect[\abs{Y_s - a}^n e^{-\frac{(Y_s - a)^2}{\sigma^2}}]
    \lesssim_{n} \chi_s^{-1} \sigma^{n+1} e^{-\frac{c a^2}{\chi_s^2 + \sigma^2}},
  \end{equation}
  \begin{equation}\label{eq:Y_s_t_and_exp_Y}
    \expect[\abs{Y_{s,t}}^n e^{-\frac{(Y_t - a)^2}{\sigma^2}}]
    \lesssim_{n} \chi_s^{-1} \sigma
     \abs{v-s}^{n(H - 1)} \abs{t-s}^n e^{-\frac{c a^2}{\chi_s^2 + \sigma^2}}.
  \end{equation}
  Therefore, for some constant $c_1 = c(H, m)>0$,
  \begin{multline*}
    \norm{\sigma_s^{-2} \kappa_{s, t}^{\gamma + 2} e^{-\frac{(Y_s - a)^2}{2 \sigma_s^2}}}_{L_m(\P)} \\
    \lesssim \chi_s^{-\frac{1}{m}} \sigma_s^{-2 + \frac{1}{m}} \kappa_{s,t}^{\gamma+2}
    e^{-\frac{c_1 a^2}{\chi_s^2}}
    \lesssim \chi_s^{-\frac{1}{m}} \abs{s-v}^{-(2-\frac{1}{m})H} \abs{t-s}^{(\gamma+2)H} e^{-\frac{c_1 a^2}{\chi_s^2}},
  \end{multline*}
  \begin{align*}
    \MoveEqLeft[3]
    \norm{\kappa_{s,t}^{\gamma}
    \frac{\abs{Y_{s, t}}}{\sigma_s + \sigma_s^{-1} \rho_{s, t}}
      e^{-\frac{(Y_t - a)^2}{32 (\sigma_s + \sigma_s^{-1} \rho_{s,t})^2}}}_{L_m(\P)} \\
    &\lesssim \chi_s^{-\frac{1}{m}}
    \kappa_{s,t}^{\gamma} \sigma_s^{-1 + \frac{1}{m}} \abs{s-v}^{H-1} \abs{t-s}e^{-\frac{c_1 a^2}{\chi_s^2}}
    \lesssim \chi_s^{-\frac{1}{m}} \abs{s-v}^{\frac{H}{m} - 1} \abs{t-s}^{\gamma H + 1}e^{-\frac{c_1 a^2}{\chi_s^2}},
  \end{align*}
  \begin{multline*}
    \norm{
     \kappa_{s,t}^{\gamma}
    \sigma_s^{-1} \abs{\rho_{s, t}}
    \frac{\kappa_{s,t}  + \abs{Y_t - a}}{(\sigma_s + \sigma_s^{-1} \rho_{s,t})^2}
      e^{-\frac{(Y_t - a)^2}{32 (\sigma_s + \sigma_s^{-1} \rho_{s,t})^2}}}_{L_m(\P)}
    \lesssim \chi_s^{-\frac{1}{m}} \kappa_{s, t}^{\gamma} \sigma_s^{-2 + \frac{1}{m}} \abs{\rho_{s,t}} e^{-\frac{c_1 a^2}{\chi_s^2}} \\
    \lesssim
    \chi_s^{-\frac{1}{m}} e^{-\frac{c_1 a^2}{\chi_s^2}}
    \begin{cases}
       \abs{s-v}^{-1 + \frac{H}{m}} \abs{t-s}^{\gamma H + 1}, \quad &H > \frac{1}{2}, \\
       \abs{s-v}^{-2H + \frac{H}{m} } \abs{t-s}^{(\gamma+2)H}, & H < \frac{1}{2},
    \end{cases}
  \end{multline*}
  \begin{align*}
    \MoveEqLeft[3]
    \norm{
    \kappa_{s,t}^{\gamma}
    e^{-\frac{(Y_s - a)^2}{5 \sigma_s^2}}
    (\sigma_s^{-1}\abs{Y_{s,t}} + \sigma_s^{-3} \abs{\rho_{s,t}} \abs{Y_s - a})
    }_{L_m(\P)} \\
    &\lesssim \chi_s^{-1} \kappa_{s,t}^{\gamma}(\sigma_s^{\frac{1}{m} - 1} \abs{s-v}^{H-1} \abs{t-s}
    + \sigma_s^{-2+\frac{1}{m}} \abs{\rho_{s,t}}) e^{-\frac{c_1 a^2}{\chi_s^2}}\\
    &\lesssim
    \chi_s^{-\frac{1}{m}} e^{-\frac{c_1 a^2}{\chi_s^2}}
    \begin{cases}
       \abs{s-v}^{\frac{H}{m} - 1} \abs{t-s}^{2-H}, &H > \frac{1}{2}, \\
       ( \abs{s-v}^{\frac{H}{m} -1} \abs{t-s}^{2-H}
      + \abs{s-v}^{(-2+\frac{1}{m})H} \abs{t-s}^{1+H}), &H < \frac{1}{2},
    \end{cases}
  \end{align*}
  \begin{align*}
    \MoveEqLeft[3]
    \norm{
    \abs{Y_{s,t}}^{\gamma}
    e^{-\frac{(Y_s - a)^2}{5 \sigma_s^2}}
    (\sigma_s^{-1} \abs{Y_{s,t}} + \sigma_s^{-3} \abs{\rho_{s,t}} \abs{Y_s - a})
    }_{L_m(\P)} \\
    &\lesssim \sigma_s^{-1} \norm{
    \abs{Y_{s,t}}^{\gamma + 1}
    e^{-\frac{(Y_s - a)^2}{5 \sigma_s^2}}}_{L_m(\P)}
    + \sigma_s^{-2} \rho_{s, t} \norm{
    \abs{Y_{s,t}}^{\gamma}
    e^{-\frac{(Y_s - a)^2}{6 \sigma_s^2}}}_{L_m(\P)} \\
    &\lesssim
    \chi_s^{-\frac{1}{m}} e^{-\frac{c_1 a^2}{\chi_s^2}}
    \big[
    \sigma_s^{\frac{1}{m} - 1} \abs{s-v}^{(\gamma +1)(H - 1)} \abs{t-s}^{(\gamma + 1) } \\
    &\hspace{3cm}+
    \sigma_s^{-2+\frac{1}{m}} \abs{\rho_{s,t}} \abs{v-s}^{\gamma(H-1)} \abs{t-s}^{\gamma} \big]\\
    &\lesssim_{H, m}
    \chi_s^{-\frac{1}{m}}
    e^{-\frac{c_1 a^2}{\chi_s^2}} \\
    &\times
    \begin{cases}
       \abs{s-v}^{(\gamma+\frac{1}{m})H - (\gamma+1)} \abs{t-s}^{\gamma+1}, &H> \frac{1}{2}, \\
       (\abs{s-v}^{(\gamma+\frac{1}{m})H - (\gamma+1)} \abs{t-s}^{\gamma+1}
      + \abs{s-v}^{(-2 + \frac{1}{m} + \gamma)H - \gamma} \abs{t-s}^{\gamma+ 2H}), &H< \frac{1}{2},
    \end{cases}
  \end{align*}
  \begin{align*}
    \MoveEqLeft
    \norm{
    (\sigma_s^{-1} \rho_{s,t})^{\gamma}
    e^{-\frac{Y_s^2}{5 \sigma_s^2}}
    (\sigma_s^{-1} \abs{Y_{s,t}} + \sigma_s^{-3} \abs{\rho_{s,t}} \abs{Y_s})
    }_{L_m(\P)} \\
    &\lesssim \chi_s^{-\frac{1}{m}} e^{-\frac{c_1 a^2}{\chi_s^2}}
    \big[
    \sigma_s^{\frac{1}{m} - 1 - \gamma} \abs{\rho_{s,t}}^{\gamma} \abs{s-v}^{H-1} \abs{t-s}
    +  \sigma_s^{-2 + \frac{1}{m} - \gamma} \abs{\rho_{s,t}}^{\gamma+ 1} \big] \\
    &\lesssim \chi_{s}^{-\frac{1}{m}}e^{-\frac{c_1 a^2}{\chi_s^2}} \\
    &\times
    \begin{cases}
      \abs{s-v}^{(\gamma + \frac{1}{m})H - (\gamma + 1)} \abs{t-s}^{\gamma + 1}, & H > \frac{1}{2}, \\
      \abs{s-v}^{(\frac{1}{m} - \gamma)H - 1} \abs{t-s}^{1+ 2 H \gamma}
      + \abs{s-v}^{(-2 + \frac{1}{m} - \gamma)H} \abs{t-s}^{2H(\gamma+1)}, & H < \frac{1}{2}, \\
    \end{cases}
  \end{align*}
  and finally
  \begin{align*}
    \MoveEqLeft
    \norm{
      (\kappa_{s,t}^{\gamma} + (\abs{Y_{s,t}} + \sigma_s^{-1} \rho_{s,t} )^{\gamma})
     \indic_{\{\abs{Y_s - a} \leq 24  \abs{Y_{s,t}} \}} (\sigma_s^{-1}\abs{Y_{s,t}} + \sigma_s^{-3} \abs{\rho_{s,t}} \abs{Y_s})
    }_{L_m(\P)} \\
     \lesssim&
    \sigma_s^{-1} (\kappa_{s,t}^{\gamma} + \sigma_s^{-\gamma} \abs{\rho_{s,t}}^{\gamma})
    \norm{Y_{s,t} \indic_{\{\abs{Y_s - a} \leq 24  \abs{Y_{s,t}} \}}}_{L_m(\P)} \\
    &+ \sigma_s^{-1} \norm{\abs{Y_{s,t}}^{\gamma+1} \indic_{\{\abs{Y_s - a} \leq 24  \abs{Y_{s,t}} \}}}_{L_m(\P)} \\
    \lesssim& \sigma_s^{-1} (\kappa_{s,t}^{\gamma} + \sigma_s^{-\gamma} \abs{\rho_{s,t}}^{\gamma})
    \chi_s^{-\frac{1}{m}} e^{-\frac{c_1 a^2}{\chi_s^2}}
    \abs{s-v}^{(H-1)(1+\frac{1}{m})} \abs{t-s}^{1 + \frac{1}{m}} \\
    &+ \sigma_s^{-1} \chi_s^{-\frac{1}{m}} e^{-\frac{c_1 a^2}{\chi_s^2}}
    \abs{s-v}^{(H - 1)(1+ \gamma + \frac{1}{m})} \abs{t-s}^{1 + \gamma + \frac{1}{m}} \\
    \lesssim& \chi_s^{-\frac{1}{m}}e^{-\frac{c_1 a^2}{\chi_s^2}}
    \big[ \abs{s-v}^{\frac{H}{m} - (1 + \frac{1}{m})} \abs{t-s}^{1 + \frac{1}{m} + \gamma H}
    + \abs{s-v}^{(\gamma + \frac{1}{m})H - (1 + \gamma + \frac{1}{m})} \abs{t-s}^{1 + \gamma + \frac{1}{m}}  \\
    &+ \indic_{\{H < \frac{1}{2} \}} \abs{s-v}^{(\frac{1}{m} - \gamma)H - (1 + \frac{1}{m})}
    \abs{t-s}^{1 + 2H \gamma + \frac{1}{m}} \big].
  \end{align*}

  After this long calculation, we conclude
  \begin{equation}\label{eq:estimate_of_R_1}
    \norm{R_1}_{L_m(\P)} \lesssim  \chi_s^{-\frac{1}{m}} e^{-\frac{c_1 a^2}{\chi_s^2}}
    \Big(\frac{t-s}{s-v} \Big)^{\min\{1, 2H\} - \frac{H}{m}} \abs{t-s}^{(\gamma + \frac{1}{m}) H}
  \end{equation}
  if $\frac{t-s}{s-v}$ is sufficiently small.

  By \eqref{eq:kappa_asymptotics}, we have
  \begin{equation*}
    \frac{\kappa_{s,t}^{\gamma+1} e^{-\frac{(y_s - a)^2}{2 \sigma_s^2}}}{ \sqrt{2 \pi} \sigma_s} \int_0^{\infty}
    \abs{x}^{\gamma+1} \frac{e^{-\frac{x^2}{2}}}{\sqrt{2 \pi}} \dd x
    = \fc_{H, \gamma} \frac{e^{-\frac{(y_s - a)^2}{2 \sigma_s^2}} \abs{t-s}^{(\gamma+1)H}}{ \sqrt{2 \pi} \sigma_s} + R_2,
  \end{equation*}
  where
  \begin{equation}\label{eq:estimate_of_R_2}
    \abs{R_2} \lesssim_H \frac{e^{-\frac{(y_s - a)^2}{2 \sigma_s^2}} }{ \sigma_s}
    \Big(\frac{t-s}{s-v} \Big)^{2  \min\{H, 1-H\}} \abs{t-s}^{(\gamma+1)H} .
  \end{equation}
  Therefore,
  \begin{equation*}
    \norm{R_2}_{L_m(\P)}
    \lesssim_{H} \chi_s^{-\frac{1}{m}}
     e^{-\frac{c_1 a^2}{\chi_s^2}}
    \begin{cases}
      \abs{s-v}^{(\frac{1}{m} + 1)H - 2} \abs{t-s}^{2 + (\gamma - 1) H}, &H > \frac{1}{2},\\
      \abs{s-v}^{(\frac{1}{m} - 3)H} \abs{t-s}^{(\gamma + 3) H}, &H < \frac{1}{2}.
    \end{cases}
  \end{equation*}
  This completes the proof of the lemma.
  \end{proof}
  \begin{lemma}
    We have
    \begin{equation}\label{eq:local_time_one_half}
      \norm{L_t(a) - L_s(a)}_{L_m(\P)} \lesssim_T
      \begin{cases}
        \abs{t-s}^{1 - H} f_H(a),  & \hspace{-2cm}\text{for all } H \in (0, 1), \\
        (\expect[B(0)^2] + s^{2H})^{-\frac{1}{2m}}
        \abs{t-s}^{1 - H + \frac{H}{m}} f_H(a), & \text{if } H > \frac{1}{2},
      \end{cases}
    \end{equation}
    where $f_H(a)$ satisfies the estimate \eqref{eq:estimate_f_a}.
    Moreover, if $\frac{t-s}{s-v}$ is sufficiently small, then
    \begin{equation*}
      \expect[L_t(a) - L_s(a) \vert \cF_v] =
      \frac{e^{-\frac{\abs{Y_s - a}^2}{2 \sigma_s^2}}}{\sqrt{2 \pi} \sigma_s} (t-s)
      + \tilde{R},
    \end{equation*}
    where for some $c = c(H, m) > 0$,
    \begin{equation}\label{eq:tilde_R}
      \norm{\tilde{R}}_{L_m(\P)} \lesssim (\expect[B(0)^2] + s^{2H})^{-\frac{1}{2m}}
      e^{-c (\expect[B(0)^2] + T^{2H})^{-1} a^2} \Big(\frac{t-s}{s-v} \Big)^{\frac{1}{m} + (1 - \frac{1}{m})H} \abs{t-s}^{1 - H + \frac{H}{m}}.
    \end{equation}
  \end{lemma}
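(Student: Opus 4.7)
The plan is to prove both estimates by starting from the mollified representation
\begin{equation*}
L^\epsilon_t(a) - L^\epsilon_s(a) = \int_s^t p^\epsilon_a(B_r)\,dr, \qquad p^\epsilon_a(x) \defby (2\pi\epsilon^2)^{-1/2} e^{-(x-a)^2/(2\epsilon^2)},
\end{equation*}
which converges in $L_m(\PP)$ to $L_t(a) - L_s(a)$ by the classical theory of Gaussian local times surveyed in \cite{geman80}. This reduces both assertions to explicit Gaussian computations paralleling those of the preceding lemma.

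First I would establish the conditional formula. With the decomposition $B_r = Y_r + \tilde B_r$, where $\tilde B_r$ is independent of $\cF_v$ and centered Gaussian with variance $\sigma_r^2 = (2H)^{-1}(r-v)^{2H}$, integrating $p^\epsilon_a$ against the density of $\tilde B_r$ and sending $\epsilon \downarrow 0$ gives
\begin{equation*}
\expect[L_t(a) - L_s(a) \mid \cF_v] = \int_s^t \frac{1}{\sqrt{2\pi}\,\sigma_r} \exp\!\Big(\!-\!\frac{(Y_r - a)^2}{2\sigma_r^2}\Big)\,dr.
\end{equation*}
Taylor-expanding in $r$ around $s$, using $Y_r = Y_s + Y_{s,r}$ and $\sigma_r^2 = \sigma_s^2(1 + O((r-s)/(s-v)))$, isolates the asserted leading contribution $(t-s)(\sqrt{2\pi}\sigma_s)^{-1}\exp(-(Y_s - a)^2/(2\sigma_s^2))$, while every correction picks up an additional factor of $(t-s)/(s-v)$. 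Taking $L_m$-norms and reusing the Gaussian moment estimates \eqref{eq:Y_and_exp_Y}--\eqref{eq:Y_s_t_and_exp_Y} proved in the previous lemma preserves the exponential decay in $a$ and delivers the bound \eqref{eq:tilde_R}.

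For the unconditional bound I would employ the moment identity
\begin{equation*}
\norm{L_t(a) - L_s(a)}_{L_m(\PP)}^m = m! \int_{s < r_1 < \cdots < r_m < t} p_{r_1,\ldots,r_m}(a,\ldots,a)\,dr_1 \cdots dr_m,
\end{equation*}
where $p_{r_1,\ldots,r_m}$ is the joint Gaussian density of $(B_{r_1},\ldots,B_{r_m})$. Berman's local nondeterminism for fBm supplies
$p_{r_1,\ldots,r_m}(a,\ldots,a) \lesssim \prod_{i=1}^m (r_i - r_{i-1})^{-H}\,\exp(-c a^2/(\expect[B(0)^2] + T^{2H}))$ with $r_0 \defby s$, so integration over the simplex produces $\abs{t-s}^{m(1-H)}$ times the desired exponential tail, which yields the first line of \eqref{eq:local_time_one_half}. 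For the sharper bound valid when $H > \tfrac12$, I would extract the unconditional density of $B_s$ at $a$ separately, contributing the factor $(\expect[B(0)^2] + s^{2H})^{-1/2}$, and apply the previous argument to the remaining $m-1$ conditional densities; the constraint \eqref{eq:local_time_m_condition} is exactly what makes the resulting exponent $1 - H + H/m$ exceed $\tfrac12$.

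The main obstacle will be the careful bookkeeping of error terms in the first part while preserving the sharp exponential decay in $a$ at the correct variance scales $\sigma_s$ and $\expect[B(0)^2] + s^{2H}$. Unlike the estimate for $A_{s,t}$ in the preceding lemma, the local time carries no built-in $(t-s)^{1-(1+\gamma)H}$ prefactor to absorb coarse bounds, so each correction arising from the Taylor expansion of $\sigma_r$ and $Y_r$ must be checked against the precise power of $(t-s)/(s-v)$ claimed in \eqref{eq:tilde_R}.
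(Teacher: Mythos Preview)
Your approach to the conditional formula and the bound on $\tilde R$ is essentially the paper's: both derive the identity $\expect[L_t(a)-L_s(a)\mid\cF_v]=\int_s^t (\sqrt{2\pi}\,\sigma_r)^{-1}\exp(-(Y_r-a)^2/(2\sigma_r^2))\,dr$ and then compare the integrand at $r$ with its value at $s$ term by term, invoking the Gaussian moment estimates \eqref{eq:Y_and_exp_Y}--\eqref{eq:Y_s_t_and_exp_Y}. Your Taylor-expansion language is exactly the paper's three-term decomposition $R_3+R_4+R_5$ (difference in $\sigma_r^{-1}$, difference in the exponent's variance, difference in $Y_r$).

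For the unconditional bound \eqref{eq:local_time_one_half} you take a genuinely different route. The paper sets $\bar A_{s,t}(a)=\int_s^t\expect[\delta_a(B_r)\mid\cF_s]\,dr$, notes that $\expect[\delta\bar A_{s,u,t}\mid\cF_s]=0$, and applies L\^e's original stochastic sewing lemma; the needed $L_m$ bound on the germ reduces to the Gaussian estimate already handled in Lemma~\ref{lem:local_time_level_crossing_one_half}. Your argument via the moment identity and Berman's local nondeterminism is the classical route and is perfectly sound: the factorisation $p_{r_1,\ldots,r_m}(a,\ldots,a)=p_{r_1}(a)\prod_{i\ge 2}p_{r_i\mid r_{<i}}(a\mid a,\ldots,a)$ together with $\mathrm{Var}(B_{r_i}\mid B_{r_{<i}})\gtrsim (r_i-r_{i-1})^{2H}$ gives precisely the exponents you claim, and separating the unconditional factor $p_{r_1}(a)\lesssim(\expect[B(0)^2]+s^{2H})^{-1/2}e^{-ca^2/(\expect[B(0)^2]+T^{2H})}$ yields the sharper $H>\tfrac12$ bound after taking $m$-th roots. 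Your approach is more self-contained (modulo LND) and arguably more transparent; the paper's has the virtue of staying within the sewing framework and reusing machinery already in place. One small point you should address explicitly: the moment identity $\|L_t(a)-L_s(a)\|_{L_m}^m=m!\int_\Delta p_{r_1,\ldots,r_m}(a,\ldots,a)\,dr$ is valid only for integer $m$, so you need a line of interpolation to reach general $m\in[2,\infty)$.
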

  \begin{proof}
    The estimate in~\eqref{eq:local_time_one_half} follows from \cite[(3.38)]{ayache08}.
    However, since this is not entirely obvious, we sketch here an alternative derivation, which is motivated by \cite{butkovsky2023stochastic}.
      In view of the formal expression $L_t(a) = \int_0^t \delta_a(B_r) \dd r$, we set
      \begin{equation*}
        \bar{A}_{s, t}(a) \defby \int_s^t \expect[\delta_a(B_r) \vert \cF_s] \dd r
        \defby \int_s^t \sqrt{\frac{H}{\pi c_H (r-s)^{2H}}} e^{-\frac{ H(Y_s - a)^2}{c_H (r-s)^{2H}}} \dd r.
      \end{equation*}
      We note $\expect[\delta \bar{A}_{s,u,t}(a) \vert \cF_s] =0$.
      By Lê's stochastic sewing lemma~\cite{khoa20}, to prove \eqref{eq:local_time_one_half}, it suffices to show
      \begin{itemize}
        \item the estimate
        \begin{equation*}
          \norm{e^{-\frac{ H(Y_s - a)^2}{c_H (r-s)^{2H}}}}_{L_m(\P)} \lesssim_{T}
          \begin{cases}
            f_H(a),  & H < \frac{1}{2}, \\
            (\expect[B(0)^2] + s^{2H})^{-\frac{1}{2m}}
            \abs{r-s}^{\frac{H}{m}} f_H(a), & H > \frac{1}{2},
          \end{cases}
        \end{equation*}
        \item and the identity $L_t(a) = \lim_{\abs{\pi} \to 0} \sum_{[u,v] \in \pi} \bar{A}_{u, v}(a)$, where $\pi$ is a partition of $[0, t]$.
      \end{itemize}
      The first point is essentially given in Lemma~\ref{lem:local_time_level_crossing_one_half}. The second point follows from the identity
      \begin{equation*}
        \int_{\R} \Big(\sum_{[u,v] \in \pi} \bar{A}_{u,v}(a) \Big) f(a) \dd a
        = \sum_{[u,v] \in \pi} \int_u^v \expect[f(B_r) \vert \cF_u] \dd r.
      \end{equation*}

    Thus, we now focus on the estimate \eqref{eq:tilde_R}.
    We have the identity
    \begin{equation*}
      \expect[L_t - L_s \vert \cF_v]
      = \int_s^t \frac{e^{-\frac{\abs{Y_r - a}^2}{2 \sigma_r^2}}}{\sqrt{2 \pi} \sigma_r} \dd r.
    \end{equation*}
    Indeed, we can convince ourselves of the validity of the identity from the formal expression
    \begin{equation*}
      L_t - L_s = \int_s^t \delta(B_r - a) \dd r.
    \end{equation*}
    We have the decomposition
    \begin{multline*}
      \frac{e^{-\frac{\abs{Y_r - a}^2}{2 \sigma_r^2}}}{\sigma_r}
      - \frac{e^{-\frac{\abs{Y_s - a}^2}{2 \sigma_s^2}}}{\sigma_s}
      = \Big[\frac{e^{-\frac{\abs{Y_r - a}^2}{2 \sigma_r^2}}}{\sigma_r}
      - \frac{e^{-\frac{\abs{Y_r - a}^2}{2 \sigma_r^2}}}{\sigma_s} \Big] \\
      + \Big[ \frac{e^{-\frac{\abs{Y_r - a}^2}{2 \sigma_r^2}}}{\sigma_s}
      - \frac{e^{-\frac{\abs{Y_r - a}^2}{2 \sigma_s^2}}}{\sigma_s} \Big]
      + \Big[\frac{e^{-\frac{\abs{Y_r - a}^2}{2 \sigma_s^2}}}{\sigma_s}
      - \frac{e^{-\frac{\abs{Y_s - a}^2}{2 \sigma_s^2}}}{\sigma_s}  \Big]
      =: R_3 + R_4 + R_5.
    \end{multline*}
    By \eqref{eq:Y_and_exp_Y}, we obtain
    \begin{equation*}
      \norm{R_3}_{L_m(\P)}
      \lesssim \chi_s^{-\frac{1}{m}} e^{-\frac{c a^2}{2 \chi_s^2}}
      \sigma_r^{\frac{1}{m}}\abs{\sigma_r^{-1} - \sigma_s^{-1}}
      \lesssim \chi_s^{-\frac{1}{m}} e^{-\frac{c a^2}{2 \chi_s^2}}
      \abs{s-v}^{-1-H} \abs{r-s}^{1+\frac{H}{m}}.
    \end{equation*}
    We have
    \begin{equation*}
      \sigma_s \abs{R_4} =  e^{-\frac{\abs{Y_r}^2}{2 \sigma_r^2}}
      \abs{1 - e^{-\frac{Y_r^2}{2} (\frac{1}{\sigma_s} - \frac{1}{\sigma_r})}}
      \lesssim \abs{\sigma_s^{-2} - \sigma_r^{-2}} Y_r^2 e^{-\frac{\abs{Y_r}^2}{2 \sigma_r}}.
    \end{equation*}
    Hence, by \eqref{eq:Y_and_exp_Y},
    \begin{equation*}
      \norm{R_4}_{L_m(\P)}
      \lesssim \chi_s^{-\frac{1}{m}} e^{-\frac{c a^2}{2 \chi_s^2}}\sigma_s^{-1 + \frac{1}{m}} (1 - \sigma_s^2 \sigma_r^{-2})
      \lesssim \chi_s^{-\frac{1}{m}} e^{-\frac{c a^2}{2 \chi_s^2}}
      \abs{s-v}^{-1 + (-1+\frac{1}{m})H} \abs{t-s}.
    \end{equation*}
    To estimate $R_5$, observe
    \begin{equation*}
      \sigma_s \abs{R_6} \lesssim e^{-\frac{\abs{Y_s - a}^2}{8 \sigma_s^2}} \sigma_s^{-1} \abs{Y_{s,r}}
      + \indic_{\{\abs{Y_s - a} \leq 2 \abs{Y_{s,r}} \}}.
    \end{equation*}
    By \eqref{eq:Y_s_t_and_exp_Y},
    \begin{align*}
      \norm{e^{-\frac{\abs{Y_s - a}^2}{8 \sigma_s^2}} \sigma_s^{-2} \abs{Y_{s,r}}}_{L_m(\P)}
      &\lesssim \chi_s^{-\frac{1}{m}} e^{-\frac{c a^2}{2 \chi_s^2}}\sigma_s^{-2 + \frac{1}{m}} \abs{s-v}^{H-1} \abs{t-s} \\
      &\lesssim \chi_s^{-\frac{1}{m}} e^{-\frac{c a^2}{2 \chi_s^2}}\abs{s-v}^{(-1 + \frac{1}{m})H - 1} \abs{t-s}.
    \end{align*}
    To estimate $\P(\abs{Y_s - a} \leq 2 \abs{Y_{s,r}})$, consider the decomposition
    \begin{equation*}
      Y_{s,r} = \expect[Y_s^2]^{-1} \expect[Y_s Y_{s,r}] Y_s + (Y_{s,r} - \expect[Y_s^2]^{-1} \expect[Y_s Y_{s,r}] Y_s).
    \end{equation*}
    If $\frac{t-s}{s-v}$ is sufficiently small, then $\abs{\expect[Y_s^2]^{-1} \expect[Y_s Y_{s,r}]} \leq \frac{1}{4}$.
    Therefore,
    \begin{align*}
      \P(\abs{Y_s - a} \leq 2 \abs{Y_{s,r}})
      &\leq \P( \chi_s \abs{Y - \chi_s^{-1} a} \lesssim_H \abs{s-v}^{H-1} \abs{t-s} \abs{X}) \\
      &\lesssim \chi_s^{-1} e^{-\frac{c a^2}{2 \chi_s^2}} \abs{s-v}^{H-1} \abs{t-s}.
    \end{align*}
    This gives an estimate of $R_5$. Thus, we conclude
    \begin{equation*}
      \expect[L_t - L_s \vert \cF_v]
      =  \frac{e^{-\frac{\abs{Y_s - a}^2}{2 \sigma_s^2}}}{\sqrt{2 \pi} \sigma_s} \abs{t-s} + R_6,
    \end{equation*}
    where
    \begin{equation*}
      \norm{R_6}_{L_m(\P)} \lesssim \chi_s^{-\frac{1}{m}} e^{-\frac{c a^2}{2 \chi_s^2}}
      \Big(\frac{t-s}{s-v} \Big)^{\frac{1}{m} + (1 - \frac{1}{m})H}
      \abs{t-s}^{1 - H + \frac{H}{m}}.
    \end{equation*}
    This completes the proof of the lemma.
  \end{proof}
  Now we can complete the proof of Theorem~\ref{thm:local_time_upcrossing}. The above lemmas show
  \begin{equation*}
    \norm{\fc_{H, \gamma}(L_t(a) - L_t(a)) - A_{s, t}(a)}_{L_m(\P)} \\
      \begin{cases}
        \lesssim_T \abs{t-s}^{1 - H},  & \text{for all } H \in (0, 1), \\
        \lesssim_{T} s^{-\frac{H}{m}}
        \abs{t-s}^{1 - H + \frac{H}{m}}, & \text{if } H > \frac{1}{2},
      \end{cases}
  \end{equation*}
  and if $\frac{t-s}{s-v}$ is sufficiently small, then
  \begin{multline*}
    \norm{\expect[\fc_{H, \gamma}(L_t(a) - L_s(a)) - A_{s, t}(a) \vert \cF_v]}_{L_m(\P)} \\
      \begin{cases}
        \lesssim_T  e^{- c (\expect[B(0)^2] + T^{2H})^{-1} a^2}
        \Big(\frac{t-s}{s-v} \Big)^{\min\{2H, \frac{1}{m} + H\} - \frac{H}{m}}
        \abs{t-s}^{1 - H},  & H < \frac{1}{2}, \\
        \lesssim_{T} s^{-\frac{H}{m}}
        e^{- c (\expect[B(0)^2] + T^{2H})^{-1} a^2}
        \Big(\frac{t-s}{s-v} \Big)^{\min\{1, \frac{1}{m} + H\} - \frac{H}{m}}\abs{t-s}^{1 - H + \frac{H}{m}},  & H > \frac{1}{2}.
      \end{cases}
  \end{multline*}
  Noting that the exponents satisfy the assumption of Theorem~\ref{thm:generalized_stochastic_sewing}
  or Corollary~\ref{cor:singular_generalized_stochastic_sewing_lemma},
  Remark~\ref{rem:convergence_rate} implies
  \begin{equation}\label{eq:local_time_convergence_rate}
    \norm{\fc_{H, \gamma} L_T(a) - \sum_{[s,t] \in \pi} A_{s, t}(a)}_{L_m(\P)}
    \lesssim_{T} e^{- c (\expect[B(0)^2] + T^{2H})^{-1} a^2}
     \abs{\pi}^{\epsilon}.
  \end{equation}
  Hence,
  we complete the proof of Theorem~\ref{thm:local_time_upcrossing}.
\end{proof}
\begin{corollary}\label{cor:number_level_crossing}
  We have
  \begin{equation*}
    \lim_{n \to \infty} \Big(\frac{T}{n} \Big)^{1-H} \#
    \set{k \in \{1, \ldots, n\} \given B_{\frac{(k-1)T}{n}} < a < B_{\frac{kT}{n}}}
    = \sqrt{\frac{c_H}{2 \pi}} L_T(a),
  \end{equation*}
  where the convergence is in $L_m(\P)$ with $m$ satisfying \eqref{eq:local_time_m_condition}.
\end{corollary}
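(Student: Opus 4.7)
The plan is to apply Theorem~\ref{thm:local_time_upcrossing} directly, specializing to $\gamma = 0$ and to the uniform partition $\pi_n \defby \set{kT/n \given 0 \le k \le n}$ of $[0,T]$. For this choice, every interval $[s,t] \in \pi_n$ has the common length $t-s = T/n$, and the factor $\abs{B_t - B_s}^{\gamma}$ collapses to $1$, so the Riemann-type sum appearing in Theorem~\ref{thm:local_time_upcrossing} becomes
\begin{equation*}
  \sum_{[s,t] \in \pi_n,\, B_s < a < B_t} (t-s)^{1-H}
  = \left(\frac{T}{n}\right)^{1-H} \# \set{k \in \{1,\ldots,n\} \given B_{(k-1)T/n} < a < B_{kT/n}},
\end{equation*}
which is exactly the left-hand side of the corollary. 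Since $\abs{\pi_n} = T/n \to 0$, Theorem~\ref{thm:local_time_upcrossing} yields convergence in $L_m(\P)$ to $\fc_{H,0} L_T(a)$ under the hypothesis~\eqref{eq:local_time_m_condition}.

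The only remaining ingredient is to identify the constant. Since $xe^{-x^2/2}$ has antiderivative $-e^{-x^2/2}$, we compute $\int_0^{\infty} x \frac{e^{-x^2/2}}{\sqrt{2\pi}} \dd x = \frac{1}{\sqrt{2\pi}}$, and therefore
\begin{equation*}
  \fc_{H,0} = c_H^{\frac{1}{2}} \cdot \frac{1}{\sqrt{2\pi}} = \sqrt{\frac{c_H}{2\pi}},
\end{equation*}
which matches the stated constant. There is no substantive obstacle: the corollary is essentially a direct specialization of Theorem~\ref{thm:local_time_upcrossing}, with only the elementary Gaussian moment computation above, and one could in fact extract a quantitative rate of convergence from the estimate~\eqref{eq:local_time_convergence_rate}.
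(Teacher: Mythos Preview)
Your proof is correct and takes exactly the same approach as the paper, which simply states that the claim is the special case $\gamma = 0$ of Theorem~\ref{thm:local_time_upcrossing}. Your write-up is in fact more detailed, since you explicitly verify the identification of the constant $\fc_{H,0} = \sqrt{c_H/(2\pi)}$.
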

\begin{proof}
  The claim is a special case of Theorem~\ref{thm:local_time_upcrossing} with $\gamma = 0$.
  When $m = 2$, it is proved in \cite[Theorem~5]{azais90}.
\end{proof}

{For applications to pathwise stochastic calculus, a representation of the local time as in (b) above is more useful. In \cite[Theorem~3.2]{cont19} a pathwise It\^o-Tanaka formula is derived under the assumption that
\begin{equation}\label{eq:tilde-L-def}
	\tilde{L}^{\pi}_T(a) \defby \sum_{[s,t] \in \pi: B_s < a < B_t} \abs{B_t - a}^{\frac{1}{H} - 1}
\end{equation}
converges weakly in $L_m(\R)$ for some $m > 1$. But as already suggested by \cite[Lemma~3.5]{cont19}, this weak convergence in $L_m(\R)$ follows from our convergence result in Theorem~\ref{thm:local_time_upcrossing}:

\begin{corollary}\label{cor:local_time_discretization}
	Let $B \in C([0,T], \R)$ and for any partition $\pi$ of $[0,T]$ let $\tilde{L}^{\pi}_T(a)$ be defined as in~\eqref{eq:tilde-L-def}.
  Assume that $m > 1$ and that 
  $(\pi_n)$ is a sequence of partitions of $[0,T]$ such that $\lim_{n \to \infty} \sup_{[s,t] \in \pi_n} |B_t - B_s| = 0$ and for a limit $L_T \in L_m(\R)$:
  \begin{equation}\label{eq:almost_sure_convergence_local_time}
    \lim_{n \to \infty} \int_{\R} \abs{\sum_{[s, t] \in \pi_n: B_s < a < B_t} \abs{B_t - B_s}^{\frac{1}{H} - 1}
    - \fc_{H, \frac{1}{H} - 1} L_T(a)}^m \dd a = 0.
  \end{equation}
  Then
  \begin{equation*}
    \lim_{n \to \infty} \tilde{L}^{\pi_n}_T(\cdot) =  H \fc_{H, \frac{1}{H} - 1} L_T(\cdot)
    \quad \text{weakly in } L_m(\R).
  \end{equation*}
\end{corollary}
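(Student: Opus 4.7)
The plan is to test $\tilde{L}^{\pi_n}_T$ against $\phi \in C_c(\R)$, express the pairing in terms of the already convergent density
$$U^{\pi_n}(a) \defby \sum_{[s, t] \in \pi_n: B_s < a < B_t} |B_t - B_s|^{\frac{1}{H} - 1},$$
and extend by density. The key deterministic identity driving the comparison is
$$\int_{B_s}^{B_t} |B_t - a|^{\frac{1}{H} - 1} \, da = H (B_t - B_s)^{\frac{1}{H}},$$
which exposes a common leading term in $\int_{B_s}^{B_t} |B_t - a|^{\frac{1}{H} - 1} \phi(a) \, da$ and $|B_t - B_s|^{\frac{1}{H} - 1}\int_{B_s}^{B_t}\phi(a)\,da$ once $\phi$ is expanded around $B_t$.

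First, I would record uniform boundedness of $(\tilde{L}^{\pi_n}_T)_n$ in $L_m(\R)$. Since $\frac{1}{H} - 1 > 0$ and $|B_t - a| < |B_t - B_s|$ whenever $a \in (B_s, B_t)$, the pointwise bound $\tilde{L}^{\pi_n}_T(a) \le U^{\pi_n}(a)$ holds, and assumption \eqref{eq:almost_sure_convergence_local_time} gives $\sup_n \|\tilde{L}^{\pi_n}_T\|_{L_m(\R)} < \infty$. Because $m > 1$, weak convergence in $L_m(\R)$ reduces, via this uniform bound and a standard density argument, to verifying convergence of the pairings on the dense class $C_c(\R) \subset L_{m/(m-1)}(\R)$.

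Fix $\phi \in C_c(\R)$ with modulus of continuity $\omega_\phi$. The two termwise expansions
\begin{align*}
\int_{B_s}^{B_t} |B_t - a|^{\frac{1}{H} - 1}\phi(a)\,da &= \phi(B_t)\, H(B_t - B_s)^{\frac{1}{H}} + O\bigl(\omega_\phi(|B_t - B_s|)\, H(B_t - B_s)^{\frac{1}{H}}\bigr), \\
|B_t - B_s|^{\frac{1}{H} - 1}\int_{B_s}^{B_t}\phi(a)\,da &= \phi(B_t)(B_t - B_s)^{\frac{1}{H}} + O\bigl(\omega_\phi(|B_t - B_s|)(B_t - B_s)^{\frac{1}{H}}\bigr),
\end{align*}
when summed over $[s,t] \in \pi_n$ with $B_s < B_t$ and subtracted, produce
$$\int_\R \tilde{L}^{\pi_n}_T\, \phi\,da = H\int_\R U^{\pi_n}\phi\,da + E_n, \qquad |E_n| \le 2H\,\omega_\phi(\eta_n)\, S_n,$$
where $\eta_n \defby \sup_{[s,t] \in \pi_n}|B_t - B_s|$ and $S_n \defby \sum_{[s,t]: B_s < B_t}(B_t - B_s)^{\frac{1}{H}} = \int_\R U^{\pi_n}\,da$.

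To conclude, I would argue that $S_n$ is bounded uniformly in $n$: every $U^{\pi_n}$ is supported in the bounded interval $[\min_{[0,T]} B, \max_{[0,T]} B]$, so Hölder's inequality together with the $L_m(\R)$-boundedness of $U^{\pi_n}$ controls $\int_\R U^{\pi_n}\,da$ uniformly. Combined with $\eta_n \to 0$ by hypothesis and the uniform continuity of $\phi$, this forces $E_n \to 0$, while $\int_\R U^{\pi_n}\phi\,da \to \fc_{H, \frac{1}{H} - 1}\int_\R L_T\phi\,da$ by assumption \eqref{eq:almost_sure_convergence_local_time}. The only real obstacle is the uniform control of $S_n$; once this and the exact identity $\int_{B_s}^{B_t}|B_t-a|^{\frac{1}{H}-1}\,da = H(B_t-B_s)^{\frac{1}{H}}$ are in hand, the remainder is bookkeeping.
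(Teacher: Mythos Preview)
Your proof is correct and follows essentially the same approach as the paper: both use the pointwise domination $\tilde{L}^{\pi_n}_T \le U^{\pi_n}$ for $L_m$-boundedness, reduce to testing against $\phi \in C_c(\R)$, exploit the identity $\int_{B_s}^{B_t}|B_t-a|^{\frac{1}{H}-1}\,da = H(B_t-B_s)^{\frac{1}{H}}$, and control the remainder via $\omega_\phi(\eta_n)$ times an $L_1$-mass that is bounded by H\"older and compact support. The only cosmetic difference is that the paper subtracts the average $B_{s,t}^{-1}\int_{B_s}^{B_t}\phi$ instead of expanding around $\phi(B_t)$, leading to an error controlled by $\int_\R \tilde{L}^{\pi_n}_T\,da$ rather than your $S_n=\int_\R U^{\pi_n}\,da$; since one dominates the other, this makes no difference.
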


\begin{remark}
  If $B$ is a sample path of the fractional Brownian motion with Hurst index $H \in (0,1)$, then by Theorem~\ref{thm:local_time_upcrossing} the convergence \eqref{eq:almost_sure_convergence_local_time} holds in probability for any sequence of partitions $(\pi_n)_{n \in \N}$,
  provided that $m$ satisfies \eqref{eq:local_time_m_condition}. Therefore, we can find a subsequence so that the convergence along the subsequence holds almost surely. In fact, by~\eqref{eq:local_time_convergence_rate} we even control the convergence rate in terms of the mesh size of the partition, and this easily gives us specific sequences of partitions along which the convergence holds almost surely and not only in probability. 
  For example, \tymchange{if $\pi_n$ is the $n$th dyadic partition of $[0, T]$, the estimate \eqref{eq:local_time_convergence_rate} gives 
  \begin{align*}
    \norm{\fc_{H, \gamma} L_T(a) - \sum_{[s,t] \in \pi_n} A_{s, t}(a)}_{L_m(\P)}
    \lesssim_{T} e^{- c (\expect[B(0)^2] + T^{2H})^{-1} a^2}
     2^{-n \epsilon}.
  \end{align*}
  Since the right-hand side is summable with respect to $n$, the Borel--Cantelli lemma implies the almost sure convergence.
  } Along any such sequence of partitions, we therefore obtain the a.s. weak convergence of $\tilde L^{\pi_n}_T$ in $L_m(\R)$.
\end{remark}

\begin{proof}
  Set
  \begin{equation*}
    \tilde{A}_{s, t}(a) \defby (\abs{B_t - a}^{\frac{1}{H} - 1} - H \abs{B_t - B_s}^{\frac{1}{H} - 1}) \indic_{\{B_s < a < B_t \}}.
  \end{equation*}
  It suffices to show that $\sum_{[s,t] \in \pi_n} \tilde{A}_{s, t}(\cdot)$ converge weakly to $0$ in $L_m(\R)$.
  Since
  \[
  	|\tilde{A}_{s, t}(a)| \le \min\{H, 1-H\} \abs{B_t - a}^{\frac{1}{H} - 1}\indic_{\{B_s < a < B_t \}}
  \]
  and since $\sum_{[s,t] \in \pi_n} \abs{B_t - a}^{\frac{1}{H} - 1}\indic_{\{B_s < a < B_t \}}$ is bounded in $L_m(\P)$ by assumption, it suffices to show that
  \begin{equation*}
    \lim_{n \to \infty} \int_{\R} \Big( \sum_{[s,t] \in \pi_n} \tilde{A}_{s, t}(a) \Big) g(a) \dd a = 0
  \end{equation*}
  for every compactly supported continuous function $g$.
  Since for $B_s < B_t$ we have
  \begin{equation*}
    B_{s,t}^{-1}\int_{B_s}^{B_t} \abs{B_t - a}^{\frac{1}{H} - 1} \dd a = H \abs{B_{s, t}}^{\frac{1}{H} - 1},
  \end{equation*}
  we obtain
  \begin{equation*}
    \int_{B_s}^{B_t} \tilde{A}_{s, t}(a) g(a) \dd a
    = \int_{B_s}^{B_t} \abs{B_t - a}^{\frac{1}{H} - 1} (g(a) - B_{s,t}^{-1} \int_{B_s}^{B_t} g(x) \dd x) \dd a.
  \end{equation*}
  Therefore,
  \begin{align*}
     \int_{\R} \Big( \sum_{[s,t] \in \pi_n} \tilde{A}_{s, t}(a) \Big) g(a) \dd a
    &= \sum_{[s, t] \in \pi_n} \int_{B_s}^{B_t} \abs{B_t - a}^{\frac{1}{H} - 1} (g(a) - B_{s,t}^{-1} \int_{B_s}^{B_t} g(x) \dd x) \dd a \\
    &\leq \int_{\R} \tilde{L}^{\pi_n}_T(a) \dd a
    \times \sup_{\abs{x-y} \leq \sup_{[s,t] \in \pi} \abs{B_t - B_s}} \abs{g(x) - g(y)}
  \end{align*}
  which converges to $0$.
\end{proof}
}

\begin{remark}
  As noted in \cite{mukeru17}, we can use Theorem~\ref{thm:local_time_upcrossing} to simulate the local time of a fractional Brownian motion,
  see Figure \ref{fig:fbm_local_time_0.1} ($H = 0.1$) and Figure \ref{fig:fbm_local_time_0.7} ($H=0.6$)\footnote{Fractional Brownian motions are
  simulated by the Python package \texttt{fbm}: https://pypi.org/project/fbm/}.
\end{remark}
\begin{figure}[p]
\centering
  \includegraphics[width=\textwidth]{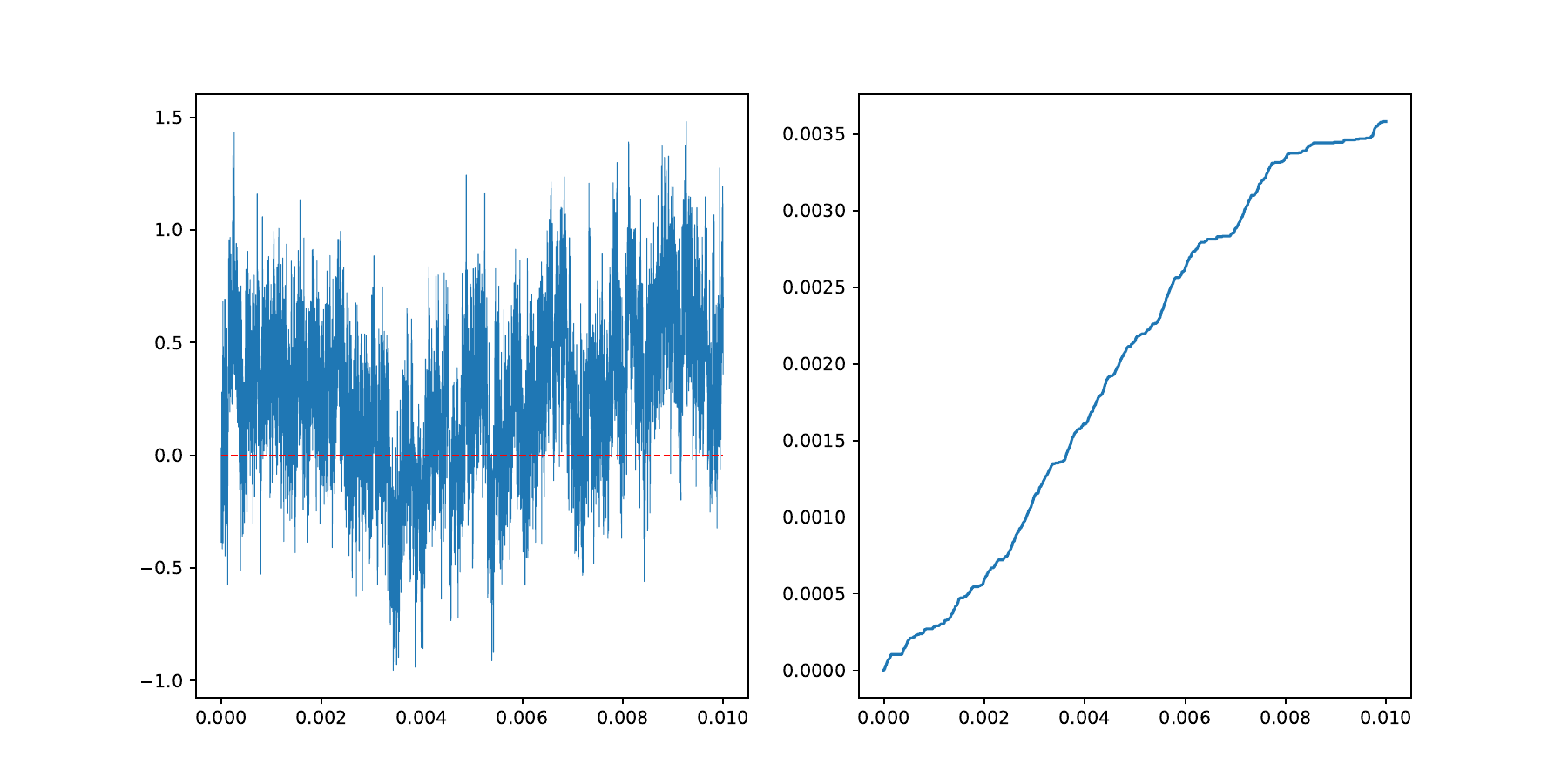}
  \caption{Left: a fractional Brownian motion with $H=0.1$, right: its local time at $0$  \label{fig:fbm_local_time_0.1}}
  \includegraphics[width=\textwidth]{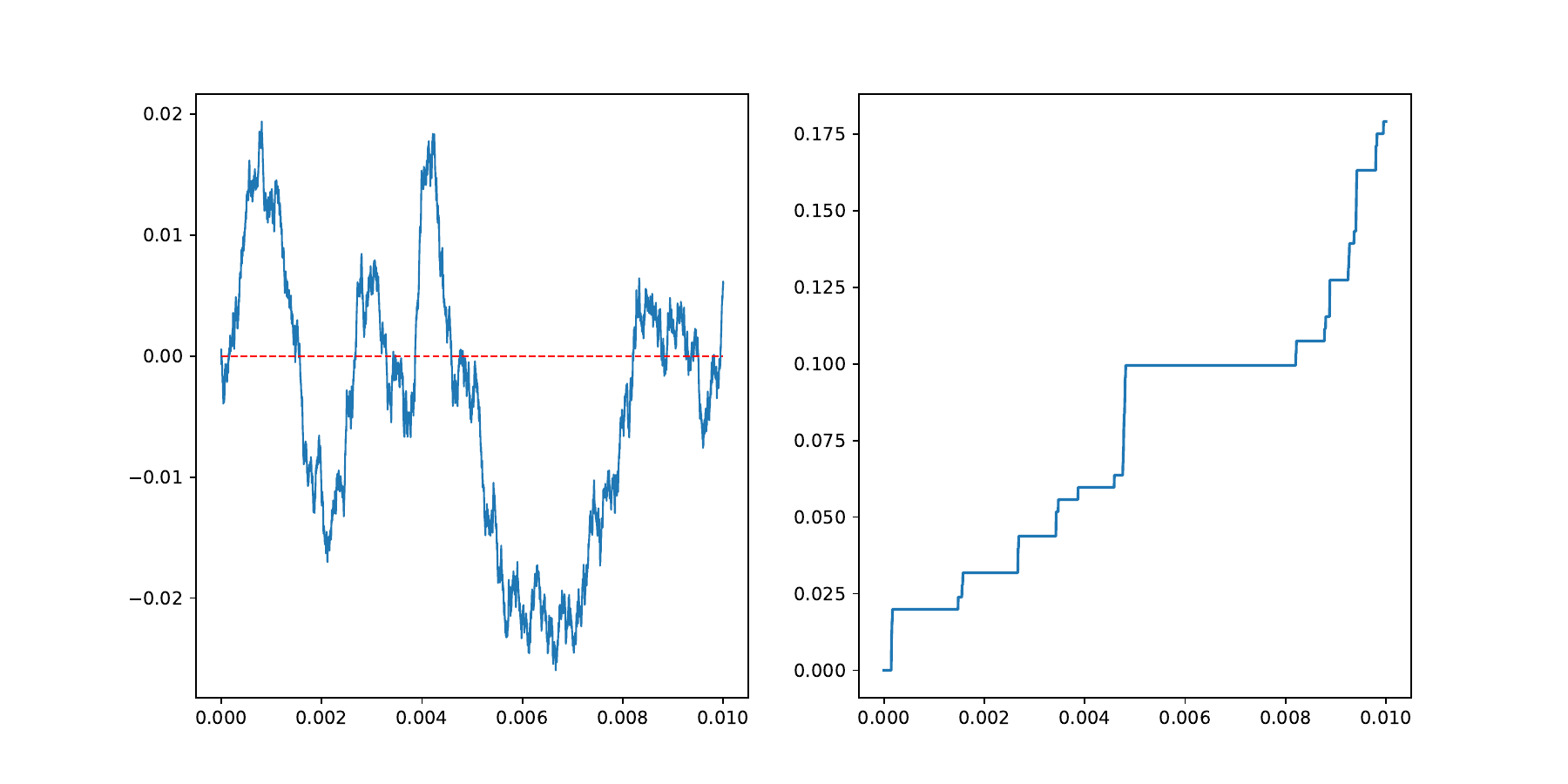}
  \caption{Left: a fractional Brownian motion with $H=0.6$, right: its local time at $0$ \label{fig:fbm_local_time_0.7}}
\end{figure}

\section{Regularization by noise for diffusion coefficients}\label{sec:regularization_by_noise}
Let $y \in C^{\alpha} ([0, T] , \mathbb{R}^d)$ with $\alpha \in (
\frac{1}{2}, 1 )$. We consider a Young differential equation
\begin{equation}
  \dd x_t = b (x_t) \dd t + \sigma (x_t) \dd y_t, \quad x_0 = x.
  \label{eq:young}
\end{equation}
We suppose that the drift coefficient $b$ belongs to $C^1_b (\mathbb{R}^d , \mathbb{R}^d)$, where $C^1_b(\mathbb{R}^d , \mathbb{R}^d)$ is the space of
continuously differentiable bounded functions between $\R^d$ with bounded derivatives. If the diffusion coefficient $\sigma$ belongs to  $C^1_b
(\mathbb{R}^d , \cM_d)$, where $\cM_d$ is the space of $d\times d$ matrices, then we can prove the existence of
a solution to {\eqref{eq:young}}. However, to prove the uniqueness of
solutions, the coefficient $\sigma$ needs to be more regular.
The following result is well-known (e.g. \cite{lyons94}), but we give a proof for the sake of later discussion.

\begin{proposition}\label{prop:uniqueness_young}
  Let $b \in C^1_b(\R^d, \R^d)$ and  $\sigma
  \in C^{1 + \delta}(\R^d, \cM_d)$ with $\delta > \frac{1-\alpha}{\alpha}$. Then the Young differential equation
  {\eqref{eq:young}} has a unique solution.
\end{proposition}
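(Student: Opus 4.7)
The plan is to linearize the difference of two solutions and derive a short-time Hölder estimate that can be iterated. Existence is standard and follows from a compactness/fixed-point argument using $\sigma \in C^1_b$ (which is implied by $C^{1+\delta}$), so the focus is uniqueness. Given two solutions $x, \tilde{x}$ with the same initial condition, set $z := x - \tilde{x}$ and write, via the fundamental theorem of calculus,
\[
  \sigma(x_r) - \sigma(\tilde{x}_r) = g_r z_r, \qquad g_r := \int_0^1 D\sigma(\tilde{x}_r + \theta z_r) \dd \theta.
\]
Since $D\sigma \in C^\delta$ and $x, \tilde{x} \in C^\alpha$, the composition $g$ lives in $C^{\alpha\delta}$ in time with norm controlled by $\|\sigma\|_{C^{1+\delta}}$ and the a priori $\alpha$-Hölder norms of $x, \tilde{x}$. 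The hypothesis $\delta > (1-\alpha)/\alpha$ is exactly $\alpha + \alpha\delta > 1$, the Young condition that makes $\int g_r z_r \dd y_r$ well defined and yields the remainder bound
\[
  \left| \int_u^v g_r z_r \dd y_r - g_u z_u (y_v - y_u) \right| \lesssim [gz]_{\alpha\delta, [u,v]} [y]_\alpha (v-u)^{\alpha + \alpha\delta}.
\]

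On a subinterval $[s, s+h]$, I would insert this remainder into the integral identity for $z_v - z_u$, bound the drift contribution by $\|Db\|_\infty \sup_{[s,s+h]} |z| \cdot (v-u)$, and apply the product rule $[gz]_{\alpha\delta} \lesssim \|z\|_\infty [g]_{\alpha\delta} + \|g\|_\infty [z]_{\alpha\delta}$ together with $\|z\|_{\infty, [s,s+h]} \leq |z_s| + h^\alpha [z]_{\alpha, [s,s+h]}$ and the interpolation $[z]_{\alpha\delta, [s,s+h]} \leq h^{\alpha(1-\delta)} [z]_{\alpha, [s,s+h]}$ (assuming $\delta \leq 1$ without loss of generality, since the condition is $\delta > (1-\alpha)/\alpha < 1$). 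Dividing by $(v-u)^\alpha$ and taking the supremum in $s \leq u < v \leq s+h$ should produce an inequality of the form
\[
  [z]_{\alpha, [s, s+h]} \leq C_1 |z_s| + C_2 h^{\varepsilon} [z]_{\alpha, [s, s+h]}, \qquad \varepsilon > 0,
\]
with $C_1, C_2$ depending only on $\|b\|_{C^1}$, $\|\sigma\|_{C^{1+\delta}}$, $[y]_\alpha$ and the global $C^\alpha$-norms of $x, \tilde{x}$. For $h$ small enough to force $C_2 h^\varepsilon \leq 1/2$, the last term can be absorbed, giving $[z]_{\alpha, [s, s+h]} \leq 2 C_1 |z_s|$ and hence $|z_{s+h}| \leq |z_s|(1 + 2 C_1 h^\alpha)$.

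Since the threshold $h$ depends only on global quantities, I can partition $[0, T]$ into finitely many subintervals of length at most $h$ and iterate the previous estimate starting from $z_0 = 0$, which inductively forces $z_{s_i} = 0$ at every grid point and therefore $z \equiv 0$. The main obstacle I expect is the bookkeeping in the short-time estimate: one must verify that \emph{every} contribution to the right-hand side involving $[z]_{\alpha, [s, s+h]}$ carries a strictly positive power of $h$, coming either from the drift (via $h^{1-\alpha}$) or from the Young remainder (via $h^{\alpha\delta}$). The strict inequality $\delta > (1-\alpha)/\alpha$ is precisely what guarantees the Young exponent is supercritical, and it enters the absorption step in an essential way.
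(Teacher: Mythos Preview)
Your proof is correct and shares the same key idea as the paper: linearize the difference $z = x - \tilde{x}$ via the fundamental theorem of calculus, so that the regularity hypothesis $\delta > (1-\alpha)/\alpha$ becomes exactly the Young condition $\alpha + \alpha\delta > 1$ needed to control the resulting integral. The paper organizes this slightly differently: it packages \emph{both} the drift and the diffusion contributions into a single $\alpha$-H\"older driver
\[
  v_t = \int_0^t \int_0^1 \nabla b(\theta x^{(1)}_s + (1-\theta) x^{(2)}_s)\,\dd\theta\,\dd s
        + \int_0^t \int_0^1 \nabla \sigma(\theta x^{(1)}_s + (1-\theta) x^{(2)}_s)\,\dd\theta\,\dd y_s,
\]
observes that $z$ solves the \emph{linear} Young equation $\dd z_t = z_t\,\dd v_t$ with $z_0=0$, and then cites the known uniqueness result for linear Young ODEs. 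Your short-time absorption-and-iteration argument is precisely what one would do to \emph{prove} that cited result, so your version is more self-contained while the paper's is more conceptual. Neither buys anything essential over the other here; the paper's packaging into a single driver $v$ is mainly a cleaner bookkeeping device that avoids tracking the drift and the Young remainder separately.
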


\begin{proof}
  The argument is very similar to that of \cite[Theorem~6.2]{khoa20}.
  Let $x^{(i)}$ ($i = 1, 2$) be two solutions to {\eqref{eq:young}}. Then,
  \begin{eqnarray*}
    x^{(1)}_t - x^{(2)}_t & = & \int_0^t \{ b (x^{(1)}_s) - b (x^{(2)}_s) \}
    \dd s + \int_0^t \{ \sigma (x^{(1)}_s) - \sigma (x^{(2)}_s) \} \dd
    y_s.\\
    & = & \int_0^t \{ x^{(1)}_s - x^{(2)}_s \} \dd v_s,
  \end{eqnarray*}
  where
  \[ v_t \defby \int_0^t \int_0^1 \nabla b (\theta x^{(1)}_s + (1 - \theta)
     x^{(2)}_s) \dd \theta \dd s + \int_0^t \int_0^1 \nabla \sigma
     (\theta x^{(1)}_s + (1 - \theta) x^{(2)}_s) \dd \theta \dd y_s . \]
  Note that the second term is well-defined as a Young integral since
  \[ s \mapsto \nabla \sigma (\theta x^{(1)}_s + (1 - \theta) x^{(2)}_s) \]
  is $\delta \alpha$-H{\"o}lder continuous and $\delta \alpha + \alpha > 1$ by
  our assumption of $\delta$. Therefore, $x^{(1)} - x^{(2)}$ is a solution
  of the Young differential equation
  \[ \dd z_t = z_t \dd v_t, \quad z_0 = 0. \]
  The uniqueness of this linear Young differential equation is known. Hence $x^{(1)}
  - x^{(2)} = 0.$
\end{proof}

Proposition~\ref{prop:uniqueness_young} is sharp in the sense that for any $\alpha \in (1,2)$ and any $\delta \in (0, \frac{1-\alpha}{\alpha})$, we can find
$\sigma \in C^{\gamma}(\R^2,\cM_2)$ and $y \in C^{\alpha}([0,T],\R^2)$ such that the Young differential equation
\begin{equation*}
  \dd x_t = \sigma(x_t) \dd y_t
\end{equation*}
has more than one solution, see Davie~\cite[Section~5]{davie_2010}.
However, if the driver $y$ is random,
we could hope to obtain the uniqueness of solutions in a probabilistic sense even when the regularity of $\sigma$ does not satisfy the assumption of
Proposition~\ref{prop:uniqueness_young}.
For instance, if the driver $y$ is a Brownian motion and the integral is understood
in It\^o's sense, the condition $\sigma \in C^1_b$ is sufficient to prove pathwise uniqueness.

The goal of this section is to prove the following.
\begin{theorem}\label{thm:regularization_by_noise_for_multiplicative_noise}
  Suppose that $B$ is an $(\cF_t)$-fractional Brownian motion with Hurst parameter $H \in (\frac{1}{2}, 1)$
  in the sense of Definition~\ref{def:fbm}.
  Let $b \in C^1_b
  (\mathbb{R}^d , \mathbb{R}^d)$ and $\sigma \in C^{1}_b (\mathbb{R}^d , \mathcal{M}_d)$.
  Assume one of the following.
  \begin{enumerate}
    \item We have $b \equiv 0$ and $\sigma \in C^{1+\delta} (\mathbb{R}^d , \mathcal{M}_d)$ with
      \begin{equation}\label{eq:delta_H_strong}
       \delta > \frac{(1 - H) (2 - H)}{H (3 - H)}.
      \end{equation}
    \item For all $x \in \R^d$ the matrix $\sigma(x)$ is symmetric and satisfies
    \begin{equation*}
      y \cdot \sigma(x) y > 0, \quad \forall y \in \R^d,
    \end{equation*}
    and $\sigma \in C^{1+\delta}(\mathbb{R}^d , \mathcal{M}_d)$ with $\delta$ satisfying \eqref{eq:delta_H_strong}.
    \item We have $\sigma \in C^{1+\delta}(\mathbb{R}^d ; \mathcal{M}_d)$ with
      \begin{equation}\label{eq:delta_H_weak}
       \delta > \frac{(1-H)(2-H)}{1+H-H^2}.
      \end{equation}
  \end{enumerate}
  Then, for every $x \in \R^d$, there exists
  a unique, up to modifications, process $(X_t)_{t \in [0, \infty)}$ with the following properties.
  \begin{itemize}
    \item The process $(X_t)$ is $(\cF_t)$-adapted and is $\alpha$-Hölder continuous for every $\alpha < H$.
    \item The process $(X_t)$ solves the Young differential equation
    \begin{equation}\label{eq:young_fbm}
   \dd X_t = b (X_t) \dd t + \sigma (X_t) \dd B_t, \quad X_0 = x .
    \end{equation}
  \end{itemize}
  Furthermore, in that case the process $(X_t)_{t \in [0, \infty)}$ is a strong solution, i.e. it is adapted to the natural filtration generated by \tymchange{the Brownian motion} $W$ \tymchange{appearing in the Mandelbrot--van Ness representation \eqref{eq:fbm_def}}.
\end{theorem}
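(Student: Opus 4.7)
The plan splits into pathwise uniqueness for adapted $H^-$-Hölder solutions of~\eqref{eq:young_fbm}, weak existence by mollifying $\sigma$, and strong existence via Yamada--Watanabe. The core is pathwise uniqueness; the uniform estimates it produces give the tightness needed to pass to the limit in the mollified equations, and Yamada--Watanabe is then standard.

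\textbf{Reduction to a sewing estimate.} Let $X, \widetilde X$ be two adapted solutions starting from the same $x$, and put $D \defby X - \widetilde X$. Linearising,
\begin{equation*}
D_t = \int_0^t M_s D_s \dd s + \int_0^t N_s D_s \dd B_s, \quad N_s \defby \int_0^1 \nabla \sigma(\theta X_s + (1-\theta) \widetilde X_s) \dd \theta,
\end{equation*}
and analogously for $M_s$ with $\nabla b$ in place of $\nabla \sigma$. The drift piece is Lipschitz; everything hinges on a sharp bound for the stochastic integral against $B$. For the germ $A_{s,t} \defby N_s D_s B_{s,t}$ one has $\delta A_{s,u,t} = (N_s D_s - N_u D_u) B_{u,t}$, and the classical pathwise bound $|\delta A_{s,u,t}| \lesssim \norm{D}_{L_\infty} |t-s|^{(1+\delta)H^-}$ forces $(1+\delta)H > 1$, the usual Young threshold $\delta > (1-H)/H$. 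I will replace this pathwise argument by Lemma~\ref{lem:stochastic-sewing-tradeoff}, the variant of Theorem~\ref{thm:generalized_stochastic_sewing} announced in the introduction.

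\textbf{Conditional bound and matching exponents.} The $L_m$-bound~\eqref{eq:sewing_one_half} still holds with $\beta_2 = (1+\delta)H$. For~\eqref{eq:sewing_regularization}, condition on $\cF_v$ with $v < s$, decompose $B$ via the van Ness--Mandelbrot representation~\eqref{eq:fbm_def} into its $\cF_v$-measurable component $Y$ and an independent Gaussian increment of variance of order $(s-v)^{2H}$, and expand $\nabla \sigma$ along the fBM path around $Y$ in Wiener chaos, in the spirit of the proof of Proposition~\ref{prop:integral_H_greater_than_one_half}. This produces conditional bounds of the shape $(s-v)^{-\alpha}(t-s)^{\beta_1}$; matching the constraints $\beta_1 > 1$, $\beta_2 > 1/2$, $\beta_1 - \alpha > 1/2$ against the specific exponents coming out of the expansion yields the threshold~\eqref{eq:delta_H_weak} of case~(3). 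The sharper~\eqref{eq:delta_H_strong} in cases~(1) and~(2) is obtained by exploiting extra structure: in~(1), the absence of drift permits a refined germ with a second-order Gubinelli-type compensator built from $\nabla \sigma(X_s) \sigma(X_s)$; in~(2), the symmetry and positivity of $\sigma$ allow a Lamperti-type change of variable turning the equation into one with additive fBM noise, at which point regularization-by-noise techniques for the drift become available (cf.~\cite{khoa20, Gerencser2022, anzeletti21}).

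\textbf{Main obstacle and conclusion.} The principal difficulty is carrying out the Wiener-chaos computation for the composite, random integrand $N_r D_r$, whose regularity itself depends on the unknown solutions, sharply enough to reach the thresholds~\eqref{eq:delta_H_strong} and~\eqref{eq:delta_H_weak}. Once the conditional estimate is in hand, Lemma~\ref{lem:stochastic-sewing-tradeoff} combined with a Gronwall argument in an $L_m$-Hölder norm yields $D \equiv 0$. Weak existence is obtained by mollifying $\sigma$ to $\sigma_n \in C^{\infty}_b$, solving the classical Young SDE for $X^n$ via Proposition~\ref{prop:uniqueness_young}, and extracting a limit by tightness from the uniform Hölder bounds produced above; Yamada--Watanabe then upgrades this to a strong solution adapted to the natural filtration of $W$.
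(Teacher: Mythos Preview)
Your overall architecture (linearise, apply Lemma~\ref{lem:stochastic-sewing-tradeoff} to build the integral of $N_s$ against $B$, conclude by linear-equation uniqueness, then Yamada--Watanabe) matches the paper. But the heart of the argument --- how to get the conditional estimate~\eqref{eq:sewing_regularization} for an integrand that is \emph{not} a function of $B$ --- is missing, and your account of how the three thresholds arise is incorrect.

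\textbf{The missing idea: controlled paths.} You propose to ``expand $\nabla\sigma$ along the fBM path in Wiener chaos''. But the integrand is $\nabla\sigma(Z_r^\theta)$ with $Z_r^\theta=\theta X_r+(1-\theta)\widetilde X_r$, and $Z^\theta$ is not Gaussian, so no Wiener chaos expansion is available. The paper's key step is to observe that any adapted solution is \emph{controlled by $B$}: from the a~priori Young estimate~\eqref{eq:a_priori_R} one has
\[
Z_r^\theta = z^{(1)}(v) + z^{(2)}(v)B_r + z^{(3)}(v)(r-v) + R_{v,r},\qquad |R_{v,r}|\lesssim |r-v|^{\beta},
\]
with $z^{(i)}(v)$ $\cF_v$-measurable. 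One then replaces $Z^\theta$ by the Gaussian (conditionally on $\cF_v$) approximation $\hat Z_r := z^{(1)}(v)+z^{(2)}(v)B_r+z^{(3)}(v)(r-v)$, pays the error $\|f\|_{C^\delta}|R_{v,r}|^\delta\lesssim |r-v|^{\delta\beta}$, and carries out the chaos computation of Proposition~\ref{prop:stratonovich_integral} for $f(\hat Z_r)$. The replacement error is exactly the extra ``trade-off'' term $\Gamma_2|t-v|^{\gamma}|t-s|^{\beta_2}$ in Lemma~\ref{lem:stochastic-sewing-tradeoff}, with $\gamma=\delta\beta$. Without this reduction you have no mechanism to produce the conditional bound at all. (Relatedly, it is cleaner not to put $D_s$ into the germ: one first constructs $V_t=\int_0^t N_s\,\dd B_s$ by sewing $A_{s,t}=\frac{N_s+N_t}{2}B_{s,t}$, then concludes $D\equiv 0$ from uniqueness of the linear Young equation $\dd z=z\,\dd V$.)

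\textbf{How the thresholds actually split.} All three cases come from the \emph{same} Proposition~\ref{prop:stochastic_integral_controlled_path}, with different values of the remainder exponent $\beta$ in the controlled-path expansion above. In case~(a), $b\equiv 0$ gives $z^{(3)}\equiv 0$ and $\beta=2\alpha\uparrow 2H$; plugging $\beta=2H$ into the middle term of~\eqref{eq:delta_greater_than_three_guys} gives $\frac{(1-H)(2-H)}{H(3-H)}$, i.e.~\eqref{eq:delta_H_strong}. In case~(b) the drift is present, so $z^{(3)}\neq 0$, but symmetry and positive definiteness guarantee that $z^{(2)}(v)=\theta\sigma(X_v)+(1-\theta)\sigma(\widetilde X_v)$ is invertible for every $\theta$, which is exactly what Proposition~\ref{prop:stochastic_integral_controlled_path} needs to handle the $z^{(3)}$ term via $(z^{(2)})^{-1}z^{(3)}$; one again gets $\beta=2\alpha$ and the same threshold. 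There is no Lamperti transform (which in dimension $d>1$ would require restrictive commutativity conditions anyway). In case~(c), without invertibility one must absorb $z^{(3)}(v)(r-v)$ into the remainder, forcing $\beta=1$ and yielding the weaker~\eqref{eq:delta_H_weak}. Your ``second-order compensator'' for~(a) and ``Lamperti'' for~(b) are not what is done, and I do not see how either would reach the stated thresholds.
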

\begin{figure}
    \centering
    \includegraphics[scale=0.5]{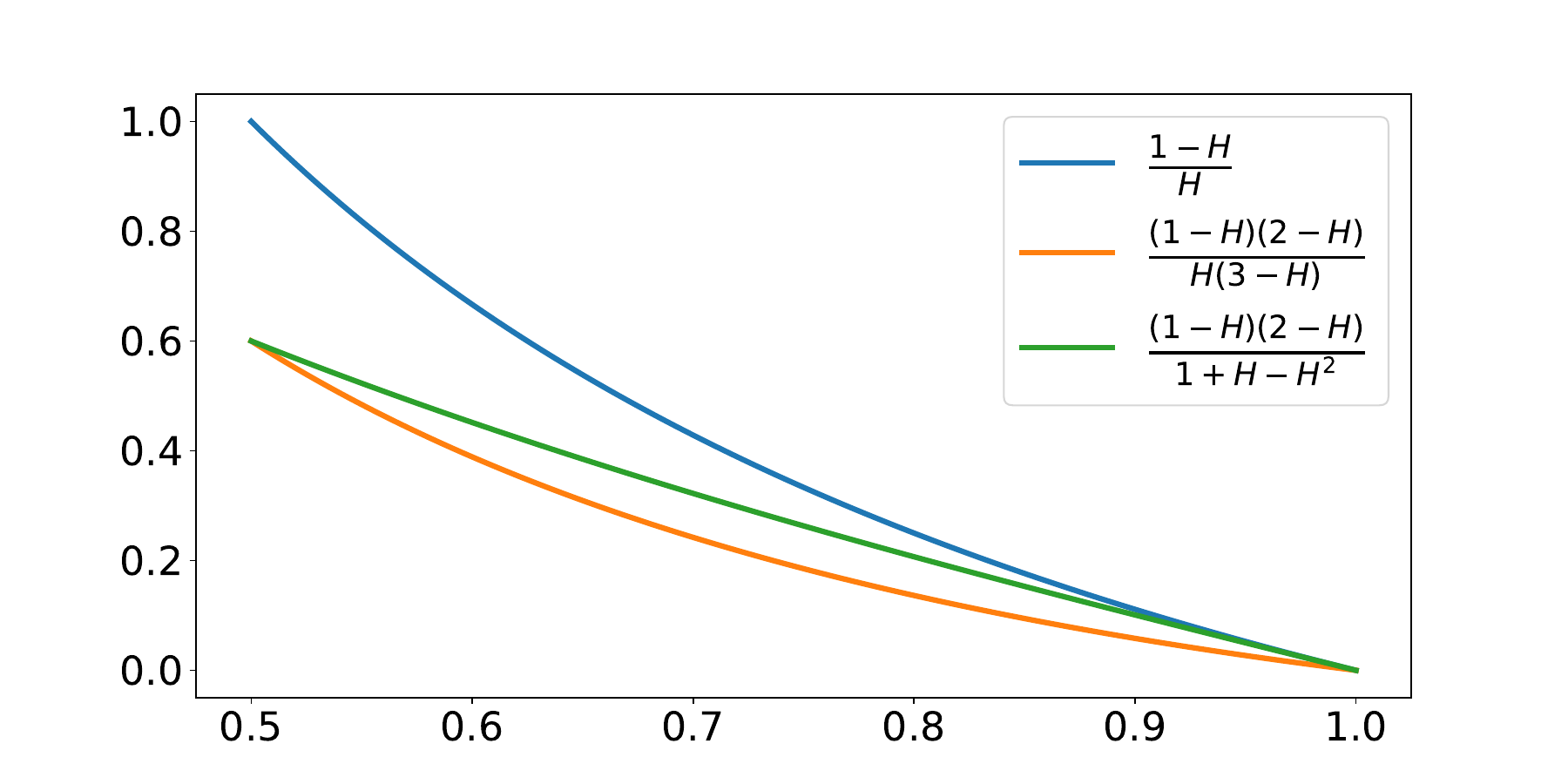}
    \caption{The functions of $H$ appearing in Theorem~\ref{thm:regularization_by_noise_for_multiplicative_noise}.}
\end{figure}
\begin{remark}
  In (b), we assume the positive-definiteness of $\sigma$ to ensure that for every $x, y \in \R^d$ and $\theta \in [0,1]$ the
  matrix
  \begin{equation*}
    \theta \sigma(x) + (1 - \theta) \sigma(y)
  \end{equation*}
  is invertible.
\end{remark}
\begin{remark}
Since the seminal work \cite{catellier16} of Catellier and Gubinelli, many works have appeared to establish
weak or strong existence or uniqueness to the SDE
\begin{equation*}
  \dd X_t = b(X_t) \dd t + \dd B_t
\end{equation*}
for an irregular drift $b$ and a fractional Brownian motion $B$.
In contrast, there are much fewer works that attempt to optimize the regularity of the diffusion coefficient $\sigma$.
The work \cite{hinz2020variability} by Hinz, Tölle and Viitasaari, where $b \equiv 0$, considers certain
existence and uniqueness for $\sigma$ that is merely of bounded variation,
at the cost of additional restrictive assumptions
(variability and \cite[Assumption~3.15]{hinz2020variability}).
It seems that Theorem~\ref{thm:regularization_by_noise_for_multiplicative_noise} is the first result
to improve the regularity of $\sigma$ without any additional assumption (except $\sigma$ being invertible
for the case (b)).
However, we believe that our assumption of $\delta$ is not optimal, see Remark~\ref{rem:optimal_regularity_of_sigma}.
\end{remark}

The proof of Proposition~\ref{prop:uniqueness_young} suggests that the pathwise uniqueness holds if,
for any two $(\cF_t)$-adapted solutions $X^{(1)}$ and $X^{(2)}$ to \eqref{eq:young_fbm}, we can construct the integral
\begin{equation}\label{eq:integral_X}
\int_0^t \int_0^1 \nabla \sigma
     (\theta X^{(1)}_s + (1 - \theta) X^{(2)}_s) \dd \theta \dd B_s.
\end{equation}
If $\theta X^{(1)}_s + (1 - \theta) X^{(2)}_s$ is replaced by $B_s$,
then the integral is constructed in Proposition~\ref{prop:integral_H_greater_than_one_half}.
The difficulty here is that $X^{(i)}$ is not Gaussian and the Wiener chaos decomposition crucially used in
the proof of Proposition~\ref{prop:integral_H_greater_than_one_half} cannot be applied.
Yet, the process $X^{(i)}$ is \emph{locally controlled by the Gaussian process $B$} (whose precise meaning will be clarified later), and by taking advantage of this fact, we can
still make sense of the integral \eqref{eq:integral_X}.

As a technical ingredient, we need a variant of Theorem~\ref{thm:generalized_stochastic_sewing}.
\begin{lemma}
  \label{lem:stochastic-sewing-tradeoff}Let $(A_{s, t})_{0 \leq s < t \leq T}$
  be a family of two-parameter random variables and let $(\mathcal{F}_t)_{t
  \in [0, T]}$ be a filtration such that $A_{s, t}$ is
  $\mathcal{F}_t$-measurable for every $0 \leq s \leq t \leq T$.
  Suppose that for some $m \geq 2$, $\Gamma_1, \Gamma_{2,} \Gamma_3 \in [0,
  \infty)$ and $\alpha, \gamma, \beta_1, \beta_2, \beta_3 \in [0, \infty)$,  we
  have for every $0 \leq v < s < u < t \leq T$
  \begin{align*}
    \| \mathbb{E} [\delta A_{s, u, t} | \mathcal{F}_v] \|_{L_m (\mathbb{P})}
     &\leq \Gamma_1 | s - v |^{- \alpha} | t - s |^{\beta_1} + \Gamma_2 | t - v
     |^{\gamma} | t - s |^{\beta_2}, \quad \text{if } t-s \leq s - v, \\
     \| \delta_{} A_{s, u, t} \|_{L_m (\mathbb{P})} &\leq \Gamma_3 | t - s
     |^{\beta_3}.
  \end{align*}
   Suppose that
  \begin{equation}\label{eq:stochastic_sewing_tradeoff_parameters}
   \min \{ \beta_1, 2 \beta_3 \} > 1, \quad \gamma + \beta_2 > 1, \quad 1 +
     \alpha - \beta_1 < \alpha \min \Big\{ \frac{\gamma + \beta_2 -
     1}{\gamma}, 2 \beta_3 - 1 \Big\} .
  \end{equation}
  Finally, suppose that there exists a stochastic process $(\cA_t)_{t \in [0,T]}$ such that
  \begin{equation*}
    \cA_t = \lim_{\abs{\pi} \to 0; \text{$\pi$ is a partition of $[0,t]$}} \sum_{[u,v] \in \pi} A_{u, v},
  \end{equation*}
  where the convergence is in $L_m(\P)$.
  Then, we have
  \[ \| \mathcal{A}_t - \mathcal{A}_s - A_{s, t} \|_{L_m (\mathbb{P})}
     \lesssim_{\alpha, \gamma, \beta_1, \beta_2, \beta_3} \Gamma_1 | t - s |^{\beta_1 - \alpha} + \Gamma_2 | t - s
     |^{\gamma + \beta_2} + \kappa_{m, d} \Gamma_3 | t - s |^{\beta_3}. \]
\end{lemma}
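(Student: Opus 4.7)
The plan is to adapt the proof of Lemma~\ref{lem:refine_A_t_0_t_1} under the technical condition~\eqref{eq:alpha_and_beta_technical}, now tracking an additional contribution from the second term in the two-part conditional hypothesis. Fix $s<t$ and a partition $s=t_0<t_1<\dots<t_N=t$ of $[s,t]$. Lemma~\ref{lem:dyadic_allocation} yields
\begin{equation*}
  A_{s,t}-\sum_{i=1}^N A_{t_{i-1},t_i}=\sum_{n\in\N_0}\sum_{i=0}^{2^n-1}R_i^n,
\end{equation*}
where each $R_i^n$ is supported on a dyadic sub-interval of length $\lesssim 2^{-n}|t-s|$. For each $n$, pick a step $L_n=\lfloor 2^{\delta n}\rfloor$ with $\delta>0$ to be selected, and split $\sum_i R_i^n$ exactly as in~\eqref{eq:R_n_i_decomposition} into a martingale-difference-type part and a conditional-expectation remainder, indexed by the residue $l\in\{0,\dots,L_n-1\}$ of the position modulo $L_n$.

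Apply the discrete BDG inequality together with Minkowski~\eqref{eq:BDG-Minkowski} fiberwise in $l$ and the $\beta_3$-hypothesis to estimate the first part, yielding
\begin{equation*}
  \Bigl\|\text{martingale-difference part}\Bigr\|_{L_m(\P)}\lesssim \kappa_{m,d}\,\Gamma_3\,L_n^{1/2}\,2^{-n(\beta_3-1/2)}|t-s|^{\beta_3}.
\end{equation*}
For the remainder, use the two-term conditional bound in the regime $t-s\sim 2^{-n}|t-s|$ and $s-v,\,t-v\sim L_n 2^{-n}|t-s|$, valid when $L_n\geq 2$, and apply the triangle inequality:
\begin{equation*}
  \Bigl\|\text{remainder part}\Bigr\|_{L_m(\P)}\lesssim \Gamma_1\,L_n^{-\alpha}\,2^{n(1+\alpha-\beta_1)}|t-s|^{\beta_1-\alpha}+\Gamma_2\,L_n^{\gamma}\,2^{n(1-\gamma-\beta_2)}|t-s|^{\gamma+\beta_2}.
\end{equation*}
The finitely many small-$n$ terms for which $L_n<2$ are absorbed via the direct bound $\sum_i\|R_i^n\|_{L_m(\P)}\lesssim \Gamma_3\,2^{n(1-\beta_3)}|t-s|^{\beta_3}$, contributing a constant multiple of $\Gamma_3|t-s|^{\beta_3}$.

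Substituting $L_n=\lfloor 2^{\delta n}\rfloor$, geometric summability in $n$ demands $\delta<2\beta_3-1$ (BDG part), $\alpha\delta>1+\alpha-\beta_1$ (the $\Gamma_1$ contribution) and $\gamma\delta<\gamma+\beta_2-1$ (the $\Gamma_2$ contribution); the degenerate cases $\alpha=0$ or $\gamma=0$ are handled directly by $\beta_1>1$, respectively $\beta_2>1$. A valid $\delta$ exists precisely under~\eqref{eq:stochastic_sewing_tradeoff_parameters}. Summing the resulting geometric series gives
\begin{equation*}
  \Bigl\|A_{s,t}-\sum_{i=1}^N A_{t_{i-1},t_i}\Bigr\|_{L_m(\P)}\lesssim \Gamma_1|t-s|^{\beta_1-\alpha}+\Gamma_2|t-s|^{\gamma+\beta_2}+\kappa_{m,d}\Gamma_3|t-s|^{\beta_3},
\end{equation*}
and taking $|\pi|\to 0$ together with the hypothesized $L_m$-convergence $\sum_{[u,v]\in\pi}A_{u,v}\to \cA_t-\cA_s$ concludes.

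The main obstacle is the three-way parameter balance: one must simultaneously make the $\Gamma_1$, $\Gamma_2$ and (BDG-mediated) $\Gamma_3$ sums geometrically convergent with a single choice of $\delta$. The third inequality in~\eqref{eq:stochastic_sewing_tradeoff_parameters} is precisely the condition that carves out a non-empty admissible window for $\delta$.
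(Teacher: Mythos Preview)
Your proof is correct and follows essentially the same approach as the paper's own proof: the $L$-step splitting into a martingale-difference part (handled via BDG with the $\beta_3$-bound) and a conditional-expectation remainder (handled via the triangle inequality with the two-term conditional bound), followed by the choice $L=\lfloor 2^{\delta n}\rfloor$ with $\delta$ in the window carved out by~\eqref{eq:stochastic_sewing_tradeoff_parameters}. The only cosmetic difference is that the paper, exploiting the assumed $L_m$-convergence, works directly with the dyadic telescope $A^n_{s,t}-A^{n+1}_{s,t}$ rather than invoking Lemma~\ref{lem:dyadic_allocation} for a general partition; the resulting estimates and parameter constraints are identical.
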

\begin{remark}
  It should be possible to formulate Lemma~\ref{lem:stochastic-sewing-tradeoff} at the generality of Theorem~\ref{thm:generalized_stochastic_sewing}.
  However, such generality is irrelevant to prove Theorem~\ref{thm:regularization_by_noise_for_multiplicative_noise},
  and we do not pursue the generality to simplify the presentation.
\end{remark}

\begin{proof}
  Here we consider dyadic partitions. Fix $s < t$ and set
  \[ A^n_{s, t} \defby \sum_{k = 0}^{2^n - 1} A_{s + \frac{k}{2^n} (t - s),
     s + \frac{k + 1}{2^n} (t - s)} . \]
  Since $\cA_{s,t} = \lim_{n \to \infty} A^n_{s, t}$, it suffices to show
  \begin{equation*}
    \norm{A^{n}_{s,t} - A^{n+1}_{s, t}}_{L_m(\P)}
    \lesssim 2^{-n \delta} (\Gamma_1 | t - s |^{\beta_1 - \alpha} + \Gamma_2 | t - s
     |^{\gamma + \beta_2} + \kappa_{m, d} \Gamma_3 | t - s |^{\beta_3} )
  \end{equation*}
  for some $\delta > 0$ and all sufficiently large $n$.
{As in the proof of Theorem~\ref{thm:generalized_stochastic_sewing} we decompose
	\begin{eqnarray*}
		A^n_{s, t} - A^{n + 1}_{s, t} & = & \sum_{l = 0}^L \sum_{j : l + j L \leq 2^n - 1}^{} \{ \delta A^n_{l
    + j L} -\mathbb{E} [\delta A^n_{l + j L} | \mathcal{F}_{l + (j - 1) L +
    1}^n] \}\\
    &  & + \sum_{l = 0}^L \sum_{j : l + j L \leq 2^n - 1} \mathbb{E} [\delta
    A^n_{l + j L} | \mathcal{F}_{l + (j - 1) L + 1}^n] .
	\end{eqnarray*}
}
  By the BDG inequality,
  \begin{multline*}
    \Big\| \sum_{j : l + j L \leq 2^n - 1}^{} \{ \delta A^n_{l + j L}
       -\mathbb{E} [\delta A^n_{l + j L} | \mathcal{F}_{l + (j - 1) L + 1}^n]
       \} \Big\|_{L_m (\mathbb{P})} \\
       \lesssim \kappa_{m, d} \Big( \sum_{j : l + j L \leq 2^n - 1} \| \delta
       A^n_{l + j L} \|^2_{L_m (\mathbb{P})} \Big)^{\frac{1}{2}}
       \leq \kappa_{m, d} \Gamma_3 \Big( \sum_{j : l + j L \leq 2^n - 1}
       (2^{- n} | t - s |)^{2 \beta_3} \Big)^{\frac{1}{2}}
  \end{multline*}
  Thus,
  \[ \Big\| \sum_{l = 0}^L \sum_{j : l + j L \leq 2^n - 1}^{} \{ \delta
     A^n_{l + j L} -\mathbb{E} [\delta A^n_{l + j L} | \mathcal{F}_{l + (j -
     1) L + 1}^n] \} \Big\|_{L_m(\P)} \lesssim \kappa_{m, d} \Gamma_3 L^{\frac{1}{2}}
     2^{- n ( \beta_3 - \frac{1}{2} )} | t - s |^{\beta_3} . \]
  Furthermore,
  \begin{align*}
    \MoveEqLeft[9]
    \Big\| \sum_{l = 0}^L \sum_{j : l + j L \leq 2^n - 1} \mathbb{E} [\delta
    A^n_{l + j L} | \mathcal{F}_{l + (j - 1) L + 1}^n] \Big\|_{L_m
    (\mathbb{P})} \\
    \leq & \sum_{l = 0}^L \sum_{j : l + j L \leq 2^n - 1} \|
    \mathbb{E} [\delta A^n_{l + j L} | \mathcal{F}_{l + (j - 1) L + 1}^n]
    \|_{L_m (\mathbb{P})}\\
     \leq & \Gamma_1 \sum_{l = 0}^L \sum_{j : l + j L \leq 2^n - 1} \Big(
    \frac{L - 1}{2^n} | t - s | \Big)^{- \alpha} (2^{- n} | t - s
    |)^{\beta_1}\\
      & + \Gamma_2 \sum_{l = 0}^L \sum_{j : l + j L \leq 2^n - 1} \Big(
    \frac{L }{2^n} | t - s | \Big)^{\gamma} (2^{- n} | t - s
    |)^{\beta_2}\\
     \lesssim & \Gamma_1 2^{- n (\beta_1 - \alpha - 1)} L^{- \alpha} | t - s
    |^{\beta_1 - \alpha} + \Gamma_2 2^{- n (\gamma + \beta_2 - 1)} L^{\gamma}
    | t - s |^{\gamma + \beta_2} .
  \end{align*}
  Therefore,
  \begin{multline*}
       \| A^n_{s, t} - A^{n + 1}_{s, t} \|_{L_m (\mathbb{P})}
       \lesssim \Gamma_1 2^{- n (\beta_1 - \alpha - 1)} L^{- \alpha} | t - s
       |^{\beta_1 - \alpha} \\ + \Gamma_2 2^{- n (\gamma + \beta_2 - 1)}
       L^{\gamma} | t - s |^{\gamma + \beta_2} + \kappa_{m, d} \Gamma_3
       L^{\frac{1}{2}} 2^{- n ( \beta_3 - \frac{1}{2} )} | t - s
       |^{\beta_3} .
  \end{multline*}
  We choose $L = \lfloor 2^{n \varepsilon} \rfloor$ with $\varepsilon \in (0, 1)$ so that
  \[ \min \Big\{ \beta_1 - \alpha - 1 + \alpha \varepsilon, \gamma + \beta_2
     - 1 - \gamma \varepsilon, \beta_3 - \frac{1 + \varepsilon}{2} \Big\} >
     0. \]
  Namely,
  \[ \frac{1 + \alpha - \beta_1}{\alpha} < \varepsilon < \min \Big\{
     \frac{\gamma + \beta_2 - 1}{\gamma}, 2 \beta_3 - 1 \Big\} . \]
  Such an $\epsilon$ exists exactly under our assumption \eqref{eq:stochastic_sewing_tradeoff_parameters}.
\end{proof}
We mentioned that a solution to \eqref{eq:young_fbm} is controlled by $B$.
Here comes a more precise statement.
We fix $\alpha \in ( \frac{1}{2}, H )$
and let $X$ be a solution to \eqref{eq:young_fbm}. We have the estimates
\[ \Big| \int_s^t b (X_r) \dd r - b (X_s) (t - s) \Big| \leq \| b
   \|_{C^1_b} \| X \|_{C^{\alpha} }^{} (t - s)^{1 + \alpha}, \]
\begin{equation}\label{eq:sigma_young_approximation}
 \Big| \int_s^t \sigma (X_r) \dd B_r - \sigma (X_s) B_{s, t} \Big|
   \lesssim_{\alpha} \| \sigma \|_{C^1_b} \| X \|_{C^{\alpha} } \| B
   \|_{C^{\alpha} } | t - s |^{2 \alpha} .
\end{equation}
Furthermore, the a priori estimate of the Young differential equation (\cite[Proposition~8.1]{friz2020}) gives
\begin{equation}\label{eq:X_a_priori}
  \norm{X}_{C^{\alpha}([0, T])} \lesssim_{T, \alpha} \abs{x} +
  (\norm{b}_{C^1_b} + \norm{\sigma}_{C^1_b} \norm{B}_{C^{\alpha}})
  (1 + \norm{b}_{C^1_b} + \norm{\sigma}_{C^1_b} \norm{B}_{C^{\alpha}}).
\end{equation}
Therefore, we have
\begin{equation*}
  X_t = X_s + \sigma(X_s) B_{s,t} + b(X_s) (t-s) + R_{s, t},
\end{equation*}
where
\begin{equation}\label{eq:a_priori_R}
  \abs{R_{s,t}} \lesssim_{T, \alpha}
  (\abs{x} + \norm{b}_{C^1_b} + \norm{\sigma}_{C^1_b} \norm{B}_{C^{\alpha}})
  (1 + \norm{b}_{C^1_b} + \norm{\sigma}_{C^1_b} \norm{B}_{C^{\alpha}})^2 \abs{t-s}^{2 \alpha}.
\end{equation}
This motivates the following definition.
\tymchange{Recall that $B$ is an $(\mathcal{F}_t)$-fractional Brownian motion in the sense of Definition~\ref{def:fbm}.}
\begin{definition}
  Let $Z$ be a random path in $C^{\alpha} ([0, T] , \mathbb{R}^d)$. For $\beta \in (\alpha, \infty)$, we write
  $Z \in \mathcal{D} (\alpha, \beta)$ if for every $s < t$, we have
  \begin{equation*}
   Z_t = z^{(1)}_{} (s) +  z^{(2)} (s) B_t + z^{
     (3)} (s) (t - s) + R_{s, t},
  \end{equation*}
  where
  \begin{itemize}
    \item the random variables $z^{(1)}(s), z^{(3)}(s) \in \R^d$ and $z^{(2)}(s) \in \cM_d$
    are $\mathcal{F}_s$-measurable and
    \item there exists a (random) constant $C \in [0, \infty)$ such that for all $s < t$
    \[ | R_{s, t} | \leq C  | t - s
       |^{\beta} . \]
  \end{itemize}
  We set
  \begin{equation*}
    \norm{Z}_{\cD(\alpha,\beta)}
    \defby \norm{Z}_{C^{\alpha}([0, T])} +  \norm{z^{(2)}}_{L_{\infty}([0, T])} + \norm{z^{(3)}}_{L_{\infty}([0, T])}
    + \sup_{0\leq s<t \leq T} \frac{\abs{R_{s,t}}}{(t-s)^{\beta}}.
  \end{equation*}
  Furthermore, we set
  \begin{equation*}
    \cD_0(\alpha, \beta) \defby \set{Z \in \cD(\alpha, \beta) \given z^{(3)} \equiv 0},
  \end{equation*}
  \begin{equation*}
    \cD_1(\alpha, \beta) \defby \set{Z \in \cD(\alpha, \beta) \given z^{(2)}(s) \text{ is invertible for all }s \in [0, T) }
  \end{equation*}
  and $\norm{Z}_{\cD_1(\alpha,\beta)} \defby \norm{Z}_{\cD_1(\alpha,\beta)} + \sup_{s \in [0, T]} \abs{(z^{(2)}(s))^{-1} z^{(3)}(s)}$.
\end{definition}
\begin{proposition}\label{prop:stochastic_integral_controlled_path}
  Let $f \in C^1_b (\mathbb{R}^d ; \mathbb{R}^d)$ and $Z \in
  \mathcal{D}_1(\alpha, \beta)$. If
  \begin{equation}\label{eq:delta_greater_than_three_guys}
    \max \Big\{ \frac{1 - H}{\beta}, \frac{2 - 3 H + H^2}{\beta +
     H - H^2}, \frac{3 - \sqrt{3}}{2 H} - 1 \Big\} < \delta < 1,
  \end{equation}
  and if $\alpha$ is sufficiently close to $H$, then for every $m \in [2, \infty)$,
  \begin{multline*}
    \Big\| \int_s^t f (Z_r) \dd B_r - \frac{f (Z_s) + f (Z_t)}{2} \cdot
     B_{s, t} \Big\|_{L_m (\mathbb{P})} \\ \lesssim_{T, \alpha, \delta, m}
     \norm{f}_{C^{\delta}} \norm{\norm{Z}_{\cD_1(\alpha,\beta)}^{\delta}}_{L_{3m}(\P)}
     (1 + \norm{\norm{Z}_{\cD_1(\alpha,\beta)}}_{L_{3m}(\P)})
     \abs{t-s}^{H + \alpha \delta}.
  \end{multline*}
  If $Z \in \cD_0(\alpha, \beta)$, a similar estimate holds with $\norm{Z}_{\cD_1(\alpha,\beta)}$ replaced by $\norm{Z}_{\cD(\alpha,\beta)}$.
\end{proposition}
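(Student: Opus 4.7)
The plan is to apply Lemma~\ref{lem:stochastic-sewing-tradeoff} to the symmetric germ
\begin{equation*}
A_{s,t} \defby \frac{f(Z_s) + f(Z_t)}{2} \cdot B_{s,t}.
\end{equation*}
Since $H > \frac{1}{2}$, $Z \in C^{\alpha}$ and $f \in C^{\delta}$ with $\alpha$ close to $H$ so that $\alpha(1+\delta) > 1$, the integral $\int_0^T f(Z_r) \dd B_r$ exists as a pathwise Young integral and is the limit in $L_m(\P)$ of the Riemann sums $\sum_{[s,t] \in \pi} A_{s,t}$; this identifies the limit process $\cA$ needed to invoke Lemma~\ref{lem:stochastic-sewing-tradeoff}. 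A direct computation yields $\delta A_{s,u,t} = \frac{f(Z_t) - f(Z_u)}{2} B_{s,u} + \frac{f(Z_s) - f(Z_u)}{2} B_{u,t}$, so the Hölder regularities of $f$, $Z$ and $B$ give immediately
\begin{equation*}
\norm{\delta A_{s,u,t}}_{L_m(\P)} \lesssim \norm{f}_{C^\delta} \norm{\norm{Z}_{\cD_1(\alpha,\beta)}^{\delta}}_{L_{3m}(\P)} \norm{\norm{B}_{C^\alpha}}_{L_{3m}(\P)} |t-s|^{\alpha(1+\delta)},
\end{equation*}
providing the pathwise exponent $\beta_3 = \alpha(1+\delta) > \frac{1}{2}$ in the lemma.

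The heart of the proof is the conditional estimate $\norm{\mathbb{E}[\delta A_{s,u,t} | \mathcal{F}_v]}_{L_m(\P)}$ for $v < s < u < t$ with $t - s \leq s - v$. The plan is to reduce to the Gaussian Stratonovich computation in the proof of Proposition~\ref{prop:stratonovich_integral} by locally linearizing $Z$ at the left endpoint $s$: set
\begin{equation*}
\tilde Z_r \defby z^{(1)}(s) + z^{(2)}(s) B_r + z^{(3)}(s)(r-s), \quad r \in [s,t],
\end{equation*}
so that $Z_r - \tilde Z_r = R_{s,r}$ has size at most $\norm{Z}_{\cD_1(\alpha,\beta)}(r-s)^{\beta}$. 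Splitting $A_{s,t} = A^{\tilde Z}_{s,t} + (A_{s,t} - A^{\tilde Z}_{s,t})$ with $A^{\tilde Z}$ built analogously from $\tilde Z$, the pathwise remainder $\delta A - \delta A^{\tilde Z}$ is bounded in $L_m$ by a multiple of $\norm{f}_{C^\delta} |t-s|^{\alpha + \beta \delta}$ via Hölder regularity. For the principal term, rewriting the drift as a shift of the Gaussian, $\tilde Z_r = z^{(1)}(s) + z^{(2)}(s)[B_r + (z^{(2)}(s))^{-1} z^{(3)}(s)(r - s)]$, explains the role of the $\cD_1$-norm through the invertibility of $z^{(2)}$ and the boundedness of $(z^{(2)})^{-1} z^{(3)}$ (for $Z \in \cD_0(\alpha,\beta)$ the drift vanishes and $\norm{Z}_{\cD(\alpha,\beta)}$ suffices). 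The analysis of $\mathbb{E}[\delta A^{\tilde Z}_{s,u,t} | \mathcal{F}_v]$ then proceeds as in the proof of Proposition~\ref{prop:stratonovich_integral}: decompose $B_r = Y_r + \tilde B_r$ conditionally on $\mathcal{F}_v$, with $\tilde B_r = \int_v^r K_H(r, w) \dd W_w$ independent of $\mathcal{F}_v$, and expand $f \circ \tilde Z$ in Wiener chaoses of $\tilde B$ around $Y$.

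The main obstacle is that the linearization coefficients $z^{(1)}(s), z^{(2)}(s), z^{(3)}(s)$ are $\mathcal{F}_s$-measurable rather than $\mathcal{F}_v$-measurable, hence correlated with $\tilde B$ on $[v,s]$, and cannot be treated as $\mathcal{F}_v$-constants in the chaos expansion. I would handle this by approximating $z^{(i)}(s)$ by their $\mathcal{F}_v$-counterparts $z^{(i)}(v)$, incurring an $L_m$-error of order $\norm{Z}_{\cD_1(\alpha,\beta)}(s-v)^\alpha$ that is absorbed into the singular factor $(s-v)^{-\alpha_*}$ of the final bound at the cost of a controlled loss in the exponent. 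After this reduction, the Gaussian Stratonovich calculation yields a bound of the form $(s-v)^{-\alpha_*}|t-s|^{\beta_1} + (s-v)^{\gamma}|t-s|^{\beta_2}$ with $\beta_1 - \alpha_*$ and $\gamma + \beta_2$ both close to $H + \alpha\delta$. Finally, the three lower bounds on $\delta$ in~\eqref{eq:delta_greater_than_three_guys} translate precisely into the conditions~\eqref{eq:stochastic_sewing_tradeoff_parameters} of Lemma~\ref{lem:stochastic-sewing-tradeoff}, whose conclusion, after letting $\alpha \uparrow H$, yields the claimed estimate of order $|t-s|^{H + \alpha \delta}$.
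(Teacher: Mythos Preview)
Your overall strategy---apply Lemma~\ref{lem:stochastic-sewing-tradeoff} to the symmetric germ, control $\|\delta A_{s,u,t}\|_{L_m}$ by H\"older regularity, and reduce the conditional estimate to the Gaussian Stratonovich computation of Proposition~\ref{prop:stratonovich_integral} after linearising $Z$---matches the paper. The gap is in the linearisation step.

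You linearise $Z$ at the point $s$, obtaining $\tilde Z_r = z^{(1)}(s)+z^{(2)}(s)B_r+z^{(3)}(s)(r-s)$, and then correctly observe that the coefficients $z^{(i)}(s)$ are only $\cF_s$-measurable, which blocks the Wiener-chaos argument. Your proposed remedy, replacing $z^{(i)}(s)$ by $z^{(i)}(v)$ at the cost of an error of order $(s-v)^{\alpha}$ ``absorbed into the singular factor $(s-v)^{-\alpha_*}$'', is not justified by the definition of $\cD(\alpha,\beta)$: the norm only controls $\|z^{(2)}\|_{L_\infty}$, $\|z^{(3)}\|_{L_\infty}$ and the remainder $R$, not H\"older increments of the $z^{(i)}$. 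Moreover the purported error is a \emph{positive} power of $s-v$, so it would feed the $\Gamma_2|t-v|^{\gamma}|t-s|^{\beta_2}$ term of the lemma, not the singular $\Gamma_1(s-v)^{-\alpha}|t-s|^{\beta_1}$ term.

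The paper avoids the obstacle altogether by linearising at $v$ from the start: with $\hat Z_r \defby z^{(1)}(v)+z^{(2)}(v)B_r+z^{(3)}(v)(r-v)$, the coefficients are $\cF_v$-measurable and the chaos expansion goes through directly. The price is that the remainder $Z_r-\hat Z_r=R_{v,r}$ is of size $(r-v)^{\beta}$ rather than $(r-s)^{\beta}$, which produces the term $\|f\|_{C^{\delta}}\,|t-v|^{\delta\beta}|t-s|^{H}$---exactly the $\Gamma_2|t-v|^{\gamma}|t-s|^{\beta_2}$ contribution in Lemma~\ref{lem:stochastic-sewing-tradeoff} with $\gamma=\delta\beta$, $\beta_2=H$. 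Your two-step route $Z\to\tilde Z\to\hat Z$ would in the end yield the same bound, since $\tilde Z_r-\hat Z_r=R_{v,r}-R_{s,r}=O((r-v)^{\beta})$; but the direct linearisation at $v$ is both simpler and makes the exponents transparent.

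A minor point: by bounding $B_{s,t}$ pathwise via $\|B\|_{C^{\alpha}}$ you obtain $\beta_3=\alpha(1+\delta)$, whereas the paper uses $\|B_{s,t}\|_{L_m}\lesssim|t-s|^H$ to get the sharper $\beta_3=H+\alpha\delta$. Both work once $\alpha$ is pushed close to $H$, but the latter is what makes the third term in~\eqref{eq:delta_greater_than_three_guys} come out exactly as stated.
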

\begin{proof}
  Our tool is Lemma~\ref{lem:stochastic-sewing-tradeoff}. Since arguments are similar, we only prove the claim for $Z \in \cD_1(\alpha, \beta)$.
  We set
  \[ A_{s, t} \defby \frac{f (Z_s) + f (Z_t)}{2} \cdot B_{s, t} . \]
  We have
  \[ \delta A_{s, u, t} = - \frac{f (Z_u) - f (Z_s)}{2} B_{u, t} + \frac{f
     (Z_t) - f (Z_u)}{2} B_{s, u} .
  \]
  Hence
  \begin{equation}\label{eq:stochastic_integral_controlled_path_pr1} \| \delta A_{s, u, t} \|_{L_m (\mathbb{P})} \lesssim \| f
     \|_{C^{\delta}} \norm{\norm{Z}_{C^{\alpha}}^{\delta}}_{L_{2m}(\P)}  | t -
     s |^{H + \alpha \delta}.
  \end{equation}
  Let $v < s$ with $t-s \leq s- v$. As $Z \in \mathcal{D}_1 (\alpha, \beta)$, we write
  \[ Z_r = z_{}^{(1)} (v) + z^{(2)}_{} (v) B_r + z^{(3)} (v) (r - v) + R_{v,
     r}, \quad r \in [s, t] . \]
  Then, if we write $\hat{Z}_r \defby z_{}^{(1)} (v) + z^{(2)}_{} (v) B_r +
  z^{(3)} (v) (r - v)$,
  \[ | f (Z_r) - f (\hat{Z}_r) | \leq \| f \|_{C^{\delta}} | R_{v, r}
     |^{\delta} \leq \| f \|_{C^{\delta}} \norm{Z}_{\cD(\alpha,\beta)}^{\delta}  | v - r |^{\delta \beta} . \]
  Hence, if we write $\hat{A}_{s, t} \defby \frac{f (\hat{Z}_s) + f
  (\hat{Z}_t)}{2} \cdot B_{s, t}$, then
  \begin{equation}\label{eq:stochastic_integral_controlled_path_pr2}
	\| \delta A_{s, u, t} - \delta \hat{A}_{s, u, t} \|_{L_m (\mathbb{P})}
     \lesssim_{\alpha, \delta, T}
     \| f \|_{C^{\delta}} \norm{\norm{Z}_{\cD(\alpha,\beta)}^{\delta}}_{L_{2m}(\P)}  | t - v |^{\delta \beta}
      | t - s |^H.
  \end{equation}
  Next, we will estimate $\norm{\mathbb{E} [\delta \hat{A}_{s, u, t} \vert \cF_v]}_{L_m(\P)}$.
  The rest of the calculation resembles the proof of Proposition~\ref{prop:stratonovich_integral}.
  We write $Y_r \defby
  \mathbb{E} [B_r | \mathcal{F}_v]$ and $\tilde{B}_r \defby B_r - Y_r$ as before. We
  can decompose
  \[ \hat{Z}_r = \hat{Y}_r + z^{(2)} (v) \tilde{B}_r, \quad \text{with } \hat{Y}_r \defby
     z_{}^{(1)} (v) + z^{(2)}_{} (v) Y_r + z^{(3)} (v) (r - v), \]
  where $\hat{Y}_r$ is $\mathcal{F}_v$-measurable and $\tilde{B}_r$ is
  independent of $\mathcal{F}_v$. We set
  \[ \hat{a}^{}_0 (r) \defby \mathbb{E} [f (\hat{Y}_r + z^{(2)}_{} (v)
     \tilde{B}_r) | \mathcal{F}_v] =\mathbb{E} [f (z^{(2)} (v) \{ (z^{(2)}
     (v))^{- 1} \hat{Y}^{}_r + \tilde{B}_r \}) | \mathcal{F}_v], \]
  \[ \hat{a}_i^j (r) \defby 2 H (s - v)^{- 2 H} \mathbb{E} [f^j (\hat{Y}_r +
     z^{(2)}_{} (v) \tilde{B}_r) \tilde{B}_r^i | \mathcal{F}_v] . \]
  Then, as in the proof of Proposition~\ref{prop:stratonovich_integral}, we have the decomposition
  \[ \mathbb{E} [\delta \hat{A}_{s, u, t} | \mathcal{F}_v] = \hat{D}^0_{s, u,
     t} + \sum_{i = 1}^d \hat{D}_{s, u, t}^i, \]
  where, as in \eqref{eq:D_0_simple} and \eqref{eq:D_i_simple},
  \[ \hat{D}^0_{s, u, t} \defby (\hat{a}_0 (t) - \hat{a}_0 (u)) \cdot Y_{s,
     u} + (\hat{a}_0 (s) - \hat{a}_0 (u)) \cdot Y_{u, t}, \]
  \begin{align*}
    D^i_{s, u, t} \defby& \expect[(\hat{a}_i^i(s) \tilde{B}_s^i + \hat{a}_i^i(t) \tilde{B}_t^i) \tilde{B}_{s, t}^i \vert \cF_v] \\
    &-\expect[(\hat{a}_i^i(s) \tilde{B}_s^i + \hat{a}_i^i(u) \tilde{B}_u^i) \tilde{B}_{s, u}^i \vert \cF_v]
    -\expect[(\hat{a}_i^i(u) \tilde{B}_u^i + \hat{a}_i^i(t) \tilde{B}_t^i) \tilde{B}_{u, t}^i \vert \cF_v] \\
    =& (\hat{a}_i^i(t) - \hat{a}_i^i(u)) \expect[\tilde{B}_t^i \tilde{B}_{s, t}^i]
    + (\hat{a}_i^i(s) - \hat{a}_i^i(u)) \expect[\tilde{B}_s^i \tilde{B}_{s, t}^i] \\
    &-(\hat{a}_i^i(s) - \hat{a}_i^i(u)) \expect[\tilde{B}_s^i \tilde{B}_{s, u}^i]
    -(\hat{a}_i^i(t) - \hat{a}_i^i(u)) \expect[\tilde{B}_t^i \tilde{B}_{u, t}^i]
  \end{align*}

  The map $\mathbb{R}^d \ni x \mapsto f (z^{(2)} (v)^{} x) \in \mathbb{R}^d$
  belongs to $C^{\delta} (\mathbb{R}^d)$ with its norm bounded by
  \[ | z^{(2)} (v) |^{\delta} \| f \|_{C^{\delta} (\mathbb{R}^d)} . \]
  Therefore, by repeating the argument used to obtain \eqref{eq:estimate_of_abs_D_0}, we obtain
  \begin{multline*}
   | \hat{D}^0_{s, u, t} | \leq | z^{(2)} (v) |^{\delta} \| f
     \|_{C^{\delta} (\mathbb{R}^d)} \Big\{  (s -
     v)^{\delta H - 1} (t - s) (| Y_{s, u} | + | Y_{u, t} |) \\
     + (s - v)^{(\delta - 1) H} \Big( |
     (z^{(2)} (v))^{- 1} \hat{Y}_{s, u} | | Y_{u, t} | + |  z^{(2)}
     (v)^{- 1} \hat{Y}_{u, t} | | Y_{s, u} | \Big)
     \Big\} .
  \end{multline*}
  Referring to \eqref{eq:estimate_of_Y_s_u}, we have
  \[ \| (z^{(2)} (v))^{- 1} \hat{Y}_{s, u} \|_{L_m (\mathbb{P})} \lesssim (s
     - v)^{H - 1} (t - s) +  \| z^{(2)}(v)^{-1} z^{(3)} (v) \|_{L_m
     (\mathbb{P})} (t - s). \]
  Therefore,
  \begin{multline*}
   \| \hat{D}^0_{s, u, t} \|_{L_m (\mathbb{P})} \lesssim \| f
     \|_{C^{\delta}} \| | z^{(2)} (v) |^{\delta} \|_{L_{3 m} (\mathbb{P})} \\
     \times \{
     (s - v)^{(\delta + 1) H - 2} (t - s)^2 +  \| z^{(2)}(v)^{-1} z^{(3)} (v)
     \|_{L_{3 m} (\mathbb{P})} (s - v)^{\delta H - 1} (t - s)^2 \} .
  \end{multline*}

  Similarly as before, we have
  \[ | a^i_i (t) - a^i_i (s) | \lesssim | z^{(2)} (v) |^{\delta} \| f
     \|_{C^{\delta} (\mathbb{R}^d)} (s - v)^{(\delta - 2) H} (| (z^{(2)}
     (v))^{- 1} \hat{Y}_{s, u} | + (s - v)^{H - 1} (t - s)) . \]
  As $H > \frac{1}{2}$, by Lemma~\ref{lem:kernel_correlation}, we have
  \[ | \mathbb{E} [\tilde{B}_s \tilde{B}_{s, t}] | \lesssim (s - v)^{2 H - 1}
     (t - s) . \]
  Therefore,
  \begin{multline*}
   \| \hat{D}^i_{s, u, t} \|_{L_m (\mathbb{P})} \lesssim \| f
     \|_{C^{\delta}} \| | z^{(2)} (v) |^{\delta} \|_{L_{3 m} (\mathbb{P})} \\
     \times \{
     (s - v)^{(\delta + 1) H - 2} (t - s)^2 +  \| z^{(2)}(v)^{-1} z^{(3)} (v)
     \|_{L_{3 m} (\mathbb{P})} (s - v)^{\delta H - 1} (t - s)^2 \} .
  \end{multline*}

  Combining our estimates, we have
  \begin{multline}\label{eq:stochastic_integral_controlled_path_pr3}
   \| \mathbb{E} [\delta \hat{A}_{s, u, t} | \mathcal{F}_v] \|_{L_m
     (\mathbb{P})} \lesssim_T \| f \|_{C^{\delta}} \| | z^{(2)} (v) |^{\delta}
     \|_{L_{3 m} (\mathbb{P})} \\
     \times (1 +  \| z^{(2)}(v)^{-1} z^{(3)} (v) \|_{L_{3
     m} (\mathbb{P})}) (s - v)^{(\delta + 1) H - 2} (t - s)^2.
  \end{multline}
  Hence, combining \eqref{eq:stochastic_integral_controlled_path_pr1}, \eqref{eq:stochastic_integral_controlled_path_pr2} and \eqref{eq:stochastic_integral_controlled_path_pr3}
  \[ \| \delta A_{s, u, t} \|_{L_m (\mathbb{P})} \lesssim
    \| f
     \|_{C^{\delta}} \norm{\norm{Z}_{C^{\alpha}}^{\delta}}_{L_{2m}(\P)}  | t -
     s |^{H + \alpha \delta},
    \]
    and
  \begin{multline*}
   \| \mathbb{E} [\delta A_{s, u, t} | \mathcal{F}_v] \|_{L_m (\mathbb{P})}
     \lesssim_{\alpha, \delta, T} \| f \|_{C^{\delta}} \| | z^{(2)} (v) |^{\delta} \|_{L_{3 m}
     (\mathbb{P})} \\
     \times (1 +  \| z^{(2)}(v)^{-1} z^{(3)} (v) \|_{L_{3 m}
     (\mathbb{P})}) (s - v)^{(\delta + 1) H - 2} (t - s)^2 \\
     +\| f \|_{C^{\delta}} \norm{\norm{Z}_{\cD(\alpha,\beta)}^{\delta}}_{L_{2m}(\P)}  | t - v |^{\delta \beta}
      | t - s |^H.
  \end{multline*}
  To apply Lemma \ref{lem:stochastic-sewing-tradeoff}, we need
  \[ \delta \beta + H > 1, \quad \frac{1 - (1 + \delta) H}{2 - (1 + \delta)
     H} < \min \Big\{ \frac{\delta \beta + H - 1}{\delta \beta}, 2 (H + \alpha \delta) - 1 \Big\} , \]
  which, if $\alpha$ is sufficiently close to $H$, are fulfilled under \eqref{eq:delta_greater_than_three_guys}.
\end{proof}

\begin{proof}[Proof of Theorem~\ref{thm:regularization_by_noise_for_multiplicative_noise}]
  We first prove the pathwise uniqueness. We suppose that
  the assumption (b) mentioned in Theorem~\ref{thm:regularization_by_noise_for_multiplicative_noise} holds,
  and the other cases will be discussed later.
  Let $X^{(1)}$ and $X^{(2)}$ be $(\cF_t)$-adapted solutions.
  Our strategy is similar to Proposition~\ref{prop:uniqueness_young}, but here we must construct the integral \eqref{eq:integral_X} stochastically.
  For each $k \in \N$, we set
  \begin{equation*}
    \lambda_k \defby \inf_{x: \abs{x} \leq k} \inf_{y: \abs{y} =1} y \cdot \sigma(x) y,
  \end{equation*}
  we let $\sigma^{k} \in C^{1 + \delta}(\R^d; \cM_d)$ be such that
  $\sigma^k = \sigma$ in $\set{x \given \abs{x} \leq k}$ and
  \begin{equation*}
    \inf_{x \in \R^d} \inf_{y: \abs{y} = 1} y \cdot \sigma^k(x) y \geq \frac{\lambda_k}{2},
  \end{equation*}
  and we set
  \begin{equation*}
    X^{(i), k}_t \defby x + \int_0^t b(X^{(i)}_r) \dd r + \int_{0}^t \sigma^k(X^{(i)}_r) \dd B_r, \quad i=1,2.
  \end{equation*}
  If we write
  \begin{equation*}
    \Omega_k \defby \set{\omega \in \Omega \given \sup_{t \in [0, T]} \max \{\abs{X^{(1)}_t(\omega)}, \abs{X^{(2)}_t(\omega)}\} \leq k},
  \end{equation*}
  then in the event $\Omega_k$ we have $X^{(i)}_t = X^{(i), k}_t$, $t \in [0, T]$.

  Let $\{\sigma^{k, n}\}_{n=1}^{\infty}$ be a smooth approximation of $\sigma^k$.
  In general we can only guarantee the convergence in $C^{1+\delta'}(\R^d, \cM_d)$ for any $\delta'<\delta$, which is still sufficient to make the following argument work. To simplify the notation we assume that we can take $\delta'=\delta$.

  We have
  \begin{equation*}
    \int_{0}^t \sigma^k(X^{(i)}_r) \dd B_r = \lim_{n \to \infty} \int_{0}^t \sigma^{k, n}(X^{(i)}_r) \dd B_r
  \end{equation*}
  and in $\Omega_k$
  \begin{equation*}
     \int_{0}^t \{ \sigma^{k, n}(X^{(1)}_r) - \sigma^{k, n}(X^{(2)}_r)\} \dd B_r
    = \int_0^t \{X^{(1), k}_r - X^{(2), k}_r \} \dd V^{k, n}_r,
  \end{equation*}
  where
  \begin{equation*}
    V^{k, n}_t \defby \int_0^t \int_0^1 \nabla \sigma^{k, n}(\theta X^{(1), k}_r + (1-\theta) X^{(2), k}_r) \dd \theta \dd B_r.
  \end{equation*}
  For a fixed $\theta \in (0, 1)$, we set
  \begin{equation*}
    Z_t^{\theta, k} \defby \theta X_t^{(1), k} + (1 - \theta) X_t^{(2), k}.
  \end{equation*}
  By the a priori estimate \eqref{eq:a_priori_R}, for $\alpha \in (\frac{1}{2}, H)$, we have
  \begin{multline*}
    Z_t^{\theta, k} -  Z_s^{\theta, k}
    = \{\theta \sigma^k(X_s^{(1), k}) + (1 - \theta) \sigma^k(X_s^{(1), k})\} B_{s,t} \\
    +  \{\theta b(X_s^{(1), k}) + (1 - \theta) b (X_s^{(1), k})\}(t-s) + R_{s, t}
  \end{multline*}
  with
  \begin{equation*}
    \abs{R_{s, t}} \lesssim_{T, \alpha}
    (1 + \abs{x} + \norm{b}_{C^1_b} + \norm{\sigma}_{C^1_b} \norm{B}_{C^{\alpha}})^3 \abs{t-s}^{2 \alpha}.
  \end{equation*}
  Note that we have
  \begin{equation*}
    \inf_{y: \abs{y} = 1} y \cdot \{\theta \sigma^k(X_s^{(1), k}) + (1 - \theta) \sigma^k(X_s^{(1), k})\} y
    \geq \frac{\lambda_k}{2}
  \end{equation*}
  and hence
  \begin{equation*}
    \abs{\{\theta \sigma^k(X_s^{(1), k}) + (1 - \theta) \sigma^k(X_s^{(1), k})\}^{-1} } \lesssim \lambda_k^{-1}.
  \end{equation*}
  Therefore, we have $Z^{\theta, k} \in \cD_1(\alpha, 2 \alpha)$ with
  \begin{equation*}
    \norm{Z^{\theta, k}}_{\cD_1(\alpha, 2 \alpha)}
    \lesssim_{T, \alpha}
    (1 + \abs{x} + \norm{b}_{C^1_b} + \norm{\sigma}_{C^1_b} \norm{B}_{C^{\alpha}})^3
    + (1 + \lambda_k^{-1}) (\norm{b}_{L_{\infty}} + \norm{\sigma}_{L_{\infty}}).
  \end{equation*}
  Since
  \begin{equation*}
    \max \Big\{ \frac{1 - H}{2 \alpha}, \frac{2 - 3 H + H^2}{2 \alpha +
     H - H^2}, \frac{3 - \sqrt{3}}{2 H} - 1 \Big\} =  \frac{2 - 3 H + H^2}{2 \alpha +
     H - H^2}< \delta
  \end{equation*}
  if $\alpha$ is sufficiently close to $H$,
  by Proposition \ref{prop:stochastic_integral_controlled_path},
  \begin{equation*}
    \norm{ V^{k, n_1}_{s, t} - V^{k, n_2}_{s, t}}_{L_m(\P)}
    \lesssim_{T, \alpha, \delta, b, \sigma, k, m}  \norm{\sigma^{k, n_1} - \sigma^{k, n_2}}_{C^{1 + \delta}(\R^d, \cM_d)}
    \abs{t-s}^{H}.
  \end{equation*}
  By Kolmogorov's continuity theorem, the sequence $(V^{k, n})_{n \in \N}$ converges to some $V^k$ in $C^{\alpha}([0, T], \R^d)$.

  Therefore, we conclude that almost surely in $\Omega_k$, the path $z = X^{(1)} - X^{(2)}$ solves the linear Young equation
  \begin{equation*}
    z_t = \int_0^t z_r \dd U_r^k, \quad U_t^k \defby V_t^k + \int_0^t \int_0^1 \nabla b(\theta X^{(1),k}_r + (1-\theta) X^{(2), k}_r) \dd \theta \dd r,
  \end{equation*}
  and hence $X^{(1)} = X^{(2)}$. Since $\P(\Omega_k) \to 1$, we conclude $X^{(1)} = X^{(2)}$ almost surely.
  Thus, we completed the proof of the uniqueness under (b). The other cases can be handled similarly.
  Indeed, under (a) we have $X^{(i)} \in \cD_0(\alpha, 2 \alpha)$ and under (c) we have $X^{(i)} \in \cD_0(\alpha, 1)$.

  Now, it remains to prove the existence of a strong solution. However, in view of the Yamada-Watanabe theorem (Proposition~\ref{prop:yamada_watanabe}),
  it suffices to show the existence of a weak solution, which will be proved in Lemma~\ref{lem:weak_solution} based on a standard compactness argument.
\end{proof}
\begin{remark}\label{rem:optimal_regularity_of_sigma}
  We believe that our assumption in Theorem~\ref{thm:regularization_by_noise_for_multiplicative_noise} is not optimal.
  One possible approach to relax the assumption is to consider a higher order approximation in \eqref{eq:sigma_young_approximation}.
  Yet, we believe that this will not lead to an optimal assumption, as long as we apply Lemma~\ref{lem:stochastic-sewing-tradeoff}.
  Thus, finding an optimal regularity of $\sigma$ for the pathwise uniqueness and the strong existence remains an interesting open question
  that is likely to require a new idea.
\end{remark}

\appendix
\section{Proofs of technical results}\label{app:technical}
\subsubsection*{Proofs of Lemma~\ref{lem:refine_A_t_0_t_1} and Lemma~\ref{lem:A_and_A_dash}}
\begin{proof}[Proof of Lemma~\ref{lem:refine_A_t_0_t_1} without \eqref{eq:alpha_and_beta_technical}]
  Let us first recall our previous strategy under \eqref{eq:alpha_and_beta_technical}. We used Lemma~\ref{lem:dyadic_allocation} to write
  \begin{equation}\label{eq:A_t_0_t_N_dyadic_expansion}
    A_{t_0, t_N} - \sum_{i=1}^N A_{t_{i-1}, t_i} = \sum_{n \in \N_0} \sum_{i=0}^{2^n - 1}
    R^n_i.
  \end{equation}
  Then, we decomposed
  \begin{multline}\label{eq:R_n_i_decomposition_appendix}
    \sum_{i=0}^{2^n - 1} R^n_i
    = \sum_{l = 0}^{L - 1} \sum_{j=0}^{2^n/L } \Big( R^n_{Lj + l} -
    \expect[R^n_{L j + l} \vert \cF_{L (j-1) + l + 1}^n] \indic_{\{j \geq 1\}} \Big)  \\
    + \sum_{l = 0}^{L - 1} \sum_{j=1}^{2^n/L }
    \expect[R^n_{L j + l} \vert \cF_{L (j-1) + l + 1}^n],
  \end{multline}
  where $\cF^n_k \defby \cF_{t_0 + \frac{k}{2^n}(t_N - t_0)}$.
  We estimated the first term of \eqref{eq:R_n_i_decomposition_appendix} by the BDG inequality and \eqref{eq:sewing_one_half}:
  \begin{multline}\label{eq:R_BDG}
    \norm{\sum_{l = 0}^{L - 1} \sum_{j=0}^{2^n/L } \Big( R^n_{Lj + l} -
    \expect[R^n_{L j + l} \vert \cF_{L (j-1) + l + 1}^n] \indic_{\{j \geq 1\}} \Big)}_{L_m(\P)}
    \lesssim \kappa_{m, d} \Gamma_2 L^{\frac{1}{2}} 2^{-n(\beta_2 - \frac{1}{2})} \abs{t_N - t_0}^{\beta_2}.
  \end{multline}
  In the proof under \eqref{eq:alpha_and_beta_technical}, we estimated the second term of \eqref{eq:R_n_i_decomposition_appendix} by the
  triangle inequality and \eqref{eq:sewing_regularization}:
  \begin{equation}\label{eq:R_triangle}
    \norm{\sum_{l = 0}^{L - 1} \sum_{j=1}^{2^n/L }
    \expect[R^n_{L j + l} \vert \cF_{L (j-1) + l + 1}^n]}_{L_m(\P)}
    \lesssim_{\alpha} \Gamma_1 L^{-\alpha} 2^{-(\beta_1 - \alpha - 1) n} \abs{t_N - t_0}^{\beta_1 - \alpha}.
  \end{equation}
  Then, we chose $L$ so that both \eqref{eq:R_BDG} and \eqref{eq:R_triangle} are summable with respect to $n$, for which to be possible,
  we had to assume \eqref{eq:alpha_and_beta_technical}.

  In order to remove the assumption \eqref{eq:alpha_and_beta_technical}, let us think again why we did the decomposition \eqref{eq:R_n_i_decomposition_appendix}.
  This is because we do not want to apply the simplest estimate, namely the triangle inequality, since the condition \eqref{eq:sewing_regularization}
  implies that $(A_{s,t})_{[s,t] \in \pi}$ are not so correlated.
  This point of view teaches us that, to estimate
  \begin{equation*}
    \sum_{l = 0}^{L - 1} \sum_{j=1}^{2^n/L }
    \expect[R^n_{L j + l} \vert \cF_{L (j-1) + l + 1}^n],
  \end{equation*}
  we should not simply apply the triangle inequality. That is, we should again apply the decomposition as in \eqref{eq:R_n_i_decomposition_appendix}.

  To carry out our new strategy, set
  \begin{equation*}
    S^{(1), l}_j \defby R^n_{L j + l}, \quad \cG^{(1), l}_j \defby \cF_{ L(j-1) + l +1 }^n, \qquad j \in \N.
  \end{equation*}
  \tymchange{For this new strategy, we can set $L \defby \max\{2, \lceil M \rceil\}$. 
  In particular, $L$ does not depend on $n$.
  }
  We use the convention $\expect[X \vert \cG^{(1), l}_j] = 0$ for $j \leq 0$.
  Then,
  \begin{align}
  \sum_{l = 0}^{L - 1} \sum_{j=1}^{2^n/L }
    \expect[R^n_{L j + l} \vert \cF_{L (j-1) + l + 1}^n]
    &=
    \sum_{l=0}^{L-1}  \sum_{j=1}^{L^{-1} 2^n} \expect[S^{(1), l}_j \vert \cG^{(1), l}_j] \notag \\
    \label{eq:S_decomposition_second_level}
    &= \sum_{l_1=0}^{L-1} \sum_{l_2=0}^{L-1} \sum_{j=0}^{L^{-2} 2^n}
    \expect[S^{(1), l_1}_{jL + l_2} \vert \cG^{(1), l_1}_{jL + l_2 }].
  \end{align}
  By setting
  \begin{equation*}
    S^{(2), l_1, l_2}_{j} \defby S^{(1), l_1}_{jL + l_2},
    \quad \cG^{(2), l_1, l_2}_j \defby \cG^{(1), l_1}_{(j-1)L + l_2},
  \end{equation*}
  the quantity \eqref{eq:S_decomposition_second_level} equals to
  \begin{multline*}
    \sum_{l_1=0}^{L} \sum_{l_2=0}^L \sum_{j=0}^{L^{-2} 2^n}
    \big(\expect[S^{(2), l_1, l_2}_j \vert \cG^{(2), l_1, l_2}_{j+1}]
    - \expect[S^{(2), l_1, l_2}_j \vert \cG^{(2), l_1, l_2}_{j}] \big) \\
    + \sum_{l_1=0}^{L} \sum_{l_2=0}^L \sum_{j=0}^{L^{-2} 2^n}
    \expect[S^{(2), l_1, l_2}_j \vert \cG^{(2), l_1, l_2}_{j }]
  \end{multline*}
  The $L_m(\P)$-norm of the first term can be estimated by the BDG inequality: it is bounded by
  \begin{equation}\label{eq:martingale_difference_second_level}
    2 \kappa_{m, d} \sum_{l_1, l_2 \leq L} \Big( \sum_{j \leq L^{-2} 2^n}
    \norm{\expect[S^{(2), l_1, l_2}_j \vert \cG^{(2), l_1, l_2}_{j+1}]}_{L_m(\P)}^2
    \Big)^{\frac{1}{2}}.
  \end{equation}
  By \eqref{eq:sewing_regularization}, we have
  \begin{equation*}
    \norm{\expect[S^{(2), l_1, l_2}_j \vert \cG^{(2), l_1, l_2 }_{j+1}]}_{L_m(\P)}
    \leq \Gamma_1 (L 2^{-n} \abs{t_N - t_0})^{-\alpha} (2^{-n} \abs{t_N - t_0})^{\beta_1}.
  \end{equation*}
  Therefore, the quantity \eqref{eq:martingale_difference_second_level} is bounded by
  \begin{equation*}
    2 \kappa_{m, d} \Gamma_1 L^{1 - \alpha} 2^{- n(\beta_1 - \alpha - \frac{1}{2})} \abs{t_N - t_0}^{\beta_1 - \alpha}.
  \end{equation*}

  As the reader may realize, we will repeat the same argument for
  \begin{equation*}
    \sum_{l_1=0}^{L} \sum_{l_2=0}^L \sum_{j=1}^{L^{-2} 2^n}
    \expect[S^{(2), l_1, l_2}_j \vert \cG^{(2), l_1, l_2}_{j - 1}]
  \end{equation*}
  and continue. To state more precisely,
  set inductively,
  \begin{equation}\label{eq:S_and_G}
    S^{(k), l_1, \ldots, l_k}_j \defby S^{(k-1), l_1, \ldots, l_{k-1}}_{Lj + l_k}, \quad
    \cG^{(k), l_1, \ldots, l_k}_j \defby \cG^{(k - 1), l_1, \ldots, l_{k-1}}_{L(j-1) + l_k}, \quad j \in [1, L^{-k} 2^n] \cap \N.
  \end{equation}
  We claim that, if $L^k  \leq 2^n$, we have
  \begin{multline*}
    \norm{\sum_{i=0}^{2^n - 1} R^n_i}_{L_m(\P)} \leq 2 \kappa_{m, d} \Gamma_2 L^{\frac{1}{2}}
    2^{-n(\beta_2 - \frac{1}{2})} \abs{t_N - t_0}^{\beta_2} \\
    +2 \kappa_{m, d} \Gamma_1 \Big(\sum_{j=1}^{k-1} L^{\frac{j}{2} - (j-1) \alpha} \Big)
    L^{\frac{1}{2}} 2^{-n(\beta_1 - \alpha - \frac{1}{2})} \abs{t_N - t_0}^{\beta_1 - \alpha} \\
    + \norm{\sum_{l_1, \ldots, l_k \leq L} \sum_{j \leq L^{-k} 2^{n}} \expect[S^{(k), l_1, \ldots, l_k}_j
    \vert \cG^{(k), l_1, \ldots,  l_k}_{j }]}_{L_m(\P)}.
  \end{multline*}
  The proof of the claim is based on induction. The case $k=1$ and $k=2$ is obtained.
  Suppose that the claim is correct for $k \geq 2$, and consider the case $k+1$.
  Again, decompose
  \begin{multline*}
    \sum_{l_1, \ldots, l_k \leq L} \sum_{j \leq L^{-k} 2^{n}} \expect[S^{(k), l_1, \ldots, l_k}_j
    \vert \cG^{(k), l_1, \ldots,  l_k}_{j }] \\
    =
    \sum_{l_1, \ldots, l_k, l_{k+1} \leq L} \sum_{j \leq L^{-(k+1)} 2^n}
    \big(\expect[S^{(k+1), l_1, \ldots, l_k}_j \vert \cG^{(k+1), l_1, \ldots, l_k, l_{k+1} }_{j+1}] \\
    \hspace{5cm}- \expect[S^{(k+1), l_1, \ldots, l_k}_j \vert \cG^{(k+1), l_1, \ldots,  l_k, l_{k+1}}_{j}] \big) \\
    + \sum_{l_1, \ldots, l_k, l_{k+1} \leq L} \sum_{j \leq L^{-(k+1)} 2^n}
     \expect[S^{(k+1), l_1, \ldots, l_k}_j \vert \cG^{(k+1), l_1, \ldots,  l_k, l_{k+1}}_{j}].
  \end{multline*}
  To prove the claim, it suffices to estimate the first sum in the right-hand side. By the BDG inequality, its
  $L_m(\P)$-norm is bounded by
  \begin{equation}\label{eq:martingale_difference_kth_level}
    2 \kappa_{m, d} \sum_{l_1, \ldots, l_k, l_{k+1} \leq L} \Big(\sum_{j \leq L^{-(k+1)} 2^n}
    \norm{\expect[S^{(k+1), l_1, \ldots, l_k, l_{k+1}}_j \vert \cG^{(k+1), l_1, \ldots, l_k, l_{k+1} }_{j+1}]}_{L_m(\P)}^2
     \Big)^{1/2}.
  \end{equation}
  By \eqref{eq:sewing_regularization},
  \begin{equation}\label{eq:sewing_martingale_difference_kth_level}
    \norm{\expect[S^{(k+1), l_1, \ldots, l_k, l_{k+1}}_j \vert \cG^{(k+1), l_1, \ldots, l_k, l_{k+1}}_{j+1}]}_{L_m(\P)}
    \leq \Gamma_1 (L^k 2^{-n} \abs{t_N - t_0})^{-\alpha} (2^{-n} \abs{t_N - t_0})^{\beta_1}.
  \end{equation}
  Therefore, the quantity \eqref{eq:martingale_difference_kth_level} is bounded by
  \begin{equation*}
    2 \kappa_{m, d} \Gamma_1 L^{\frac{1}{2}} L^{(\frac{1}{2} - \alpha)k}
    2^{-n(\beta_1 - \alpha - \frac{1}{2})} \abs{t_N - t_0}^{(\beta_1 - \alpha)}
  \end{equation*}
  and the claim follows.

  Now let us estimate
  \begin{equation}\label{eq:kth_level_conditional_expectation}
    \norm{\sum_{l_1, \ldots, l_k \leq L} \sum_{j \leq L^{-k} 2^{n}} \expect[S^{(k), l_1, \ldots, l_k}_j
    \vert \cG^{(k), l_1, \ldots,  l_k}_{j }]}_{L_m(\P)}
  \end{equation}
  by the triangle inequality:
  \begin{multline*}
    \norm{\sum_{l_1, \ldots, l_k \leq L} \sum_{j \leq L^{-k} 2^{n}} \expect[S^{(k), l_1, \ldots, l_k}_j
    \vert \cG^{(k), l_1, \ldots,  l_k}_{j }]}_{L_m(\P)} \\
    \leq \sum_{l_1, \ldots, l_k \leq L} \sum_{j \leq L^{-k} 2^{n}} \norm{\expect[S^{(k), l_1, \ldots, l_k}_j
    \vert \cG^{(k), l_1, \ldots,  l_k}_{j }]}_{L_m(\P)}.
  \end{multline*}
  By \eqref{eq:sewing_regularization} (or essentially the estimate \eqref{eq:sewing_martingale_difference_kth_level}),
  \begin{equation*}
    \norm{\expect[S^{(k), l_1, \ldots, l_k}_j
    \vert \cG^{(k), l_1, \ldots,  l_k}_{j }]}_{L_m(\P)}
    \leq \Gamma_1 (L^k 2^{-n} \abs{t_N - t_0})^{-\alpha} (2^{-n} \abs{t_N - t_0})^{\beta_1}
  \end{equation*}
  and hence the quantity \eqref{eq:kth_level_conditional_expectation} is bounded by
  \begin{equation*}
    \Gamma_1 L^{ - \alpha k}
    2^{-n(\beta_1 - \alpha - 1)} \abs{t_N - t_0}^{\beta_1 - \alpha}
  \end{equation*}

  In conclusion,   we obtained for $L^k \leq 2^n$,
  \begin{multline}\label{eq:estimate_of_R_n_i_improved}
    \norm{\sum_{i=0}^{2^n - 1} R^n_i}_{L_m(\P)} \lesssim
    \kappa_{m, d} \Gamma_2 L^{\frac{1}{2}} 2^{-n(\beta_2 - \frac{1}{2})} \abs{t_N - t_0}^{\beta_2} \\
    +  \kappa_{m, d} \Gamma_1
     f_k 2^{-n(\beta_1 - \alpha - \frac{1}{2})} \abs{t_N - t_0}^{\beta_1 - \alpha}
    + \Gamma_1 L^{-\alpha k}
    2^{-n(\beta_1 - \alpha - 1)} \abs{t_N - t_0}^{\beta_1 - \alpha },
  \end{multline}
  where
  \begin{equation}\label{eq:def_of_f_k_L}
    f_k =
    \begin{cases}
      L^{\frac{k}{2} - \alpha(k-1)} \indic_{\{k \geq 2\}}, &\text{if } \alpha < \frac{1}{2}, \\
      (k-1) L^{\frac{1}{2}}, &\text{if } \alpha \geq \frac{1}{2}.
    \end{cases}
  \end{equation}
  \tymchange{
    We recall that $L = \max\{2, \lceil M \rceil\}$. To respect $L^k \leq 2^n$, we set 
    $k \defby \lfloor \frac{n \log 2}{\log L} \rfloor$. We then have 
    \begin{align*}
      f_k 2^{-n(\beta_1 - \alpha - \frac{1}{2})} \lesssim_{M} 
      \begin{cases}
      2^{-n (\beta_1 - 1)}, &\text{if } \alpha < \frac{1}{2}, \\
      n 2^{-n(\beta_1 - \alpha - \frac{1}{2})}, &\text{if } \alpha \geq \frac{1}{2},
      \end{cases}
    \end{align*}
    and 
    \begin{align*}
      L^{-\alpha k} 2^{-n(\beta_1 - \alpha - 1)} \lesssim_{\alpha, M} 2^{-n (\beta_1 - 1)}.
    \end{align*}
  }
  Therefore, we note that the right-hand side of \eqref{eq:estimate_of_R_n_i_improved} is summable with respect to $n$ and
  \begin{equation*}
    \norm{A_{t_0, t_N} - \sum_{i=1}^N A_{t_{i-1}, t_i} }_{L_m(\P)}
    \lesssim_{\alpha, \beta_1, \beta_2, M}
    \kappa_{m, d} \Gamma_2 \abs{t_N - t_0}^{\beta_2} +  \kappa_{m, d} \Gamma_1 \abs{t_N - t_0}^{\beta_1 - \alpha_1}. \qedhere
  \end{equation*}
\end{proof}

\begin{proof}[Proof of Lemma~\ref{lem:A_and_A_dash}]
  The proof is similar to Lemma~\ref{lem:refine_A_t_0_t_1}.
  Write
  \begin{equation*}
    \pi' =: \{0 = t_0 < t_1 < \cdots < t_{N-1} < t_N = T\}
  \end{equation*}
   and
  \begin{equation*}
    \set{[s, t] \in \pi \given t_j \leq s < t \leq t_{j+1}}
    =: \{t_j = t^j_0  < t^j_1 < \cdots < t^j_{N_j - 1} < t^j_{N_j} = t_{j+1}\}.
  \end{equation*}
  By \eqref{eq:pi_almost_uniform}, we have $N \leq 3 \abs{\pi'}^{-1} T$.
  We fix a parameter $L$, which will be chosen later,  and set
  \begin{equation*}
    Z_{j}^{(1), l} \defby A_{t_{jL + l}, t_{jL + l +1}} - \sum_{k=1}^{N_{jL + l}} A_{t^{jL + l}_{k-1}, t^{jL + l}_{k}},
    \quad \cH_{j}^{(1), l} \defby \cF_{t_{(j-1)L + l + 1}}.
  \end{equation*}
  Inductively, we set
  \begin{equation*}
    Z^{(k), l_1, \ldots, l_k}_j \defby Z^{(k-1), l_1, \ldots, l_{k-1}}_{j L + l_k},
    \quad
    \cH^{(k), l_1, \ldots, l_k}_j \defby \cH^{(k-1), l_1, \ldots, l_{k-1}}_{(j-1) L + l_k}.
  \end{equation*}
  As in Lemma~\ref{lem:refine_A_t_0_t_1}, for each $k \in \N$, we consider the decomposition
  \begin{equation*}
  A^{\pi'}_T - A^{\pi}_T = A + B
  \end{equation*}
  where
  \begin{align*}
    A &\defby \sum_{p=1}^k \sum_{l_1, \ldots, l_p \leq L} \sum_{j \leq N L^{-p}}
    \Big\{ \expect[Z^{(p), l_1, \ldots, l_p}_j \vert \cH_{j+1}^{(p), l_1, \ldots, l_p}] -  \expect[Z^{(p), l_1, \ldots, l_p}_j \vert \cH_{j}^{(p), l_1, \ldots, l_p}]
    \Big\} \\
    B &\defby \sum_{l_1, \ldots, l_k \leq L} \sum_{j \leq N L^{-k}}
      \expect[Z^{(k), l_1, \ldots, l_k}_j \vert \cH_{j}^{(k), l_1, \ldots, l_k}].
  \end{align*}
  For this decomposition, we must have $L^k \leq N$.
  By the BDG inequality and the Cauchy-Schwarz inequality,
  \begin{align*}
    \norm{A}_{L_m(\P)} \lesssim \kappa_{m, d}
    \sum_{p=1}^k
    L^{\frac{p}{2}}
    \Big(
    \sum_{l_1, \ldots, l_p \leq L} \sum_{j \leq N L^{-p}}
    \norm{\expect[Z^{(p), l_1, \ldots, l_p}_j \vert \cH_{j+1}^{(p), l_1, \ldots, l_p}]
    }_{L_m(\P)}^2
    \Big)^{\frac{1}{2}}.
  \end{align*}
  By Lemma~\ref{lem:refine_A_t_0_t_1},
  \begin{equation*}
    \norm{Z_{j}^{(1), l}}_{L_m(\P)} \lesssim_{\alpha, \beta_1, \beta_2} \Gamma_1 \abs{t_{jL+l+1} - t_{jL + l}}^{\beta_1 - \alpha}
    + \kappa_{m, d} \Gamma_2 \abs{t_{jL+l+1} - t_{jL + l}}^{\beta_2}.
  \end{equation*}
  For $p \geq 2$, by Lemma~\ref{lem:refine_A_t_0_t_1_conditioning} and \eqref{eq:pi_almost_uniform},
  \begin{equation*}
    \norm{ \expect[Z^{(p), l_1, \ldots, l_p}_j \vert \cH_{j+1}^{(p), l_1, \ldots, l_p}]}_{L_m(\P)}
    \lesssim_{\beta_1} \Gamma_1 L^{-(p-1) \alpha} \abs{\pi'}^{-\alpha} \abs{\pi'}^{\beta_1}.
  \end{equation*}
  Therefore, we obtain
  \begin{equation}\label{eq:estimate_of_A}
    \norm{A}_{L_m(\P)} \lesssim_{\alpha, \beta_1, \beta_2, m, d, T}  L^{\frac{1}{2}}(\Gamma_1 \abs{\pi'}^{\beta_1 - \alpha - \frac{1}{2}} + \Gamma_2 \abs{\pi'}^{\beta_2 - \frac{1}{2}})
    + \Gamma_1 f_k \abs{\pi'}^{\beta_1 - \alpha - \frac{1}{2}},
  \end{equation}
  where $f_k$ is defined by \eqref{eq:def_of_f_k_L}.

  We move to estimate $B$. By Lemma~\ref{lem:refine_A_t_0_t_1_conditioning} and \eqref{eq:pi_almost_uniform},
  \begin{equation*}
    \norm{\expect[Z_{j}^{(k), l_1, \ldots, l_k} \vert \cH_{j}^{(k), l_1, \ldots, l_k}]}_{L_m(\P)}
    \lesssim_{\beta_1} \Gamma_1 L^{-\alpha k} \abs{\pi'}^{\beta_1 - \alpha}.
  \end{equation*}
  Therefore,
  \begin{equation}\label{eq:estimate_of_B}
    \norm{B}_{L_m(\P)} \lesssim_{\beta_1, T} \Gamma_1 L^{-\alpha k}
    \abs{\pi'}^{\beta_1 - \alpha - 1}.
  \end{equation}
  Combining \eqref{eq:estimate_of_A} and \eqref{eq:estimate_of_B}, we obtain
  \begin{multline*}
    \norm{A^{\pi'}_T - A^{\pi}_T}_{L_m(\P)} \\
    \lesssim_{\alpha, \beta_1, \beta_2, m, d, T}
    L^{\frac{1}{2}}(\Gamma_1 \abs{\pi'}^{\beta_1 - \alpha - \frac{1}{2}} + \Gamma_2 \abs{\pi'}^{\beta_2 - \frac{1}{2}})
    + \Gamma_1 f_{k} \abs{\pi'}^{\beta_1 - \alpha - \frac{1}{2}}
    + \Gamma_1 L^{-\alpha k}
    \abs{\pi'}^{\beta_1 - \alpha - 1}.
  \end{multline*}
  \tymchange{
  As in the proof of Lemma~\ref{lem:refine_A_t_0_t_1}, we set $L \defby \max\{2, \lceil M \rceil\}$ and $k \defby \lfloor \frac{\log N}{\log L} \rfloor$.
  We then obtain the claimed estimate.
  }
\end{proof}
\subsubsection*{Proof of Corollary~\ref{cor:singular_generalized_stochastic_sewing_lemma}}
The argument is similar to that of Theorem~\ref{thm:generalized_stochastic_sewing}. Therefore, we only prove an
analogue of Lemma~\ref{lem:refine_A_t_0_t_1}.

\begin{proof}[Analogue of Lemma~\ref{lem:refine_A_t_0_t_1}]
  Given a partition
  \begin{equation*}
    0 \leq t_0 < t_1 < \cdots < t_{N-1} < t_N \leq T,
  \end{equation*}
  we have
  \begin{equation}\label{eq:A_t_0_t_N_refinement_singular}
    \norm{A_{t_0, t_N} - \sum_{i=1}^N A_{t_{i-1}, t_i}}_{L_m(\P)}
    \lesssim_{\alpha, \beta_1, \beta_2, M}
    \kappa_{m, d} \Gamma_1 t_0^{-\gamma_1} \abs{t_N - t_0}^{\beta_1 - \alpha}
    + \kappa_{m, d} \Gamma_2 t_0^{-\gamma_2} \abs{t_N - t_0}^{\beta_2},
  \end{equation}
  \begin{multline}\label{eq:A_t_0_t_N_refinement_singular_continuity}
    \norm{A_{t_0, t_N} - \sum_{i=1}^N A_{t_{i-1}, t_i}}_{L_m(\P)} \\
    \lesssim_{\alpha, \beta_1, \beta_2, \beta_3, \gamma_1, \gamma_2, M}
    \kappa_{m,d}
    (\Gamma_1 \abs{t_N - t_0}^{\beta_1 - \alpha_1 - \gamma_1}
    + \Gamma_2 \abs{t_N - t_0}^{\beta_2 - \gamma_2} + \Gamma_3 \abs{t_N - t_0}^{\beta_3}),
  \end{multline}
  where $t_0 > 0$ is assumed for \eqref{eq:A_t_0_t_N_refinement_singular}.
  In fact, the proof of \eqref{eq:A_t_0_t_N_refinement_singular} is the same as that of Lemma~\ref{lem:refine_A_t_0_t_1},
  since we can simply replace $\Gamma_1$ and $\Gamma_2$ by $t_0^{-\gamma_1} \Gamma_1$ and $t_0^{-\gamma_2} \Gamma_2$.
  Therefore, we focus on proving \eqref{eq:A_t_0_t_N_refinement_singular_continuity}.

  Yet, the proof of \eqref{eq:A_t_0_t_N_refinement_singular_continuity} is similar to that of Lemma~\ref{lem:refine_A_t_0_t_1}.
  Recalling the notation therein, namely \eqref{eq:A_t_0_t_N_dyadic_expansion} and \eqref{eq:S_and_G}, we have
  \begin{equation*}
    A_{t_0, t_N} - \sum_{i=1}^N A_{t_{i-1}, t_i} = \sum_{n=0}^{\infty} \sum_{i=0}^{2^n} R^n_i,
  \end{equation*}
  \begin{multline}\label{eq:R_n_i_expansion}
     \sum_{i=0}^{2^n} R^n_i
    = \sum_{p=1}^k \sum_{l_1, \ldots, l_p \leq L} \sum_{j \leq 2^n L^{-p}}
    \Big\{ \expect[S_j^{(p), l_1, \ldots, l_p} \vert \cG_{j+1}^{(p), l_1, \ldots, l_p}] -
    \expect[S_j^{(p), l_1, \ldots, l_p} \vert \cG_{j}^{(p), l_1, \ldots, l_p}] \Big\}  \\
    + \sum_{l_1, \ldots, l_k \leq L} \sum_{j \leq 2^n L^{-k}} \expect[S_j^{(k), l_1, \ldots, l_k} \vert \cG_{j}^{(k), l_1, \ldots, l_k}].
  \end{multline}
  We fix a large $n$.
  To estimate the first term of \eqref{eq:R_n_i_expansion}, we apply the BDG inequality to obtain
  \begin{multline*}
    \norm{\sum_{j \leq 2^n L^{-p}}
    \Big\{ \expect[S_j^{(p), l_1, \ldots, l_p} \vert \cG_{j+1}^{(p), l_1, \ldots, l_p}] -
    \expect[S_j^{(p), l_1, \ldots, l_p} \vert \cG_{j}^{(p), l_1, \ldots, l_p}] \Big\}}_{L_m(\P)} \\
    \lesssim \kappa_{m, d} \Big( \sum_{j \leq 2^n L^{-p}} \norm{\expect[S_j^{(p), l_1, \ldots, l_p} \vert \cG_{j+1}^{(p), l_1, \ldots, l_p}]}_{L_m(\P)}^2 \Big)^{\frac{1}{2}}
  \end{multline*}

  For $p=1$,
   since $S^{(1), l_1}_j = R^n_{j L + l_1}$, by the Cauchy-Schwarz inequality,
  \begin{equation*}
    \sum_{l_1 \leq L}
    \Big( \sum_{j \leq 2^n L^{-1}} \norm{S_j^{(1), l_1}}_{L_m(\P)}^2 \Big)^{\frac{1}{2}}
    \leq L^{\frac{1}{2}} \Big( \sum_{i=0}^{2^n} \norm{R^n_i}_{L_m(\P)}^2 \Big)^{\frac{1}{2}}.
  \end{equation*}
  For $i \geq 1$ by \eqref{eq:sewing_one_half_singular} we have
  \begin{equation*}
    \norm{R^n_i}_{L_m(\P)} \leq 2 \Gamma_2 (t_0 + 2^{-n} i \abs{t_N - t_0})^{-\gamma_2}
    (2^{-n} \abs{t_N - t_0})^{\beta_2}
  \end{equation*}
  and by \eqref{eq:sewing_continuity_at_zero}
  \begin{equation*}
    \norm{R^n_0}_{L_m(\P)} \leq 2 \Gamma_3 (2^{-n} \abs{t_N - t_0})^{\beta_3}.
  \end{equation*}
  Therefore,
  \begin{equation*}
    \Big(\sum_{i=0}^{2^n} \norm{R^n_i}_{L_m(\P)}^2 \Big)^{\frac{1}{2}}
    \lesssim \Gamma_3 2^{-n \beta_3} \abs{t_N - t_0}^{\beta_3}
    + \Gamma_2 2^{-n \beta_2} \abs{t_N - t_0}^{\beta_2} \Big( \sum_{i=1}^{2^n}
    (t_0 + 2^{-n} i \abs{t_N - t_0})^{-2 \gamma_2} \Big)^{\frac{1}{2}}.
  \end{equation*}
  We observe
  \begin{equation}\label{eq:negative_power_integral}
    \sum_{i=1}^{2^n}
    (t_0 + 2^{-n} i \abs{t_N - t_0})^{-2 \gamma_2}
    \leq 2^n \abs{t_N - t_0}^{-1} \int_0^{\abs{t_N - t_0}} s^{-2 \gamma_2} \dd s
    = \frac{2^n \abs{t_N - t_0}^{- 2 \gamma_2}}{1 - 2 \gamma_2},
  \end{equation}
  where the condition $\gamma_2 < \frac{1}{2}$ is used.
  We conclude
  \begin{equation*}
    \sum_{l_1 \leq L}
    \Big( \sum_{j \leq 2^n L^{-1}} \norm{S_j^{(1), l_1}}_{L_m(\P)}^2 \Big)^{\frac{1}{2}}
    \lesssim_{\gamma_2}
    L^{\frac{1}{2}}
    (\Gamma_3 2^{-n \beta_3} \abs{t_N - t_0}^{\beta_3} +
    \Gamma_2 2^{-n (\beta_2 - \frac{1}{2})} \abs{t_N - t_0}^{\beta_2 - \gamma_2}
    ).
  \end{equation*}

  For $2 \leq p \leq k$, the argument is similar but now we use \eqref{eq:sewing_regularization_singular}.
  By the Cauchy-Schwarz inequality,
  \begin{multline*}
    \sum_{l_1, \ldots, l_p \leq L}
    \Big( \sum_{j \leq 2^n L^{-p}} \norm{\expect[S_j^{(p), l_1, \ldots, l_p} \vert \cG_{j+1}^{(p), l_1, \ldots, l_p}]}_{L_m(\P)}^2 \Big)^{\frac{1}{2}} \\
    \leq L^{\frac{p}{2}}
    \Big(
      \sum_{l_1, \ldots, l_p \leq L}
      \sum_{j \leq 2^n L^{-p}} \norm{\expect[S_j^{(p), l_1, \ldots, l_p} \vert \cG_{j+1}^{(p), l_1, \ldots, l_p}]}_{L_m(\P)}^2
    \Big)^{\frac{1}{2}}.
  \end{multline*}
  We note that for each index $l_1, \ldots, l_p$ and $j$ in the sum there exists a unique
  $i = i(l_1, \ldots, l_p; j)$ such that
  \begin{equation*}
    S_j^{(p), l_1, \ldots, l_p} = R^n_i.
  \end{equation*}
  As $p \geq 2$, we know $i \geq L$.
  By \eqref{eq:sewing_regularization_singular} (as in the estimate \eqref{eq:sewing_martingale_difference_kth_level}),
  \begin{multline*}
    \norm{\expect[S_j^{(p), l_1, \ldots, l_p} \vert \cG_{j+1}^{(p), l_1, \ldots, l_p}]}_{L_m(\P)} \\
    \leq 2 \Gamma_1 (L^{p-1} 2^{-n} \abs{t_N - t_0})^{-\alpha}
    (t_0 + 2^{-n} i(l_1, \ldots, l_p;j) \abs{t_N - t_0})^{- \gamma_1}
    (2^{-n } \abs{t_N - t_0})^{\beta_1}.
  \end{multline*}
  Therefore,
  \begin{align*}
    \MoveEqLeft[3]
    \Big(
      \sum_{l_1, \ldots, l_p \leq L}
      \sum_{j \leq 2^n L^{-p}} \norm{\expect[S_j^{(p), l_1, \ldots, l_p} \vert \cG_{j+1}^{(p), l_1, \ldots, l_p}]}_{L_m(\P)}^2
    \Big)^{\frac{1}{2}} \\
    &\lesssim
    \Gamma_1 (L^{p-1} 2^{-n} \abs{t_N - t_0})^{-\alpha}
    (2^{-n } \abs{t_N - t_0})^{\beta_1}
    \Big(
      \sum_{i=1}^{2^n}
      (t_0 + 2^{-n} i \abs{t_N - t_0})^{- 2\gamma_1}
    \Big)^{\frac{1}{2}} \\
    &\lesssim_{\gamma_1}
    \Gamma_1 L^{-\alpha(p-1)} 2^{- n (\beta_1 - \alpha - \frac{1}{2})}
    \abs{t_N - t_0}^{\beta_1 - \alpha - \gamma_1 },
  \end{align*}
  where to obtain the second inequality we applied the estimate \eqref{eq:negative_power_integral}.

  Now we consider the estimate of the second term of \eqref{eq:R_n_i_expansion}. By the triangle inequality,
  \begin{multline*}
    \norm{\sum_{l_1, \ldots, l_k \leq L} \sum_{j \leq 2^n L^{-k}}
    \expect[S_j^{(k), l_1, \ldots, l_k} \vert \cG_{j}^{(k), l_1, \ldots, l_k}]}_{L_m(\P)} \\
    \leq
    \sum_{l_1, \ldots, l_k \leq L} \sum_{j \leq 2^n L^{-k}}
    \norm{\expect[S_j^{(k), l_1, \ldots, l_k} \vert \cG_{j}^{(k), l_1, \ldots, l_k}]}_{L_m(\P)}.
  \end{multline*}
  But the estimate of the right hand side was just discussed. In fact, we have
  \begin{equation*}
    \sum_{l_1, \ldots, l_k \leq L} \sum_{j \leq 2^n L^{-k}}
    \norm{\expect[S_j^{(k), l_1, \ldots, l_k} \vert \cG_{j}^{(k), l_1, \ldots, l_k}]}_{L_m(\P)}
    \lesssim_{\gamma_1}
    \Gamma_1 L^{-\alpha k} 2^{- n (\beta_1 - \alpha - 1)}
    \abs{t_N - t_0}^{\beta_1 - \alpha - \gamma_1 }.
  \end{equation*}

  Hence, we obtain the estimate
  \begin{multline*}
    \norm{\sum_{i=0}^{2^n} R^n_i}_{L_m(\P)}
    \lesssim_{\gamma_1, \gamma_2}
      \kappa_{m, d} L^{\frac{1}{2}}
    (\Gamma_3 2^{-n \beta_3} \abs{t_N - t_0}^{\beta_3} +
    \Gamma_2 2^{-n (\beta_2 - \frac{1}{2})} \abs{t_N - t_0}^{\beta_2 - \gamma_2}
    ) \\
    + \kappa_{m, d} \Gamma_1 \sum_{p=2}^k  L^{\frac{p}{2} -\alpha(p-1)} 2^{- n (\beta_1 - \alpha - \frac{1}{2})}
    \abs{t_N - t_0}^{\beta_1 - \alpha - \gamma_1 } \\
    + \Gamma_1 L^{-\alpha k} 2^{- n (\beta_1 - \alpha - 1)}
    \abs{t_N - t_0}^{\beta_1 - \alpha - \gamma_1 }.
  \end{multline*}
  By choosing $L$ and $k$ exactly as in the proof of Lemma~\ref{lem:refine_A_t_0_t_1},
  we conclude that there exists an $\epsilon = \epsilon(\alpha, \beta_1, \beta_2, \beta_3) > 0$
  such that for all large $n$
  \begin{multline*}
    \norm{\sum_{i=0}^{2^n} R^n_i}_{L_m(\P)} \\
    \lesssim_{\alpha, \beta_1, \beta_2, \beta_3, \gamma_1, \gamma_2}
    \kappa_{m,d} 2^{-n \epsilon}
    (\Gamma_1 \abs{t_N - t_0}^{\beta_1 - \alpha_1 - \gamma_1}
    + \Gamma_2 \abs{t_N - t_0}^{\beta_2 - \gamma_2} + \Gamma_3 \abs{t_N - t_0}^{\beta_3}),
  \end{multline*}
  from which we obtain \eqref{eq:A_t_0_t_N_refinement_singular_continuity}.
\end{proof}
\subsubsection*{Proof of Lemma~\ref{lem:kernel_correlation}}
  Let $d = 1$.
  For $u > v$, we set
  \begin{equation*}
    B^{(1)}_u \defby \int_{-\infty}^v K(u, r) \dd W_r, \quad
    B^{(2)}_u \defby \int_{v}^u K(u, r) \dd W_r
  \end{equation*}
  so that $B_u - B(0) = B^{(1)}_u + B^{(2)}_u$ and $B^{(1)}_{\cdot}$ and $B^{(2)}_{\cdot}$ are independent.
  Then, we have
  \begin{equation*}
    \expect[B^{(2)}_s B^{(2)}_t] = \int_v^s K(s, r) K(t, r) \dd r.
  \end{equation*}
  and by \eqref{eq:correlation_fbm}, we have
  \begin{equation*}
    \frac{c_H}{2}(s^{2H} + t^{2H} - \abs{t-s}^{2H})
    = \expect[B^{(1)}_s B^{(1)}_t] + \expect[B^{(2)}_s B^{(2)}_t],
  \end{equation*}
  and thus, we will estimate $\expect[B^{(1)}_s B^{(1)}_t]$.
  We have
  \begin{multline}\label{eq:correlation_B_1}
    \expect[B^{(1)}_s B^{(1)}_t]
    = \int_0^{\infty} \big[(s+r)^{H-1/2} - r^{H-1/2} \big] \big[(t+r)^{H-1/2} - r^{H-1/2} \big] \dd r \\
    + \int_0^v (s-r)^{H-1/2} (t-r)^{H-1/2} \dd r.
  \end{multline}
  By \cite[Theorem~33]{picard11}, the first term of \eqref{eq:correlation_B_1} equals to
  \begin{equation}\label{eq:correlation_B_1_first_term}
    (c_H - (2H)^{-1}) s^{2H} + \int_0^{\infty} \big[(s+r)^{H-1/2} - r^{H-1/2} \big]
    \big[(t+r)^{H-1/2} - (s+r)^{H-1/2} \big] \dd r.
  \end{equation}
  Since
  \begin{equation*}
    (t+r)^{H-1/2} - (s+r)^{H-1/2}
    = (H - 1/2) (s+r)^{H-3/2} (t-s) + O((s+r)^{H-5/2}(t-s)^2),
  \end{equation*}
  the second term of \eqref{eq:correlation_B_1_first_term} equals to
  \begin{equation*}
    s^{2H-1} (t-s) (H - 1/2) \int_0^{\infty} \big[(1+r)^{H-1/2} - r^{H-1/2} \big] (1+r)^{H-3/2} \dd r
    + O(s^{2H - 2} (t-s)^2).
  \end{equation*}
  By \cite[Theorem~33]{picard11},
  \begin{equation*}
    (H - 1/2) \int_0^{\infty} \big[(1+r)^{H-1/2} - r^{H-1/2} \big] (1+r)^{H-3/2} \dd r
    = -\frac{1}{2} + H c_H.
  \end{equation*}
  Similarly, the second term of \eqref{eq:correlation_B_1} equals to
  \begin{equation*}
    \frac{1}{2H} (s^{2H} - (s-v)^{2H})
    + \frac{t-s}{2} (s^{2H-1} - (s-v)^{2H-1}) + O((s-v)^{2H-2} (t-s)^2).
  \end{equation*}
  Therefore, $\expect[B^{(1)}_s B^{(1)}_t]$ equals to
  \begin{equation*}
    c_H s^{2H} + H c_H s^{2H-1} (t-s) -\frac{1}{2H} (v-s)^{2H}
    -\frac{1}{2} (s-v)^{2H-1} (t-s) +
    O((s-v)^{2H-2} (t - s)^2).
  \end{equation*}
  Since
  \begin{multline*}
    \frac{c_H}{2} (s^{2H} + t^{2H} - \abs{t-s}^{2H}) -
    c_H s^{2H} + H c_H s^{2H-1} (t-s) -\frac{1}{2H} (v-s)^{2H} \\
    = - \frac{c_H}{2} (t-s)^{2H} + O((s-v)^{2H-2} (t-s)^2),
  \end{multline*}
  the proof is complete.
\section{Yamada-Watanabe theorem for fractional SDEs}
We consider a Young differential equation
\begin{equation}\label{eq:young_fbm_appendix}
  \dd X_t = b(X_t) \dd t + \sigma(X_t) \dd B_t, \quad X_0 = x,
\end{equation}
where $b \in L_{\infty}(\R^d,\R^d)$ and
$B$ is an $(\cF_t)_{t \in \R}$ fractional Brownian motion with Hurst parameter $H \in (\frac{1}{2}, 1)$.
We fix $\alpha \in (\frac{1}{2}, H)$ and we assume that $\sigma \in C^{\frac{1-\alpha}{\alpha}}(\R^d; \cM_d)$ so that the integral
\begin{equation*}
  \int_s^t \sigma(X_r) \dd B_r
\end{equation*}
is interpreted as a Young integral.
\begin{definition}\label{def:}
  We say that a quintuple $(\Omega, (\cF_t)_{t \in \R}, \P, B, X)$ is a \emph{weak solution}
  to \eqref{eq:young_fbm_appendix} if $(B, X)$ are random variables defined on the filtered probability space $(\Omega, (\cF_t), \R)$,
  if $B$ is an $(\cF_t)$-fractional
  Brownian motion, if $X \in C^{\alpha}([0, T])$ is adapted to $(\cF_t)$ and if $X$ solves the Young differential equation \eqref{eq:young_fbm_appendix}.
  Given a filtered probability space $(\Omega, (\cF_t)_{t \in \R}, \P)$ and an $(\cF_t)$-fractional
  Brownian motion $B$, we say that a $C^{\alpha}([0,T])$-valued random variable $X$ defined on $(\Omega, (\cF_t)_{t \in \R}, \P)$ is a
  \emph{strong solution} if it solves \eqref{eq:young_fbm_appendix} and if it is adapted to the natural filtration generated by $B$.
  We say that the \emph{pathwise uniqueness} holds for \eqref{eq:young_fbm_appendix} if, for any two adapted
  $C^{\alpha}([0, T])$-valued random process $X$ and $Y$
  defined on a common filtered probability space that solve \eqref{eq:young_fbm_appendix} driven by a common $(\cF_t)$-Brownian motion, we have
  $X = Y$ almost surely.
\end{definition}
We will prove a Yamada-Watanabe type theorem for \eqref{eq:young_fbm_appendix} based on Kurtz~\cite{kurtz07}.
To this end, we recall that an $(\cF_t)$-fractional Brownian motion has the representation \eqref{eq:fbm_def}, and we view
\eqref{eq:young_fbm_appendix} as an equation of $X$ and the Brownian motion $W$.
\begin{proposition}\label{prop:yamada_watanabe}
  Suppose that a weak solution to \eqref{eq:young_fbm_appendix} exists and that the pathwise uniqueness holds for \eqref{eq:young_fbm_appendix}.
  Then, there exists a strong solution to \eqref{eq:young_fbm_appendix}.
\end{proposition}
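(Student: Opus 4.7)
The plan is to apply the abstract Yamada-Watanabe theorem of Kurtz~\cite{kurtz07}, which reduces the existence of a strong solution to two inputs: weak existence (given by assumption) and pathwise uniqueness (also given). The entire task therefore boils down to checking that our Young differential equation \eqref{eq:young_fbm_appendix} fits into Kurtz's abstract framework of ``stochastic equations with constraints'' and that the constraint is compatible with the natural filtration.

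The first step is to re-express \eqref{eq:young_fbm_appendix} as a measurable relation between an input $(\bW, B(0))$, where $\bW = (W_t)_{t \in \R}$ is the underlying two-sided Brownian motion appearing in \eqref{eq:fbm_def}, and an output $X \in C^{\alpha}([0,T], \R^d)$. Since $B$ is a deterministic functional of $(\bW, B(0))$ via the Mandelbrot-Van Ness representation, and since the Young integral $\int_0^{\cdot} \sigma(X_r) \dd B_r$ is a continuous map on $C^{\alpha}([0,T], \R^d) \times C^{\alpha}([0,T], \R^d)$ under our assumption $\sigma \in C^{(1-\alpha)/\alpha}$ with $\alpha > 1/2$, the set of triples $(\bW, B(0), X)$ for which $X$ solves \eqref{eq:young_fbm_appendix} is a Borel subset of the Polish product space, and membership in this set depends only on the time-$t$ restriction of the data and the path.

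Next I would run the standard coupling argument in Kurtz's language. Starting from a weak solution with joint law $\mu$ of $(\bW, B(0), X)$, disintegrate $\mu$ along the marginal on $(\bW, B(0))$ to obtain a regular conditional distribution $Q(\dd x \mid \bw, y)$. On a suitable enlargement of the probability space, build two processes $X^{(1)}, X^{(2)}$ that are conditionally independent given $(\bW, B(0))$ and each distributed as $Q$. Filtration compatibility ensures that both $X^{(i)}$ are $(\cF_t)$-adapted solutions of \eqref{eq:young_fbm_appendix} driven by the common fractional Brownian motion $B$ reconstructed from $(\bW, B(0))$. The pathwise uniqueness hypothesis then forces $X^{(1)} = X^{(2)}$ almost surely, so $Q(\cdot \mid \bw, y) = \delta_{F(\bw, y)}$ for some measurable map $F : C(\R, \R^d) \times \R^d \to C^{\alpha}([0,T], \R^d)$ that respects the forward filtration.

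Finally, on any filtered probability space carrying an $(\cF_t)$-fractional Brownian motion $B$ (with its canonical decomposition into $(\bW, B(0))$), set $X \defby F(\bW, B(0))$. By the adaptedness of $F$ the process $X$ is adapted to the natural filtration generated by $\bW$ (together with $B(0)$, which is $\cF_0$-measurable), and by construction it satisfies \eqref{eq:young_fbm_appendix} almost surely; thus it is a strong solution. The only nontrivial technical point is to verify the measurability/filtration-compatibility of the solution map and of the constraint set, which follows from the continuity of the Young integral on the relevant Hölder spaces; everything else is a routine instantiation of Kurtz's general result.
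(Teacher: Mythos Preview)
Your proposal is correct and takes essentially the same approach as the paper: both instantiate Kurtz's abstract Yamada--Watanabe theorem \cite{kurtz07}, checking that weak existence and pathwise uniqueness supply the needed inputs once the equation is rewritten as a measurable, filtration-compatible constraint between the underlying two-sided Brownian motion and the solution path. The paper is slightly more careful about specifying a Polish state space for the two-sided Brownian motion (with decay conditions at $-\infty$ so that the improper integral $\int_{-\infty}^t K_H(t,r)\,\dd w_r$ makes sense) and omits $B(0)$ since only increments of $B$ enter \eqref{eq:young_fbm_appendix}; otherwise the arguments coincide.
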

\begin{proof}
  We would like to apply \cite[Theorem~3.14]{kurtz07}. For this purpose, we need a setup. We follow the notation in \cite{kurtz07}.
  We fix $\beta > 0$ that is less than but sufficiently close to $\frac{1}{2}$.
  As before we set $K_H(t, r) \defby (t-r)_+^{H - \frac{1}{2}} - (-r)_+^{H - \frac{1}{2}}$ and we set $S_1 \defby C^{\alpha}([0, T])$ and define
  $S_2$ as a subspace of
  \begin{equation*}
    \set{w \in C^{\beta}(\R) \given \lim_{r \to -\infty} \abs{w_r} (-r)^{H - \frac{3}{2}} = 0, \,\, \int_{-\infty}^{-1} \abs{w(r)} (-r)^{H - \frac{3}{2}} \dd r < \infty}
  \end{equation*}
  that is Polish and the Brownian motion lives in $S_2$.
  We note that for $w \in S_2$, the improper integral
  \begin{equation*}
    \int_{-\infty}^t K_H(t, r) \dd w_r = \lim_{M \to \infty} \int_{-M}^t K_H(t, r) \dd w_r
  \end{equation*}
  is well-defined.
  For $t \in [0, T]$, we denote by $(\cB^{S_1}_t)_{t \in [0, T]}$ and $(\cB^{S_2}_t)_{t \in [0, T]}$ the filtration generated by
  the coordinate maps in $S_1$ and $S_2$ respectively. We set
  \begin{equation*}
    \cC \defby \set{(\cB^{S_1}_t, \cB^{S_2}_t) \given t \in [0, T] }
  \end{equation*}
  as our compatibility structure in the sense of \cite[Definition~3.4]{kurtz07}.
  We denote by $\cS_{\Gamma, \cC, W}$ the set of probability measures $\mu$ on $S_1 \times S_2$ such that
  \begin{itemize}
    \item we have
    \begin{equation*}
      \mu(\set{(x,y) \in S_1 \times S_2 \given x_t = x + \int_0^t b(x_r) \dd r + \int_0^t \sigma(x_r) \dd Iy_r\,\, \text{for all }t \in [0,T]} ) = 1,
    \end{equation*}
    where $(Iy)_t \defby \int_{-\infty}^t K_H(t, r) \dd y_r$ ;
    \item $\mu$ is $\cC$-compatible in the sense of \cite[Definition~3.6]{kurtz07};
    \item $\mu(S_1 \times \cdot)$ has the law of the Brownian motion.
  \end{itemize}
  By \cite[Lemma~3.8]{kurtz07}, $\cS_{\Gamma, \cC, W}$ is convex. In view of \cite[Lemma~3.2]{kurtz07},
  the existence of weak solutions implies $\cS_{\Gamma, \cC, W}
  \neq \emptyset$.

  Therefore, to apply \cite[Theorem~3.14]{kurtz07}, it remains to prove the pointwise uniqueness in the sense of \cite[Definition~3.12]{kurtz07}.
  Suppose that $(X_1, X_2, W)$ are defined on a common probability space, that the laws of $(X_1, Y)$ and $(X_2, Y)$ belong to $\cS_{\Gamma, \cC, W}$
  and that $(X_1, X_2)$ are jointly compatible with $W$ in the sense of \cite[Definition~3.12]{kurtz07}.
  But then, if we denote by $(\cF_t)$ the filtration generated by $(X_1, X_2, W)$,
  by \cite[Lemma~3.2]{kurtz07}, the joint compatibility implies that $W$ is an $(\cF_t)$-Brownian motion,
  and therefore the pathwise uniqueness implies $X_1 = X_2$ almost surely.

  Hence, by \cite[Theorem~3.14]{kurtz07}, there exists a measurable map $F: S_2 \to S_1$ such that for a Brownian motion $W$,
  the law of $(F(W), W)$ belongs to $\cS_{\Gamma, \cC, W}$. Then, \cite[Lemma~3.11]{kurtz07} implies that $F(W)$ is a strong solution.
\end{proof}
\begin{lemma}\label{lem:weak_solution}
  Let $b \in C_b^1(\R^d)$ and $\sigma \in C^1_b(\R^d)$. Then, there exists a weak solution to \eqref{eq:young_fbm_appendix}.
\end{lemma}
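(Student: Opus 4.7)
The approach will be the classical compactness/Skorokhod strategy: approximate $b$ and $\sigma$ by smooth coefficients, solve the approximating SDEs using Proposition~\ref{prop:uniqueness_young}, use the a priori estimate~\eqref{eq:X_a_priori} to obtain tightness, extract an a.s. convergent subsequence on a new probability space, and identify the limit via continuity of Young integration.

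First I would choose mollifications $b_n, \sigma_n \in C^\infty_b$ with $b_n \to b$ and $\sigma_n \to \sigma$ in $C^1_{\mathrm{loc}}$ and with $\sup_n(\|b_n\|_{C^1_b} + \|\sigma_n\|_{C^1_b}) < \infty$. Because each $\sigma_n$ lies in $C^{1+\delta}$ for every $\delta > 0$, Proposition~\ref{prop:uniqueness_young} combined with standard Picard iteration for smooth coefficients yields a unique strong solution $X^n$ to the SDE with coefficients $(b_n, \sigma_n)$ driven by the given $(\cF_t)$-fBm $B$ (taking $B(0) = 0$). The a priori bound~\eqref{eq:X_a_priori} then gives
\begin{equation*}
\|X^n\|_{C^\alpha([0,T])} \lesssim_{T, \alpha} (1 + |x| + \|B\|_{C^\alpha})^2, \quad \alpha \in (\tfrac12, H),
\end{equation*}
uniformly in $n$.

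Next, for $\alpha' \in (\tfrac12, \alpha)$ the compact embedding $C^\alpha \hookrightarrow C^{\alpha'}$ together with the uniform bound yields tightness of the joint laws of $(X^n, W|_{[-N, T]})$ in $C^{\alpha'}([0,T]) \times C([-N, T])$ for every $N$. A diagonal argument and Skorokhod's representation theorem produce a new probability space $(\tilde\Omega, \tilde\cF, \tilde\P)$ and processes $(\tilde X^n, \tilde W^n)$ with the same joint laws as $(X^n, W)$, converging a.s. to some $(\tilde X, \tilde W)$ in $C^{\alpha'}([0,T]) \times C_{\mathrm{loc}}(\R)$. Since each $\tilde W^n$ is distributed as a two-sided Brownian motion, so is $\tilde W$; and setting $\tilde B^n_t \defby \int_\R K_H(t, r) \dd \tilde W^n_r$ and $\tilde B_t \defby \int_\R K_H(t, r) \dd \tilde W_r$, the tail decay of $K_H$ together with the a.s. convergence of $\tilde W^n$ gives $\tilde B^n \to \tilde B$ in $C^{\alpha'}([0,T])$ a.s. Equidistribution forces $\tilde X^n$ to solve the SDE with coefficients $(b_n, \sigma_n)$ and driver $\tilde B^n$; continuity of the Young integration map $(f, g) \mapsto \int_0^\cdot f \dd g$ on $C^{\alpha'} \times C^{\alpha'}$ (valid since $2 \alpha' > 1$), combined with $b_n(\tilde X^n) \to b(\tilde X)$ and $\sigma_n(\tilde X^n) \to \sigma(\tilde X)$ in $C^{\alpha'}$, lets one pass to the limit and conclude that $\tilde X$ solves the target equation with driver $\tilde B$.

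The hard part will be the filtration. I plan to take $(\tilde\cF_t)$ to be the usual augmentation of $\sigma(\tilde X_s, \tilde W_r : s \in [0, t], r \leq t)$, so that adaptedness of $\tilde X$ (and hence of $\tilde B$) is automatic. The genuine task is to verify that $\tilde W$ is still an $(\tilde\cF_t)$-Brownian motion, i.e. $\tilde W_t - \tilde W_s \perp \tilde\cF_s$. This will be inherited from the analogous pre-limit independence -- namely that $\tilde W^n_t - \tilde W^n_s$ is independent of $\sigma(\tilde X^n_r, \tilde W^n_u : r \leq s, u \leq s)$ because $X^n$ is a strong solution -- by testing characteristic functions of $\tilde W^n_t - \tilde W^n_s$ against bounded continuous functionals of the paths up to time $s$ and using the a.s. convergence in $C^{\alpha'}$ to pass to the limit. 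Once this is done, the quintuple $(\tilde\Omega, \tilde\cF_t, \tilde\P, \tilde B, \tilde X)$ is the required weak solution.
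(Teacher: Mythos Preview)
Your overall strategy---smooth approximation of the coefficients, a priori H\"older bounds via \eqref{eq:X_a_priori}, tightness, and passage to the limit---is exactly the paper's, with only cosmetic differences: the paper mollifies $\sigma$ alone (since $b\in C^1_b$ already suffices for Proposition~\ref{prop:uniqueness_young}) and identifies the limit via a Portmanteau argument on an open set rather than through Skorokhod representation. Your filtration argument is likewise the same as the paper's.

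There is, however, one real gap. You assert that a.s.\ convergence $\tilde W^n\to\tilde W$ in $C_{\mathrm{loc}}(\R)$, combined with the tail decay of $K_H$, yields $\tilde B^n\to\tilde B$ in $C^{\alpha'}([0,T])$. This step does not follow as written: the Mandelbrot--Van~Ness integral extends to $-\infty$, and convergence in $C_{\mathrm{loc}}$ gives no uniform-in-$n$ control of $|\tilde W^n_r|$ as $r\to-\infty$, so you cannot justify interchanging the limit with the improper integral (equivalently, dominated convergence in the integrated-by-parts representation is unavailable). The paper closes this gap by taking tightness not in $C_{\mathrm{loc}}(\R)$ but in a weighted Polish space $S$ of paths $w\in C^{\beta}(\R)$ satisfying $\lim_{r\to-\infty}|w_r|(-r)^{H-3/2}=0$ and $\int_{-\infty}^{-1}|w(r)|(-r)^{H-3/2}\,dr<\infty$; on $S$ the map $w\mapsto Iw:=\int_{-\infty}^{\cdot}K_H(\cdot,r)\,dw_r$ is continuous into $C^{H-\epsilon}([0,T])$, which is exactly what makes the set $A_\delta$ open and the limiting identification go through. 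An equally valid fix within your Skorokhod framework is simply to include $B$ itself (not just $W$) as a coordinate in the tuple: since $B^n=B$ for every $n$, tightness of $B$ in $C^{\alpha'}$ is free, after extraction you get $\tilde B^n\to\tilde B$ in $C^{\alpha'}$ directly, and the relation $\tilde B_t=\int_{\R}K_H(t,r)\,d\tilde W_r$ is inherited in law from the pre-limit pairs $(\tilde B^n,\tilde W^n)\stackrel{d}{=}(B,W)$.
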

\begin{proof}
  Let $(\sigma^n)_{n \in \N}$ be a smooth approximation to $\sigma$ and let $X^n$ be the solution to
  \begin{equation*}
    X^n_t = x + \int_0^t b(X^n_r) \dd r + \int_0^t \sigma^n(X^n_r) \dd B_r.
  \end{equation*}
  Let $W$ be the Brownian motion such that $B_t = \int_{-\infty}^t K_H(t, r) \dd W_r$.
  Let $\epsilon$ be greater than but sufficiently close to $0$ and let $S$ be
  a subspace of
  \begin{equation*}
    \set{w \in C^{\frac{1}{2} - \epsilon}(\R) \given \lim_{r \to -\infty} \abs{w_r} (-r)^{H - \frac{3}{2}} = 0, \,\, \int_{-\infty}^{-1} \abs{w(r)} (-r)^{H - \frac{3}{2}} \dd r < \infty}
  \end{equation*}
  that is Polish and where the Brownian motion lives.
  By the a priori estimate \eqref{eq:X_a_priori}, we see that a sequence of the laws of $(X^n, W)$ is tight in $C^{H - \epsilon}([0, T]) \times S$.
  Thus, replacing it with a subsequence, we suppose that the sequence $(X^n, W)$ converge to some limit $(\tilde{X}, \tilde{W})$ in law.

  To see that $(\tilde{X}, \tilde{W})$ solves \eqref{eq:young_fbm},
  we write $I w_t \defby \int_{-\infty}^t K_H(t, r) \dd w_r$ and for $\delta>0$ we set
  \begin{equation*}
    A_{\delta} \defby
    \set{(y, w) \in C^{H-\epsilon}([0, T]) \times S \given \sup_{t \in [0, T]} \abs{y_t - x - \int_0^t b(y_r) \dd r - \int_0^t \sigma(y_r) \dd (Iw)_r} > \delta}.
  \end{equation*}
  Then, we have
  \begin{align*}
    \P((\tilde{X}, \tilde{W}) \in A_{\delta})
    &\leq \liminf_{n \to \infty} \P((X^n, W) \in A_{\delta}) \\
    &\leq \liminf_{n \to \infty} \P(\sup_{t \in [0, T]} \abs{\int_0^t \{\sigma - \sigma^n\} (X^n_r) \dd B_r} > \delta).
  \end{align*}
  However, by the estimate of Young's integral,
  \begin{equation*}
    \abs{\int_0^t \{\sigma - \sigma^n\} (X^n_r) \dd B_r}
    \lesssim_{H, \epsilon} \norm{\sigma - \sigma^n}_{C^1_b} \norm{X^n}_{C^1_b} \norm{B}_{C^1_b}.
  \end{equation*}
  Thus, combined with the a priori estimate \eqref{eq:X_a_priori}, we observe
  \begin{equation*}
    \liminf_{n \to \infty} \P(\sup_{t \in [0, T]} \abs{\int_0^t \{\sigma - \sigma^n\} (X^n_r) \dd B_r} > \delta)
    = 0
  \end{equation*}
  and hence $\P((\tilde{X}, \tilde{W}) \in A_{\delta}) = 0$.
  Since $\delta$ is arbitrary, this implies
  \begin{equation*}
    \P(\tilde{X}_t = x + \int_0^t b(\tilde{X}_r) \dd r + \int_0^t \sigma(\tilde{X}_r) \dd (I \tilde{W})_r
    \,\, \forall t \in [0, T]) = 1.
  \end{equation*}

  Finally, since $W_t - W_s$ is independent of the $\sigma$-algebra generated by
  $(X^n_r)_{r \leq s}$ and $(W_r)_{r \leq s}$, we know that $\tilde{W}_t - \tilde{W}_s$ is independent of
  \begin{equation*}
    \tilde{\cF}_s \defby \sigma(\tilde{X}_r, \tilde{W}_r \vert r \leq s),
  \end{equation*}
  or equivalently $\tilde{W}$ is $(\tilde{\cF}_t)$-Brownian motion.
\end{proof}
\printbibliography[heading=bibintoc]
\end{document}